\documentclass[11pt]{amsart}
\usepackage[a4paper,hmargin={2.4cm,2.4cm},vmargin={2.4cm,2.4cm}]{geometry}
\usepackage[utf8]{inputenc}
\usepackage[T1]{fontenc}
\usepackage{euscript,
mathrsfs,color,latexsym,marginnote}
\usepackage{graphicx}
\usepackage{amsmath,amssymb,amsthm,amsfonts,mathabx}
\usepackage{amssymb}
\usepackage[active]{srcltx}
\usepackage{geometry}
\usepackage{hyperref}
\usepackage{comment}
\usepackage{color}
\newtheorem{thm}{Theorem}[section]

\newtheorem{lem}[thm]{Lemma}
\newtheorem{prop}[thm]{Proposition}
\newtheorem{rem}[thm]{Remark}

\newcommand{\R}{\mathbb R}

\newcommand{\N}{\mathbb N}

\newcommand{\D}{\hbox{D}}

\begin{document}
	
	\title[Local bounds for nonlinear higher-order vector]{Local bounds for nonlinear higher-order vector fields for the p-Laplace equation}
	
	\author{Felice Iandoli, Giuseppe Spadaro, Domenico Vuono}

	\email[Felice Iandoli]{felice.iandoli@unical.it}
    \email[Giuseppe Spadaro]{giuseppe.spadaro@unical.it}
	\email[Domenico Vuono]{domenico.vuono@unical.it}
	\address{Dipartimento di Matematica e Informatica, Università della Calabria,
		Ponte Pietro Bucci 31B, 87036 Arcavacata di Rende, Cosenza, Italy}
	

	\keywords{$p$-Laplacian, regularity, Calderòn-Zygmund estimates, Moser's iteration}
	
	\subjclass[2020]{35J60, 35B65}

	\begin{abstract}
		We study higher regularity for weak solutions of the $p$-Laplace equation $-\Delta_p u = f$ in a domain $\Omega \subset \mathbb{R}^n$ for $p$ sufficiently close to 2. For $m \ge 3$, assuming that $f$ satisfies suitable Sobolev and H\"older regularity conditions, we prove that the nonlinear quantity $|\nabla u|^{m-2}\nabla u$ belongs to $W^{m-1,q}_{{loc}}(\Omega)$,  and that $|\nabla u|^{m-2} D^2u$ belongs to $W^{m-2,q}_{{loc}}(\Omega)$, for any $q\ge 2$. Furthermore, we obtain uniform $L^\infty$ bounds for the weighted $(m-1)$-th derivatives of $|\nabla u|^{m-2}\nabla u$ and the weighted $(m-2)$-th derivatives of $|\nabla u|^{m-2} D^2u$, providing quantitative control even near critical points of $\nabla u$.
	\end{abstract}

 	\maketitle
  
	\section{Introduction}
	
	\noindent This paper is concerned  with the regularity of higher order derivatives  of weak solutions to:
	\begin{equation} \label{eq:problema}
		- \Delta_{p} u = f(x)  \quad \text{in } \Omega\,,
	\end{equation}
	where $\Delta_{p} u := - \operatorname{div} (|\nabla u|^{p-2} \nabla u)$, for $p>1$, is the $p$-Laplace operator and $\Omega $ is a domain of $\R^{n}$ with $n \ge 2$. 
	For $u\in W^{1,p}_{loc}(\Omega)$, the weak formulation of \eqref{eq:problema} is the following
	\begin{equation}\label{soluzionedebole}
		\int_\Omega |\nabla u|^{p-2}(\nabla u,\nabla \psi)\,dx=\int_\Omega f \psi \,dx \quad \forall \, \psi\in C^{\infty}_c(\Omega).
	\end{equation}
    
	\noindent We have two main results, in the first one we provide $L^{\infty}$-type bounds for the derivatives of order $m$ of the vector field $|\nabla u|^{m-2}\nabla u$.
	 \begin{thm}\label{INFINITO}
	    Let $\Omega$ be a domain of $\R^n$, with $n\geq 2$. Let $\boldsymbol{\alpha}$ be a multi-index of order $m-1$, with $m\ge 3$. Let $u$ be a weak solution of~\eqref{eq:problema}. Assume that $f\in  W^{m,l}_{loc}(\Omega)$, with $l>n/2$. For any $k>0$, there exists  $\mathfrak{C}:=\mathfrak{C}(k,l,n,m)>0$ small enough such that if $|p-2|<\mathfrak{C}$, then  
     $$|\nabla u|^kD^{\boldsymbol{\alpha}} \left(|\nabla u|^{m-2}\nabla u\right)\in L^\infty_{loc}(\Omega)\quad\text{and}\quad|\nabla u|^kD^{\boldsymbol{\alpha}-1} \left(|\nabla u|^{m-2}D^2u\right)\in L^\infty_{loc}(\Omega).$$
     \end{thm}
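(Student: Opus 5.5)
Here is the plan I would follow. First, regularize in the standard way. Replace the equation by
\[
-\operatorname{div}\bigl((\varepsilon+|\nabla u_\varepsilon|^2)^{\frac{p-2}{2}}\nabla u_\varepsilon\bigr)=f_\varepsilon
\]
on a ball $B\Subset\Omega$, with $u_\varepsilon=u$ on $\partial B$ and $f_\varepsilon$ smooth approximations of $f$ in $W^{m,l}$. The solutions $u_\varepsilon$ are smooth, and $u_\varepsilon\to u$ in $C^{1,\beta}_{loc}$. Every estimate below will be proved for $u_\varepsilon$ with constants independent of $\varepsilon$, and then passed to the limit. In the limit the weights $|\nabla u_\varepsilon|^k$ and $(\varepsilon+|\nabla u_\varepsilon|^2)^{(m-2)/2}$ converge locally uniformly, which identifies the limit objects.

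Second, I would differentiate the regularized equation $m-1$ times. For a multi-index $\boldsymbol\beta$ with $|\boldsymbol\beta|\le m-1$, set $w=D^{\boldsymbol\beta}u_{\varepsilon,i}$. Then $w$ satisfies a linear equation
\[
-\operatorname{div}\bigl(A_\varepsilon(\nabla u_\varepsilon)\nabla w\bigr)=D^{\boldsymbol\beta}\partial_i f_\varepsilon+\operatorname{div}(\text{lower order terms}).
\]
Here $A_\varepsilon=(\varepsilon+|\nabla u_\varepsilon|^2)^{\frac{p-2}{2}}\bigl(I+(p-2)\frac{\nabla u_\varepsilon\otimes\nabla u_\varepsilon}{\varepsilon+|\nabla u_\varepsilon|^2}\bigr)$. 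The lower order terms are products of derivatives of $\nabla u_\varepsilon$ of order at most $m-1$. Each of them carries a factor $(p-2)$ times negative powers of $(\varepsilon+|\nabla u_\varepsilon|^2)^{1/2}$, which arise from differentiating the coefficient. The point of the weight $|\nabla u|^{m-2}$ is exactly this. Expanding $D^{\boldsymbol\alpha}(|\nabla u|^{m-2}\nabla u)$ by Leibniz and Fa\`a di Bruno, the top term is $|\nabla u|^{m-2}D^{\boldsymbol\alpha}\nabla u$. Every other term is a product $|\nabla u|^{m-2-j}\prod D^{\gamma_s}\nabla u$ in which $j$ derivatives fall on the modulus. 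So it suffices to bound the quantities
\[
\Phi_j:=|\nabla u_\varepsilon|^{k+j-1}\,|D^{j}\nabla u_\varepsilon|,\qquad 1\le j\le m-1,
\]
in $L^\infty$, and suitable products of them, uniformly in $\varepsilon$. The homogeneity count makes each term of the expansion bounded by such products.

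Third, I would get the $L^\infty$ bounds by Moser iteration, proceeding by induction on $j$. I test the differentiated equation with $\varphi=\eta^2|\nabla u_\varepsilon|^{2\sigma}|w|^{2\gamma}w$, using large $\gamma$ and weights $\sigma$ tied to $k$. When $p=2$ the equation has constant coefficients, and everything closes by elliptic regularity and Sobolev embedding. The $f$-term is handled using $f\in W^{m,l}$ with $l>n/2$, which is exactly the integrability needed for a Moser iteration with a right-hand side in $L^{l}$, $l>n/2$. For $p\ne 2$, the bad terms are those in which derivatives hit the weight $|\nabla u|^{2\sigma}$ or the coefficient $A_\varepsilon$. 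They produce negative powers of $|\nabla u_\varepsilon|$, always with a factor $|p-2|$ or a factor proportional to $\sigma$. I would absorb the $|p-2|$ terms into the left-hand side. This is what makes $\mathfrak C$ depend on $k$, $m$, $l$ and $n$: at each iteration step, the absorption constant must beat a constant polynomial in the exponents. The lower-order products are controlled by the induction hypothesis together with the known $L^q$ estimates. Those estimates are the Calder\'on--Zygmund estimates used in the paper for $|\nabla u|^{m-2}\nabla u\in W^{m-1,q}$, and I would prove them first in the same perturbative way.

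The main obstacle is the Moser iteration step itself. The number of steps is infinite, so one cannot shrink $|p-2|$ at each step. One must therefore choose the test-function exponents so that the $|p-2|$-error constant stays uniformly controlled, or grows slowly enough to be compensated by the gain from Sobolev. I would first try to cap the iteration: use the uniform $W^{m-1,q}$ bound for one large finite $q$, chosen depending on $n$ and $l$, so that only finitely many iterations with smallness are needed. After that, I would conclude with a standard De Giorgi / Moser step, in which the $p$-dependent perturbation enters only through a fixed bounded constant.
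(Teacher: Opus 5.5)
There is a genuine gap in your Moser step, and the obstacle you single out is not the real one. Write $\rho_\varepsilon:=\varepsilon+|\nabla u_\varepsilon|^2$ and $G:=\rho_\varepsilon^{\frac{k+j-1}{2}}|w|$.

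To get a weighted $L^\infty$ bound for $G$, the weight in your test function must scale with the iteration exponent: $\sigma\approx(k+j-1)(\gamma+1)$. The derivative falling on the weight then produces a term of size
\[
\sigma\int\rho_\varepsilon^{\sigma-\frac12}|D^2u_\varepsilon|\,|\nabla w|\,|w|^{2\gamma+1}\eta^2\,dx
\]
(up to the power of $\rho_\varepsilon$ coming from $A_\varepsilon$, which one compensates in the test function). Its coefficient is of order $\gamma$ and carries no factor $p-2$.

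Young's inequality with a fixed small parameter absorbs the $\nabla w$ part into the coercive term. Both sides are $O(\gamma)$, so no smallness of $p-2$ is needed for that. But it leaves
\[
C\gamma\int \rho_\varepsilon^{-1}|D^2u_\varepsilon|^2\,G^{2(\gamma+1)}\eta^2\,dx ,
\]
and the same term reappears when you compute $\|\nabla (G^{\gamma+1})\|_{L^2}$ for the Sobolev step. This is neither absorbable nor a ``lower-order product''. It contains the full top power of $G$ times a factor that is unbounded near $Z_u$, uniformly in $\varepsilon$. The induction hypothesis $\rho_\varepsilon^{k/2}|D^2u_\varepsilon|\in L^\infty$ does not make $\rho_\varepsilon^{-1}|D^2u_\varepsilon|^2$ bounded. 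The $(p-2)$-terms coming from derivatives of $A_\varepsilon$ leave remainders of exactly the same type after Young, so ``absorbing the $|p-2|$ terms'' does not dispose of them either. Your plan never says how the extra weight $k>0$ is actually spent, and this is exactly where it has to be spent.

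The missing idea is the shift by $\hat s=1/k$ used in Proposition~\ref{dinodino}. Since $k\hat s=1$,
\[
G^{2\hat s}=\rho_\varepsilon\,\bigl[\rho_\varepsilon^{\frac{j-1}{2}}|w|\bigr]^{2\hat s}.
\]
So sacrificing $2\hat s$ powers of $G$ produces exactly one power of $\rho_\varepsilon$, which cancels the singular factor. The leftover $\bigl[\rho_\varepsilon^{\frac{j-1}{2}}|w|\bigr]^{2\hat s}|D^2u_\varepsilon|^2$ involves only the weight $\rho_\varepsilon^{(j-1)/2}$ available from the $L^q$ theory. It is put in $L^{\zeta/(\zeta-2)}$ by H\"older and bounded by the weighted Calder\'on--Zygmund estimates of Lemma~\ref{sisso}. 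The same treatment applies to the Fa\`a di Bruno remainders and to the source term, the latter via $l>n/2$ and $\zeta>2l/(l-1)$. This gives
\[
\|G\|^{q}_{L^{2^*q}(B_{h'})}\le \frac{Cq}{h-h'}\|G\|^{q-\hat s}_{L^{\zeta(q-\hat s)}(B_h)}
\]
with $C$ independent of $q$, and a Moser iteration with shift concludes.

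Consequently the smallness of $|p-2|$ enters only through Lemma~\ref{sisso}, applied at finitely many fixed exponents depending on $k,\zeta,l,n,m$. That, and not a per-step absorption, is why $\mathfrak{C}$ depends on $k$. The ``infinitely many steps'' difficulty you worry about never arises.

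Your proposed cure, capping with one $W^{m-1,q}$ bound and then running a ``standard De Giorgi/Moser step'', does not work as stated. The equation for $w$ is degenerate or singular uniformly in $\varepsilon$, and $w$ itself is unbounded in general (Remark~\ref{rem1}). So any final step must be a weighted one, and it runs into precisely the singular term above.
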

\begin{rem}\label{rem1}
The following example shows that the presence of the weight $|\nabla u|^k$ is necessary in order to establish Theorem \ref{INFINITO}.
Consider  
$u(x_1,\ldots,x_n):=|x_1|^{p'}/p'$,  $p'=\frac{p}{p-1}$. This function satisfies $\Delta_p u=1$ in $\mathbb{R}^n$, for any $n\geq 1$. 
An easy computation shows that $|\partial_{x_1^{m-1}}(|\partial_{x_1} u|^{m-2}\partial_{x_1}u)|\approx 
|x_1|^{\frac{2-p}{p-1}(m-1)},$ which of course is not in $L^{\infty}(\Omega)$, for any $\Omega$   open set such that $\Omega\cap \{x_1=0\}\neq \emptyset$ and for any $p>2$.
On the other hand 
$|\nabla u|^k\partial_{x_1^{m-1}}(|\partial_{x_1} u|^{m-2}\partial_{x_1}u)$ is locally in  $L^{\infty}$ if $p\le 2+\frac{k}{m-1}$. Note that in this example, when $k$ goes to zero, we obtain $p\leq 2$. The exact dependence between $p$ and $k$ is unknown in general, as it relies on a non-explicit constant in the Calderón–Zygmund inequality.
\end{rem}
\noindent In the second main result we prove  that the vector field  $|\nabla u|^{m-2}\nabla u$ belongs to Sobolev space $W^{m-1,q}_{loc}$ for any $q>1$. First, we define the following space:
\begin{equation}\label{S_loc}
        S_{loc}(\Omega):=\begin{cases}
            W^{m-3,s}_{loc}(\Omega),\ \forall s \geq 1 & \text{ if $m \geq 4$}\\
            C^{0,\beta}_{loc}(\Omega), \ \ \ \quad \beta<1 & \text{ if $m = 3$}.
        \end{cases}
    \end{equation}
	\begin{thm}\label{teoprinc}
    Let $\Omega$ be a domain of $\mathbb{R}^n$ and $u$ be a weak solution of \eqref{eq:problema}. We assume that $f \in W^{m-2,q}_{loc}(\Omega)\cap S_{loc}(\Omega)$, with $m \geq 3$, $q\geq 2$ and $S_{loc}(\Omega)$ defined in \eqref{S_loc}.
    There exists $\mathcal{C}:=\mathcal{C}(n,m,q)>0$ small enough such that if $|p-2|<\mathcal{C}$, then
    \begin{equation*}
        |\nabla u|^{m-2}\nabla u \in W^{m-1,q}_{loc}(\Omega) \quad \text{and} \quad |\nabla u|^{m-2}D^2u \in W^{m-2,q}_{loc}(\Omega).
    \end{equation*}
    As a consequence, for any $\gamma \in (0,1)$, there exists $\widetilde{\mathcal{C}}$ such that if $|p-2|<\widetilde{\mathcal{C}}$, then $|\nabla u|^{m-2}\nabla u$ belongs to $C^{m-2,\gamma}_{loc}(\Omega)$ and $|\nabla u|^{m-2}D^2u$ belongs to  $C^{m-3,\gamma}_{loc}(\Omega)$.
\end{thm}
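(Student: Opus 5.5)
We prove Theorem~\ref{teoprinc} by regularizing the equation, deriving a priori estimates uniform in the regularization, and inducting on $m$.

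\textbf{Regularization.} Fix $\Omega'\Subset\Omega$ and a mollified datum $f_\varepsilon\to f$ in $W^{m-2,q}\cap S$ on $\Omega'$. Let $u_\varepsilon$ solve the nondegenerate problem
\[
-\operatorname{div}\big((\varepsilon+|\nabla u_\varepsilon|^2)^{\frac{p-2}{2}}\nabla u_\varepsilon\big)=f_\varepsilon
\]
in a ball $B\Subset\Omega'$, with $u_\varepsilon=u$ on $\partial B$. By standard theory $u_\varepsilon$ is smooth, $u_\varepsilon\to u$ in $C^{1,\alpha}_{loc}$, and $\|\nabla u_\varepsilon\|_{L^\infty}$ is bounded uniformly in $\varepsilon$. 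It therefore suffices to bound
\[
V_\varepsilon:=(\varepsilon+|\nabla u_\varepsilon|^2)^{\frac{m-2}{2}}\nabla u_\varepsilon \quad\text{in } W^{m-1,q},
\qquad
(\varepsilon+|\nabla u_\varepsilon|^2)^{\frac{m-2}{2}}D^2u_\varepsilon \quad\text{in } W^{m-2,q},
\]
uniformly in $\varepsilon$, and then pass to the limit by weak compactness.

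\textbf{Perturbation of the Laplacian.} Expand the equation as
\[
\Delta u_\varepsilon+(p-2)\frac{(D^2u_\varepsilon\nabla u_\varepsilon,\nabla u_\varepsilon)}{\varepsilon+|\nabla u_\varepsilon|^2}=-\frac{f_\varepsilon}{(\varepsilon+|\nabla u_\varepsilon|^2)^{\frac{p-2}{2}}}.
\]
Multiply by the weight $w=(\varepsilon+|\nabla u_\varepsilon|^2)^{\frac{m-2}{2}}$. Apply $D^{\boldsymbol\beta}$ with $|\boldsymbol\beta|=m-2$ and commute the weight through the second derivatives. The result is an equation of the form
\[
\Delta\big(D^{\boldsymbol\beta}W\big)=(p-2)\,\mathcal{A}\big(D^{m}W\big)+\mathcal{R}.
\]
Here $W$ is a suitable potential-type quantity whose gradient is comparable to $V_\varepsilon$. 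The operator $\mathcal A$ is bounded, with coefficients $\frac{\nabla u\otimes\nabla u}{\varepsilon+|\nabla u|^2}$ of norm at most $1$. The remainder $\mathcal R$ collects:
\begin{itemize}
\item lower-order products of weighted derivatives, which the induction hypothesis controls;
\item terms involving $D^{\boldsymbol\beta}\big(f_\varepsilon(\varepsilon+|\nabla u_\varepsilon|^2)^{\frac{m-p}{2}}\big)$.
\end{itemize}
The exponent $m$ in the weight is exactly what makes every derivative falling on $(\varepsilon+|\nabla u|^2)^{s}$ come with enough powers of $|\nabla u|$ to cancel its singular factors $|\nabla u|^{-1}$. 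For this we need $m-p>m-3$, i.e.\ $p$ close to $2$.

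\textbf{Calderón--Zygmund absorption.} Localize with a cutoff $\eta$ and apply the $L^q$ Calderón--Zygmund estimate for $\Delta$, with constant $C_{CZ}(n,q)$. This gives
\[
\|D^2(\eta D^{\boldsymbol\beta}W)\|_{L^q}\le C_{CZ}\Big(|p-2|\,C(n,m)\,\|D^2(\eta D^{\boldsymbol\beta}W)\|_{L^q}+\|\mathcal R\|_{L^q}+\text{cutoff terms}\Big).
\]
Choosing $\mathcal C(n,m,q)$ with $C_{CZ}\,C(n,m)\,\mathcal C<1/2$ lets us absorb the first term into the left-hand side.

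\textbf{Induction.} Run the argument by induction on $m$. The case $m=2$ is the known $W^{1,q}$ estimate for $|\nabla u|^{p-2}\nabla u$ and $D^2u$ when $p$ is near $2$; in fact it follows from the same absorption with $\boldsymbol\beta=0$. At step $m$, the products in $\mathcal R$ are estimated by Hölder's inequality, using the step-$(m-1)$ bounds with larger exponents. Since $q$ is arbitrary at the earlier steps, $\mathcal C$ may shrink at each step.

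The data terms need $f\in W^{m-2,q}$ for the top derivative, and the lower-order part of $f$ must multiply derivatives of the weight. For $m\ge4$ this is handled by $f\in W^{m-3,s}$ for all $s$ together with Hölder. For $m=3$ the term $f\,\nabla\big((\varepsilon+|\nabla u|^2)^{\frac{3-p}{2}}\big)$ contains $|\nabla u|^{2-p}D^2u$, which is in $L^q$ once $f$ is bounded; the assumption $f\in C^{0,\beta}$ supplies that boundedness and also the $C^{1,\alpha}$ convergence needed above.

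\textbf{Main obstacle.} The hardest step is the bookkeeping in $\mathcal R$: showing that every term, after distributing $D^{\boldsymbol\beta}$, has nonnegative net power of $|\nabla u|$ and involves only already-controlled weighted derivatives. To do this I would write each term as $\prod_j (\varepsilon+|\nabla u|^2)^{a_j/2}D^{k_j}u$ and verify $\sum_j a_j\ge \sum_j (k_j-2)$ by a combinatorial count.

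\textbf{Hölder consequence.} Take $q>n$. Morrey's embedding $W^{m-1,q}\hookrightarrow C^{m-2,1-n/q}$ gives any $\gamma<1$ by choosing $q$ large, with $\widetilde{\mathcal C}=\mathcal C(n,m,q(\gamma))$.
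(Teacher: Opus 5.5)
Your overall architecture is the paper's. You regularize, induct on $m$ from the base case of Theorem~\ref{teoremacalderon} (Remark~\ref{regolarizziamo}), and apply Lemma~\ref{Zygmund} to a weighted $(m-2)$-th derivative, absorbing the $(p-2)$ top-order term. You control products by H\"older with growing exponents, which is where $S_{loc}$ enters, then pass to weak limits and finish with Morrey.

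One object is misidentified. There is no potential $W$ with $\nabla W$ comparable to $V_\varepsilon$, since $V_\varepsilon$ is not curl-free for $m\ge3$. What the argument needs is $\eta\,(\varepsilon+|\nabla u_\varepsilon|^2)^{\frac{m-2}{2}}D^{\boldsymbol\beta}u_\varepsilon$, with the weight outside the derivative. Its $D^2$ and $\Delta$ differ from $(\varepsilon+|\nabla u_\varepsilon|^2)^{\frac{m-2}{2}}D^{\boldsymbol\beta}D^2u_\varepsilon$ and $(\varepsilon+|\nabla u_\varepsilon|^2)^{\frac{m-2}{2}}D^{\boldsymbol\beta}\Delta u_\varepsilon$ only by exactly weighted lower-order terms. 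The bound on $D^{m-1}V_\varepsilon$ then follows by Leibniz and Fa\`a di Bruno.

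\textbf{The datum term is a genuine gap.} You divide by $(\varepsilon+|\nabla u_\varepsilon|^2)^{\frac{p-2}{2}}$ and multiply by the weight \emph{before} differentiating, so $f$ enters as $D^{\boldsymbol\beta}\bigl(f(\varepsilon+|\nabla u_\varepsilon|^2)^{\frac{m-p}{2}}\bigr)$.
\begin{itemize}
\item In the Leibniz expansion, the term with no derivative on $f$ is $f\,D^{\boldsymbol\beta}\bigl[(\varepsilon+|\nabla u_\varepsilon|^2)^{\frac{m-p}{2}}\bigr]$. By Fa\`a di Bruno it carries the power $\frac{m-p}{2}-r$, whereas your own count requires $\frac{m-2}{2}-r$. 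It is therefore $(\varepsilon+|\nabla u_\varepsilon|^2)^{-\frac{p-2}{2}}$ times a product controlled by the induction.
\item Your condition $m-p>m-3$ only covers the terms where at least one derivative falls on $f$. This term needs $m-p\ge m-2$, i.e.\ $p\le2$.
\item For $p>2$ neither of your justifications works. Bounding $f$ in sup norm (your $m=3$ argument) leaves $(\varepsilon+|\nabla u_\varepsilon|^2)^{\frac{2-p}{2}}|D^2u_\varepsilon|$, which nothing controls uniformly in $\varepsilon$. H\"older with $f\in W^{m-3,s}$ does not help either, because the defect is in the weight, not in integrability.
\end{itemize}

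There are two ways to repair this. First, you can keep $f$ attached to its weight and use the equation: $|f|(\varepsilon+|\nabla u_\varepsilon|^2)^{\frac{2-p}{2}}\le(\sqrt n+|p-2|)|D^2u_\varepsilon|$, which the base case controls in every $L^s$. Alternatively, do as the paper does in \eqref{pEq}: differentiate the divergence-form equation first and multiply by $(\varepsilon+|\nabla u_\varepsilon|^2)^{\frac{m-p}{2}}$ afterwards. Then the only datum term is $(\varepsilon+|\nabla u_\varepsilon|^2)^{\frac{m-p}{2}}D^{\boldsymbol\nu}f$, with a bounded weight, and all other terms have exact weights. In that arrangement lower derivatives of $f$ never appear at step $m$; they are needed only through the induction hypothesis at larger exponents.

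The same issue invalidates your aside that the case $m=2$ follows ``from the same absorption with $\boldsymbol\beta=0$'' when $p>2$. There the datum term is $f(\varepsilon+|\nabla u_\varepsilon|^2)^{\frac{2-p}{2}}$ itself, and bounding it through the equation is circular. This term is exactly why Theorem~\ref{teoremacalderon} requires $p<2+\frac{1}{q-1}$: for $u=|x_1|^{p'}/p'$ it fails to be in $L^q$ beyond that threshold. Rely on the cited result for the base case, as you also do.
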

\begin{rem}
    The example given in Remark \ref{rem1} shows that the presence of the weight $|\nabla u|^{m-2}$ is necessary to establish Theorem \ref{teoprinc}.
Indeed, let $u(x_1,\ldots,x_n):=|x_1|^{p'}/p'$ be the solution to $\Delta_p u=1$ in $\mathbb{R}^n$ and let $\Omega$ be an open set such that $\Omega\cap \{x_1=0\}\neq \emptyset$, a straightforward computation shows that $\partial_{x_1^m} u \in L^q(\Omega)$ if and only if $p-1 < \frac{q}{q(m-1)-1}$. In particular, this implies that, if $p > 5/3$ then $\partial_{x_1^m} u \notin L^q(\Omega)$, for any $m\geq 3$ and any $q\geq 2$.
On the other hand 
$\partial_{x_1^{m-1}}(|\partial_{x_1} u|^{m-2}\partial_{x_1}u) \in L^q(\Omega)$ if and only if $p-2 < \frac{1}{q(m-1)-1}$, namely in a neighborhood of $p=2$.
\end{rem}
\noindent The results in the theorems above substantially strengthen and broaden those in Iandoli-Vuono \cite{IV}, and in Baratta-Sciunzi-Vuono \cite{beni}; in particular, the results therein correspond to the case $m=2$ of our statements. To the best of our knowledge, this is the first result concerning higher-order regularity for solutions to the $p$-Laplace equation.

\noindent Let us now discuss the state of the art of the regularity results concerning the equation \eqref{eq:problema}. 
\noindent In general, solutions of \eqref{eq:problema} are not classical. It is well known (see \cite{DB,GT,LIB,T}) that for every $p>1$ and under suitable assumptions on the source term,  solutions of \eqref{eq:problema} are of class $C^{1,\beta}_{loc}(\Omega)\cap C^2(\Omega\setminus Z_u)$, where $Z_u$ denotes the set where the gradient vanishes.\\
The study of higher-order derivatives of solutions to quasilinear elliptic equations, with particular focus on second-order derivatives, has been the subject of extensive research. In \cite{Cma}, the authors proved the $W^{1,2}$ regularity of the stress field under minimal assumptions on the source term, namely $f\in L^2(\Omega)$. In this direction, similar results are available in \cite{antonuovo,HS,Lou}.\\
When the source term $f$ has higher regularity, stronger second-order estimates can be obtained. In particular, in \cite{MMS} the authors established sharp second-order estimates up to the boundary for the stress field $|\nabla u|^{\alpha-1}\nabla u$, with $\alpha > (p-1)/2$, thus providing a global counterpart to the local results proved in \cite{DS}. We emphasize that the analysis in \cite{MMS} is carried out under the assumption that the domain is $C^3$-smooth. As a corollary, the authors proved that the solution $u\in W^{2,2}(\Omega)$, for $1<p<3$. Via similar techniques, analogous results have been proved for more general elliptic operators, see \cite{cellina,Dong,S1,S2}.\\
The anisotropic counterpart of these results, which involves significant technical challenges, has been studied in \cite{Anto1, Anto2, BMV, CRS}. Additionally, we highlight recent advances in regularity theory for the vectorial case, presented in \cite{BaCiDiMa, Cmavett, Minchievic, MMSV, SSV}.\\
For $p$ close to $2$, more general regularity estimates, in the spirit of a natural Calderón--Zygmund theory, were obtained in \cite{beni,GuMo,IV,MRS}. We also point out the existence of an $L^{\infty}$ estimate for second derivatives in very specific cases, such as for $p$-harmonic functions in the planar case $n=2$, see \cite{IM}.
 Within this framework, a general theory has been developed in \cite{AKM,2,4,245,DM,DM2,DM3,12,13,KuuMin,16}.

\textbf{Strategy of the proof.}  
Let $x_0\in \Omega$ and  $B_{2R}:=B_{2R}(x_0)\subset\subset\Omega$, where $B_{2R}(x_0)$ denotes the open ball of radius $2R$ centered at $x_0$. First of all we consider the regularized problem
	\begin{equation} \label{eq:problregol}
		\begin{cases}
			-\operatorname{div}\left( (\varepsilon +|\nabla u_{\varepsilon}|^{2})^{\frac{p-2}{2}} \, \nabla u_{\varepsilon} \right) = f(x) & \text{in } B_{2R} \\
			u_{\varepsilon} = u & \text{on } \partial B_{2R},
		\end{cases}
	\end{equation}
	where $\varepsilon \in (0,1)$. The existence of  a weak solution of \eqref{eq:problregol} follows by a classical minimization procedure. Notice that by standard regularity results \cite{DiKaSc,GT},  the solution $ u_\varepsilon$ is regular.
	In \cite{DB,DiKaSc,L2} (see also \cite[Section 5]{Anto2} or \cite{Ant1}) it has been proven that for any compact set $K\subset \subset B_{2R}(x_0)$ the sequence ${u}_\varepsilon$ is uniformly bounded in $C^{1,\beta '}(K)$, for some $0<\beta'<1$. Moreover, it follows that
		${u}_\varepsilon\rightarrow  u$ in the norm $\|\cdot\|_{C^{1,\beta}(K)},$ for all $0\le \beta<\beta '$.

\textbf{Ideas of the proof of Theorem \ref{teoprinc}.}
The proof of the $L^q$ regularity result is based on a uniform analysis of the regularized version of the $p$-Laplace equation, see \eqref{eq:problregol}. We consider the  smooth solutions $u_{\varepsilon}$, which allows us to differentiate the equation and derive estimates at the level of classical solutions.
The main step consists in establishing uniform weighted $L^q$-bounds for derivatives of order $m$ of $u_{\varepsilon}$, where the weights are given by suitable powers of  $(\varepsilon+|\nabla u_{\varepsilon}|^2)$. These bounds are obtained by induction on the order of differentiation. The highest-order terms yield a weighted operator acting on the derivatives of order $m$, while the remaining contributions, arising from the nonlinear structure of the operator, are treated as lower-order terms. Their control relies on Calderón–Zygmund estimates (see Lemma \ref{Zygmund}), Hölder inequalities, and multivariate Faà di Bruno formulas (see Theorem \ref{FAAC}).

In our inductive scheme, each step requires applying the inductive hypothesis to derivatives of progressively lower order; however, this reduction is carried out through H\"older inequalities, which inevitably increase the integrability exponents involved. Consequently, the lower‑order derivatives of the source term $f$ must possess sufficiently high integrability, a requirement that naturally leads us to impose the condition $f\in S_{{loc}}$. Although in principle it would suffice to assume finite integrability with a single, sufficiently large exponent $s$, we prefer to require the condition for every $s$ in order to avoid further technical complications.

The assumption that $p$ sufficiently close to $2$ ensures that the coefficients generated by the differentiation of the  operator can be absorbed, allowing the induction argument to close. As a result, one obtains uniform weighted $L^q$-estimates that are stable as $\varepsilon$ goes to zero. Passing to the limit we prove the thesis of Theorem \ref{teoprinc}. Note that the base case of this induction argument,  is given by the result in \cite{beni}, see, in particular Remark \ref{regolarizziamo}.

\textbf{Ideas of the proof of Theorem \ref{INFINITO}.}
The proof of the $L^{\infty}$-estimates builds on the weighted $L^q$-regularity established for the regularized problem. In particular, we prove that the nonlinear quantity $g_{\varepsilon}:=(\varepsilon+|\nabla u_{\varepsilon}|^2)^{\frac{m-2}{2}}\nabla u_{\varepsilon}$ satisfies the following  family of localized reverse Sobolev-type inequalities of Moser type, with constants independent of the regularization parameter

\begin{align*}
    \begin{split}
    \|g_{\varepsilon}\|_{L^{2^*q}(B_{h'})}^q\leq \mathcal{C}\frac{q}{h-h'}\|g_{\varepsilon}\|_{L^{\zeta(q-\hat{s})}(B_h)}^{q-\hat{s}}, \quad 2<\zeta<2^*,\quad  q\geq {1}/{k}=:\hat{s}.
    \end{split}
\end{align*}
These inequalities provide a quantitative gain of integrability on concentric balls and constitute the fundamental step of an iteration procedure.
At this stage, the Moser iteration scheme developed by Iandoli–Vuono \cite{IV}, inspired by the classical works \cite{Moser,serrin}, can be applied as a black box.  Therefore, the main estimate to obtain, in order to prove Theorem \ref{INFINITO} is the one above, this is done in Prop. \ref{dinodino}.  A key step is performed in the preparation Lemma \ref{pantusa1}, where we get some preliminary estimates. Here we test the  regularized $m$-linearized equation against a well chosen test function, we exlpoit  the weighted high‑order $L^q$-bounds to control commutators and nonlinear remainders.

\textbf{Organization.} In Section \ref{notations}, we recall several results that shall be systematically used throughout the paper. In Section \ref{sec:Lq} we prove Theorem \ref{teoprinc} and in Section  \ref{sec:Linf} we eventually prove Theorem \ref{INFINITO}.
	
	\section{Notations and Preliminary results}\label{notations}
We say that $\Omega \subseteq \mathbb{R}^n$ is a domain if it is open and connected. Generic fixed and numerical constants will be denoted by $C$ and they will be allowed to vary within a single line or
formula. Moreover for typographic reasons we shall sometimes write $A\lesssim B$, meaning that there exists a constant $C$ such that $A\leq CB$, such a constant may change from line to line.

For $a,b \in \mathbb{R}^n$, we denote by $a \otimes b$ the matrix whose entries are $(a \otimes b)_{ij} = a_ib_j$. Given  $A,B$ be two matrices in $\mathbb{R}^{n\times n}$, $C$ be a third-order tensor in $\mathbb{R}^{n^3}$ and $v$ be a vector in $\mathbb{R}^n$, we use the following standard notation:
\begin{equation*}
    [A B]_{ij} = \sum_{k=1}^n a_{ik}b_{kj}, \quad 
    [C[v]]_{ij} = \sum_{k=1}^n c_{ijk}v_{k}, \quad i,j=1,...,n.
\end{equation*}
We denote by $\nabla u$ and by $D^2 u$ respectively the gradient vector and the Hessian matrix of $u$.\\
We recall  the Calder\'on-Zygmund inequality  (see e.g. \cite[Corollary 9.10]{GT}):
	
	\begin{lem}\label{Zygmund}
    Let $\Omega$ be a domain of $\R^n$, $n\geq 2$, and let $w \in W^{2,q}_{0}(\Omega)$. Then there exists a positive constant $C(n,q)$ such that 
		\begin{equation} \label{eq:CZ}
			\|D^{2}w\|_{L^{q}(\Omega)} \le C(n,q) \|\Delta w\|_{L^{q}(\Omega)}.
		\end{equation}
	\end{lem}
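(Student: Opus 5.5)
The approach is the classical one: approximate $w$ by test functions, use the Fourier transform to obtain the $L^2$ case, and then pass to general $q$ through singular integral theory. Since $W^{2,q}_0(\Omega)$ is by definition the closure of $C^\infty_c(\Omega)$ in the $W^{2,q}$ norm, and both sides of \eqref{eq:CZ} are continuous with respect to that norm, it suffices to prove the inequality for $w\in C^\infty_c(\Omega)$. Extending such a $w$ by zero gives a function in $C^\infty_c(\R^n)$, and the two $L^q$ norms are unchanged. So the whole problem reduces to proving
\[
\|\partial_{ij} w\|_{L^q(\R^n)}\le C(n,q)\|\Delta w\|_{L^q(\R^n)}\qquad \text{for all } w\in C^\infty_c(\R^n),
\]
with a constant that depends only on $n$ and $q$, not on $\Omega$. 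This independence from $\Omega$ is exactly what the statement asserts.

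For such $w$, set $g:=\Delta w$. Then $w=\Gamma * g$, where $\Gamma$ is the fundamental solution of the Laplacian. On the Fourier side this reads $\widehat{\partial_{ij}w}(\xi)=\frac{\xi_i\xi_j}{|\xi|^2}\widehat{g}(\xi)$, so $\partial_{ij}w=-R_iR_j g$, where $R_i$ denote the Riesz transforms. For $q=2$ the bound is immediate from Plancherel, since the multiplier $\xi_i\xi_j/|\xi|^2$ has modulus at most $1$. This already gives $\sum_{i,j}\|\partial_{ij}w\|_2^2=\|\Delta w\|_2^2$.

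For general $1<q<\infty$, I would use the fact that the kernel of $R_iR_j$ is, away from the origin, $\partial_{ij}\Gamma(x)=c_n\bigl(\delta_{ij}|x|^2-n x_ix_j\bigr)|x|^{-n-2}$. This kernel is homogeneous of degree $-n$, smooth on the sphere, and has mean zero there. By the Calderón–Zygmund theorem, the $L^2$ bound combined with the Hörmander condition gives a weak type $(1,1)$ estimate. Marcinkiewicz interpolation then yields the $L^q$ bound for $1<q\le 2$, and duality (the operator is self-adjoint) extends it to $q\ge 2$.

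The main obstacle is this singular-integral step, namely the weak $(1,1)$ estimate obtained through the Calderón–Zygmund decomposition. Everything else in the argument is routine. Since the lemma is classical and is quoted from \cite[Corollary 9.10]{GT}, in the paper I would simply cite Theorem 9.9 there for the $\R^n$ estimate and carry out the density and extension-by-zero reduction above. It is worth noting that $q=1$ and $q=\infty$ are excluded, which is consistent with the paper only using $q\ge 2$ finite.
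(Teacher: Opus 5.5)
Your proposal is correct and follows essentially the same route as the paper, which simply cites \cite[Corollary 9.10]{GT}: that result is proved exactly by your density reduction to $C^\infty_c$ functions plus the Newtonian-potential estimate of \cite[Theorem 9.9]{GT}, namely the $L^2$ identity, weak type $(1,1)$ via the Calder\'on--Zygmund decomposition, Marcinkiewicz interpolation, and duality. Your remark that the lemma implicitly requires $1<q<\infty$ is well taken, since the statement as written omits this restriction and the inequality fails at $q=1$ and $q=\infty$.
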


    The following theorem is very important for our purposes. 

\begin{thm}[\cite{beni,MRS}]\label{teoremacalderon}
		Let $\Omega$ be a domain of $\R^{n}$, $n\geq 2$, and $u\in C^{1,\beta}_{loc}(\Omega)$ be a weak solution to \eqref{eq:problema}, with $f(x)\in W^{1,1}_{loc}(\Omega)\cap C_{loc}^{0,\beta'}(\Omega).$ Let $q \ge 2$ and $p$ be such that $$2-\frac{1}{C(n,q)}<p < \min\left\{2+\frac{1}{q-1},2+\frac{1}{C(n,q)}\right\},$$ with $C(n,q)$ given by \eqref{eq:CZ}. Then we have $u \in W^{2,q}_{loc}(\Omega).$
		
\end{thm}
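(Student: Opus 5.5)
The strategy is the one from \cite{MRS,beni}: work with the regularized problem \eqref{eq:problregol}, obtain $W^{2,q}$ bounds on $u_\varepsilon$ that do not depend on $\varepsilon$, and then let $\varepsilon\to0$.

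\textbf{Step 1: rewrite the equation in nondivergence form.} Fix $B_{2R}\subset\subset\Omega$. Since $u_\varepsilon$ is smooth, we can expand the divergence in \eqref{eq:problregol}:
\begin{equation*}
\Delta u_\varepsilon=-(p-2)\frac{(D^2u_\varepsilon\nabla u_\varepsilon,\nabla u_\varepsilon)}{\varepsilon+|\nabla u_\varepsilon|^2}-\frac{f}{(\varepsilon+|\nabla u_\varepsilon|^2)^{\frac{p-2}{2}}}.
\end{equation*}
The first term is bounded pointwise by $|p-2|\,|D^2u_\varepsilon|$. The second term behaves differently on the two sides of $p=2$.
\begin{itemize}
\item If $p\le2$, then $(\varepsilon+|\nabla u_\varepsilon|^2)^{(2-p)/2}$ is uniformly bounded, because $\nabla u_\varepsilon$ is bounded in $C^{0,\beta'}$ uniformly in $\varepsilon$.
\item If $p>2$, the factor $(\varepsilon+|\nabla u_\varepsilon|^2)^{-(p-2)/2}$ can blow up near critical points. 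This is where the restriction $p<2+\frac1{q-1}$ and the regularity of $f$ will be used.
\end{itemize}

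\textbf{Step 2: localize and apply Calder\'on--Zygmund.} Take a cutoff $\varphi\in C^\infty_c(B_{2R})$ with $\varphi=1$ on $B_R$, and apply Lemma \ref{Zygmund} to $w=\varphi u_\varepsilon$. This gives
\begin{equation*}
\|D^2(\varphi u_\varepsilon)\|_{L^q}\le C(n,q)\big(|p-2|\,\|\varphi D^2u_\varepsilon\|_{L^q}+\|\varphi f(\varepsilon+|\nabla u_\varepsilon|^2)^{-\frac{p-2}{2}}\|_{L^q}+\text{l.o.t.}\big).
\end{equation*}
The lower-order terms involve only $\nabla\varphi$, $D^2\varphi$ and $\nabla u_\varepsilon$, $u_\varepsilon$, so they are uniformly bounded. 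Because $C(n,q)|p-2|<1$, the first term on the right can be absorbed into the left-hand side. This requires $\|D^2u_\varepsilon\|_{L^q(B_{2R})}<\infty$ for each fixed $\varepsilon$, which holds by the smoothness of $u_\varepsilon$.

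\textbf{Step 3: the singular source term when $p>2$ (the main obstacle).} We need $\int\varphi^q|f|^q(\varepsilon+|\nabla u_\varepsilon|^2)^{-q(p-2)/2}$ to be bounded independently of $\varepsilon$. The plan follows the weighted estimates of \cite{beni}.
\begin{itemize}
\item First, use the summability of the inverse of the weight, $\int(\varepsilon+|\nabla u_\varepsilon|^2)^{-\gamma}<C$ for $\gamma<1$, in the form available from \cite{beni,MRS}. Here $f\in W^{1,1}\cap C^{0,\beta'}$ enters, since such results are proved by testing the linearized equation.
\item Alternatively, near the critical set use the equation itself: $f=-\Delta_p u$ controls $|f|\lesssim|\nabla u_\varepsilon|^{p-2}|D^2u_\varepsilon|$ in an integrated sense. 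Then the quotient is bounded by $|D^2u_\varepsilon|^{q}$ times a small power. This power can be absorbed by H\"older precisely when $q(p-2)<q/(q-1)\cdot\ldots$, which is where the condition $p<2+\frac1{q-1}$ should come from.
\end{itemize}
I would try the first route first: Hölder splits the integral into $\|f\|_\infty^q$ times an integral of $(\varepsilon+|\nabla u_\varepsilon|^2)^{-q(p-2)/2}$. Its exponent satisfies $q(p-2)/2<\frac{q}{2(q-1)}\le1$ for $q\ge2$, which falls in the integrable range of the inverse-weight estimate.

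\textbf{Step 4: pass to the limit.} Steps 2 and 3 give $\|D^2u_\varepsilon\|_{L^q(B_R)}\le C$ uniformly in $\varepsilon$. Since $u_\varepsilon\to u$ in $C^1$, weak compactness in $W^{2,q}(B_R)$ and lower semicontinuity of the norm give $u\in W^{2,q}(B_R)$. Covering $\Omega$ by such balls yields the conclusion.
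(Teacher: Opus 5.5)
The paper gives no proof of this statement; it imports it from \cite{beni,MRS}, together with the uniform bound on $\|D^2u_\varepsilon\|_{L^q(B_R)}$ recorded in Remark~\ref{regolarizziamo}. Your Steps 1, 2 and 4 follow the standard scheme and are correct. Step 3 has a genuine gap, and it is the only place where $f\in W^{1,1}_{loc}$ and $p<2+\frac{1}{q-1}$ can enter.

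\textbf{Route 1 fails.} It needs a bound $\int(\varepsilon+|\nabla u_\varepsilon|^2)^{-\gamma}\,dx\le C$ uniform in $\varepsilon$. Under the stated hypotheses this is false. Summability of the inverse of the weight (of Damascelli--Sciunzi type) requires a strict sign on $f$, e.g.\ $f\ge\tau>0$ locally, and here $f$ is arbitrary.

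Already $f\equiv0$, $u\equiv0$ gives $u_\varepsilon\equiv0$ and $\int_{B_R}(\varepsilon+|\nabla u_\varepsilon|^2)^{-\gamma}\,dx=\varepsilon^{-\gamma}|B_R|\to\infty$. More generally, the integral blows up whenever $f$ vanishes on an open set where $u$ is constant. Your H\"older step replaces $f$ by $\|f\|_\infty$, so it discards exactly the factor that kills the integrand there.

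Even with a sign on $f$, the admissible range is $|\nabla u_\varepsilon|^{-t}$ with $t<p-1$, not $t<2$. With the correct range your condition becomes $q(p-2)<p-1$, which is exactly $p<2+\frac{1}{q-1}$. Your version only needs $p<2+\frac{2}{q}$ and never uses the hypothesis, which signals the misstatement.

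\textbf{Route 2 is circular as written.} Inserting the equation into all $q$ powers of $f$ gives $|f|^q(\varepsilon+|\nabla u_\varepsilon|^2)^{-\frac{q(p-2)}{2}}\le(\sqrt n+|p-2|)^q|D^2u_\varepsilon|^q$. That constant is not small, so nothing can be absorbed.

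\textbf{The missing ingredient} is the uniform weighted Hessian estimate
\[
\int_{B_R}(\varepsilon+|\nabla u_\varepsilon|^2)^{\frac{p-2-\beta}{2}}|D^2u_\varepsilon|^2\,dx\le C,\qquad 0\le\beta<1,
\]
which requires no sign on $f$. For $p\ge2$, test the differentiated equation $\int\partial_i\big((\varepsilon+|\nabla u_\varepsilon|^2)^{\frac{p-2}{2}}\nabla u_\varepsilon\big)\cdot\nabla\psi\,dx=\int\partial_if\,\psi\,dx$ with $\psi=\partial_iu_\varepsilon(\varepsilon+|\nabla u_\varepsilon|^2)^{-\beta/2}\varphi^2$, and sum over $i$.
\begin{itemize}
\item The quadratic part is bounded below by $\min\{1,(p-1)(1-\beta)\}(\varepsilon+|\nabla u_\varepsilon|^2)^{\frac{p-2-\beta}{2}}|D^2u_\varepsilon|^2\varphi^2$.
\item The cutoff terms are handled by Young's inequality.
\item The source term is at most $\|\nabla f\|_{L^1}\|\nabla u_\varepsilon\|_{L^\infty}^{1-\beta}$. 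This is where $f\in W^{1,1}_{loc}$ is used.
\end{itemize}

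Then apply the equation to only two powers of $f$. Pointwise, $|f|\le(\sqrt n+|p-2|)(\varepsilon+|\nabla u_\varepsilon|^2)^{\frac{p-2}{2}}|D^2u_\varepsilon|$, so
\[
|f|^q(\varepsilon+|\nabla u_\varepsilon|^2)^{-\frac{q(p-2)}{2}}\le C\|f\|_{L^\infty}^{q-2}(\varepsilon+|\nabla u_\varepsilon|^2)^{\frac{p-2-\beta}{2}}|D^2u_\varepsilon|^2,\qquad \beta:=(q-1)(p-2).
\]
The condition $\beta<1$ is exactly $p<2+\frac{1}{q-1}$. With this replacing Step 3, your absorption argument (using $C(n,q)|p-2|<1$) and the limit $\varepsilon\to0$ go through.
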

	We shall make repeated use  of the following remark.
\begin{rem}\label{regolarizziamo}
    Let $u,q$ and $p$ as in Theorem \ref{teoremacalderon} and let $u_\varepsilon$ be a solution of \eqref{eq:problregol}. In order to establish our results we shall also exploit the fact that for any ball $B_{2R}(x_0) \subset\subset \Omega$ there exists positive constant $\mathcal{C}$ depending on $n$, $q$, $R$, $p$, $ \| \nabla u\|_{L^{\infty}(B_{2R}(x_0))}$,$\|f\|_{C^{0,\beta'}(B_{2R}(x_0))},\|f\|_{W^{1,1}(B_{2R}(x_0))}$ such that 
	$\|D^{2}u_{\varepsilon}\|_{L^{q}(B_{R}(x_0))}\le \mathcal{C}.$
   This is a consequence of Theorem \ref{teoremacalderon}, see \cite[Proof of Theorem 1.3]{beni}.
\end{rem}
Throughout this paper, we employ a multivariate version of the Faà di Bruno formula. To simplify expressions, we  recall some standard multivariate notations. If $\boldsymbol{\nu} = (\nu_1,...,\nu_n) \in \mathbb{N}^n$, then
\begin{equation*}
\begin{split}
 &|\boldsymbol{\nu}|=\sum_{i=1}^n \nu_i,\quad \boldsymbol{\nu}!= \prod_{i=1}^n \nu_i !,\quad D^{\boldsymbol{\nu}} = \frac{\partial^{|\boldsymbol{\nu}|}}{\partial_{x_1}^{\nu_1}\cdots \partial_{x_n}^{\nu_n}}, \ \text{ for } |\boldsymbol{\nu}|>0.
\end{split}
\end{equation*}
Moreover, if $\boldsymbol{\gamma} = (\gamma_1,...,\gamma_n) \in \mathbb{N}^n$ we write $\boldsymbol{\gamma}\leq \boldsymbol{\nu}$ provided that $\gamma_i \leq \nu_i$ for $i=1,\dots,n$. W also set $\tbinom{\boldsymbol{\nu}}{\boldsymbol{\gamma}} = \prod_{i=1}^n \tbinom{\nu_i}{\gamma_i}$.
In addition, we introduce the following order on $\mathbb{N}^n$. If $\boldsymbol{l} = (l_1,...,l_n)$ and $\boldsymbol{\nu} = (\nu_1,...,\nu_n)$ are in $\mathbb{N}^n$, we write $\boldsymbol{l}\prec \boldsymbol{\nu}$ provided one of the following holds:
\begin{enumerate}
    \item[i)] $|\boldsymbol{l}| < |\boldsymbol{\nu}|$;
    \item[ii)] $|\boldsymbol{l}| = |\boldsymbol{\nu}|$ and $l_1 < \nu_1$;
    \item[iii)] $|\boldsymbol{l}| = |\boldsymbol{\nu}|$, $l_1 = \nu_1,\dots,l_k=\nu_k$ and $l_{k+1} < \nu_{k+1}$ for some $1\leq k <n$.
\end{enumerate}
\begin{thm}\cite[Corollary $2.10$]{FAA}\label{FAAC}.
    Let $|\boldsymbol{\nu}|=m\geq1$ and $h(x_1,...,x_n)=f[g(x_1,...,x_n)]$ with $g\in C^{\boldsymbol{\nu}}(\boldsymbol{x_0)}$ and $f\in C^m(y_0)$, where $y_0=g(\boldsymbol{x}_0)$. Then
    \begin{equation}\label{FAA_INTRO}
        D^{\boldsymbol{\nu}}h= \sum_{r=1}^m D^rf\sum_{p(\boldsymbol{\nu},r)}(\boldsymbol{\nu}!)\prod_{j=1}^m \frac{(D^{\boldsymbol{l}_j}g)^{k_j}}{(k_j!)(\boldsymbol{l}_j!)^{k_j}},
    \end{equation}
    where
\begin{equation*}
\begin{split}
p(\boldsymbol{\nu},r) = \{&(k_1,...,k_m;\boldsymbol{l}_1,...,\boldsymbol{l}_m): \text{for some }1\leq s \leq m,\ k_i=0 \text{ and } \boldsymbol{l}_i = \boldsymbol{0}\\
&\quad\text{ for } 1\leq i\leq m-s;\ k_i > 0 \text{ for } m - s+1 \leq i \leq m;\\
&\qquad \text{ and } 0 \prec \boldsymbol{l}_{m-s+1} \prec \cdots\prec \boldsymbol{l}_{m} \text{ are such that }\sum_{i=1}^{m} k_i = r, \sum_{i=1}^{m} k_i \boldsymbol{l}_i = \boldsymbol{\nu}\}.
\end{split}
\end{equation*}
\end{thm}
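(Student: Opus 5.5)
The plan is to deduce the multi-index formula \eqref{FAA_INTRO} from the elementary \emph{set-partition} version of the chain rule, and then count how many set partitions collapse onto each term of the sum over $p(\boldsymbol{\nu},r)$. Throughout, $f$ is a function of one real variable and $g$ is scalar.

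First I would list the derivatives in $D^{\boldsymbol{\nu}}$ as a word of length $m$. Set $\boldsymbol{\nu}=(\nu_1,\dots,\nu_n)$, $m=|\boldsymbol{\nu}|$, and write $D^{\boldsymbol{\nu}}=\partial_{i_1}\partial_{i_2}\cdots\partial_{i_m}$, where the index $i$ appears exactly $\nu_i$ times. For a subset $B\subseteq\{1,\dots,m\}$ put $\partial_B:=\prod_{t\in B}\partial_{i_t}$. The first step is the identity
\begin{equation*}
\partial_{i_1}\cdots\partial_{i_m} h=\sum_{\pi}f^{(|\pi|)}(g)\prod_{B\in\pi}\partial_B g ,
\end{equation*}
where $\pi$ runs over all set partitions of $\{1,\dots,m\}$. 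I would prove this by induction on $m$.

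For the inductive step, apply $\partial_{i_{m+1}}$ to a summand indexed by $\pi$. By the product and chain rules this produces two kinds of terms:
\begin{itemize}
\item $f^{(|\pi|+1)}(g)\,\partial_{i_{m+1}}g\prod_{B\in\pi}\partial_B g$, which corresponds to the partition $\pi\cup\{\{m+1\}\}$;
\item $f^{(|\pi|)}(g)$ times the product in which one factor $\partial_Bg$ is replaced by $\partial_{B\cup\{m+1\}}g$, which corresponds to inserting $m+1$ into the block $B$.
\end{itemize}
Each partition of $\{1,\dots,m+1\}$ arises exactly once in this way, which closes the induction. Only derivatives of $g$ along subwords of the fixed word occur, and all of them are of the form $D^{\boldsymbol{\gamma}}g$ with $\boldsymbol{\gamma}\le\boldsymbol{\nu}$. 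So the hypothesis $g\in C^{\boldsymbol{\nu}}(\boldsymbol{x}_0)$, $f\in C^m(y_0)$ suffices, with equality of mixed partials used only within that class.

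Second, I would group the partitions according to their \emph{type}. A block $B$ determines the multi-index $\boldsymbol{l}(B)$ whose $i$-th entry counts the $t\in B$ with $i_t=i$, and then $\partial_Bg=D^{\boldsymbol{l}(B)}g$. The type of $\pi$ is the multiset $\{\boldsymbol{l}(B)\}_{B\in\pi}$. Write its distinct elements in increasing $\prec$-order as $\boldsymbol{l}_{m-s+1}\prec\cdots\prec\boldsymbol{l}_m$, with multiplicities $k_j$, and pad with $k_i=0$, $\boldsymbol{l}_i=\boldsymbol{0}$. This gives exactly an element of $p(\boldsymbol{\nu},r)$ with $r=|\pi|$, since $\sum k_j=r$ and $\sum k_j\boldsymbol{l}_j=\boldsymbol{\nu}$. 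Conversely, every element of $p(\boldsymbol{\nu},r)$ arises this way. The strict total order $\prec$ is used only to make this labelling canonical.

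The main step, and the one I expect to need the most care, is the count: the number of set partitions of the coloured set $\{1,\dots,m\}$ (position $t$ has colour $i_t$) of a given type should be
\begin{equation*}
\frac{\boldsymbol{\nu}!}{\prod_j k_j!\,(\boldsymbol{l}_j!)^{k_j}} .
\end{equation*}
I would argue colour by colour. For each colour $i$, distribute its $\nu_i$ positions among an ordered list of $r$ labelled blocks, the first $k_{m-s+1}$ of type $\boldsymbol{l}_{m-s+1}$, and so on, with prescribed counts $(\boldsymbol{l}_j)_i$. The number of ways is the multinomial $\nu_i!/\prod_j((\boldsymbol{l}_j)_i!)^{k_j}$. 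Taking the product over $i$ gives $\boldsymbol{\nu}!/\prod_j(\boldsymbol{l}_j!)^{k_j}$ labelled arrangements. Blocks of the same type are interchangeable, and distinct blocks of the same type are distinct subsets, so the $k_j!$ permutations act freely. Dividing by $\prod_j k_j!$ gives the count above.

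Summing the first-step identity over types, with this multiplicity, yields \eqref{FAA_INTRO}.
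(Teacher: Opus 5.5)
Your proof is correct and complete. The set-partition chain rule closes by induction exactly as you describe. The $\prec$-sorted type of a partition $\pi$ is an element of $p(\boldsymbol{\nu},r)$ with $r=|\pi|$, and the padded entries contribute factors equal to $1$. The colour-by-colour multinomial count, divided by the free action of $\prod_j k_j!$ on labelled arrangements, gives precisely $\boldsymbol{\nu}!/\prod_j k_j!\,(\boldsymbol{l}_j!)^{k_j}$.

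Your route is genuinely different from the paper's, because the paper gives no proof of this statement. It quotes the formula from Constantine--Savits \cite{FAA}, where it is a corollary of their general theorem for outer functions of several variables composed with vector-valued inner maps.

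What your approach buys:
\begin{itemize}
\item It is a self-contained, elementary argument that uses only the one-variable chain and product rules.
\item It exhibits each coefficient as the number of set partitions of the coloured word $i_1\cdots i_m$ with a prescribed multiset of block types. This makes integrality and positivity evident, and shows that the order $\prec$ is only bookkeeping.
\item It makes explicit that only $D^{\boldsymbol{\gamma}}g$ with $\boldsymbol{\gamma}\le\boldsymbol{\nu}$ and $f^{(r)}$ with $r\le m$ are involved.
\item If you stratify the types by the number $s$ of distinct block types, you obtain the form \eqref{FFA_s}, which the paper also uses, directly and without the identification of \cite[Remark 2.2]{FAA}.
\end{itemize}

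The citation buys brevity and a vector-valued generality that the paper never needs. All of its applications compose a scalar function $t\mapsto(\varepsilon+t)^{\theta}$ with $|\nabla u_\varepsilon|^2$.
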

\begin{rem}
    Note that , see \cite[Remark 2.2]{FAA}, $p(\boldsymbol{\nu},r)$ can be identified with the union of $p_s(\boldsymbol{\nu},r)$ for $s=1,...,m$. Thus, we can rewrite \eqref{FAA_INTRO} as follows:
    \begin{equation}\label{FFA_s}
        D^{\boldsymbol{\nu}}h= \sum_{r=1}^m D^rf\sum_{s=1}^m\sum_{p_s(\boldsymbol{\nu},r)}(\boldsymbol{\nu}!)\prod_{j=1}^s \frac{(D^{\boldsymbol{l}_j}g)^{k_j}}{(k_j!)(\boldsymbol{l}_j!)^{k_j}},
    \end{equation}
    where
\begin{equation*}
\begin{split}
    p_s(\boldsymbol{\nu},{r})= \Big\{({k}_1,...,{k}_s;\boldsymbol{l}_1,...,\boldsymbol{l}_s):  0\prec \boldsymbol{l}_1\prec\cdots \prec\boldsymbol{l}_s;k_i>0;\sum_{i=1}^s k_i ={r}, \sum_{i=1}^{s} k_i \boldsymbol{l}_i = \boldsymbol{\nu}\Big\}.
\end{split}
\end{equation*}   
\end{rem}
The following approximation lemma shall be useful.
\begin{lem}\label{approssimazione}
Let $\Omega$ be a domain in $\mathbb{R}^n$, $n\geq 2$. Let $f \in W^{1,q}_{loc}(\Omega)\cap C^{0, \beta}_{loc}(\Omega)$, with $ \beta<1$ and $q \geq 1$. Then there  exists a sequence  of smooth functions $\{f_l\} \subset C^\infty(\Omega)$ such that $f_l$ converges to $f$ in $W^{1,q}_{loc}(\Omega)$.
Moreover, for any $K \subset \subset \Omega$, $\|f_l\|_{C^{0,\beta}(K)}\leq \|f\|_{C^{0,\beta}(K_\delta)}$, where  $K_\delta := \{x\in\Omega:\mathrm{dist}(x,K)<\delta\}\subset\subset\Omega$.
\end{lem}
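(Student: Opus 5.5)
The plan is a standard mollification, cut off to handle the H\"older bound. Since $K\subset\subset\Omega$, we fix $\delta>0$ with $K_\delta\subset\subset\Omega$. Let $\rho\in C^\infty_c(B_1)$ be a standard mollifier with $\rho\ge0$ and $\int\rho=1$, and set $\rho_\eta(x)=\eta^{-n}\rho(x/\eta)$. To obtain functions defined on all of $\Omega$, we either take an exhaustion of $\Omega$ by compact sets $K_j$ with a partition of unity (Meyers--Serrin), or, more simply, multiply $f$ by a cut-off $\chi_j\in C^\infty_c(\Omega)$ equal to $1$ on a neighbourhood of $K_j$. We then set $f_l:=\rho_{\eta_l}*(\chi_{j(l)} f)$, extended by zero outside $\Omega$, with $\eta_l\to0$ and $j(l)\to\infty$. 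Each $f_l$ lies in $C^\infty(\Omega)$.

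\textbf{Convergence in $W^{1,q}_{loc}$.} Fix $K'\subset\subset\Omega$. For $l$ large, $\chi_{j(l)}\equiv1$ on the $\eta_l$-neighbourhood of $K'$, so on $K'$ we have $f_l=\rho_{\eta_l}*f$ and $\nabla f_l=\rho_{\eta_l}*\nabla f$. The classical fact that mollifications converge in $L^q_{loc}$ for $q\ge1$ then gives $f_l\to f$ and $\nabla f_l\to\nabla f$ in $L^q(K')$.

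\textbf{H\"older bound.} For $l$ large enough that $\eta_l<\delta$ and $\chi_{j(l)}\equiv 1$ on $K_\delta$, every $x\in K$ satisfies $f_l(x)=\int \rho_{\eta_l}(z)f(x-z)\,dz$ with $x-z\in K_\delta$. Two estimates follow.
\begin{enumerate}
\item[(i)] $|f_l(x)|\le \sup_{K_\delta}|f|\int\rho_{\eta_l}=\sup_{K_\delta}|f|$.
\item[(ii)] For $x,y\in K$, $|f_l(x)-f_l(y)|\le\int\rho_{\eta_l}(z)|f(x-z)-f(y-z)|\,dz\le [f]_{C^{0,\beta}(K_\delta)}|x-y|^\beta$. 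This uses that $x-z,y-z\in K_\delta$, and that the seminorm on $K_\delta$ is computed over all pairs of points in $K_\delta$ (this does not need the segment between them to lie in $K_\delta$).
\end{enumerate}
Adding (i) and (ii) gives $\|f_l\|_{C^{0,\beta}(K)}\le\|f\|_{C^{0,\beta}(K_\delta)}$. The finitely many initial indices can be discarded by relabelling the sequence.

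The main obstacle is only bookkeeping: we need one sequence that works for every $K$ simultaneously, with the inequality holding for all $l$ rather than only eventually. One fix is a diagonal choice of $\eta_l$ together with the convention that the statement is for $l$ large, depending on $K$ and $\delta$. The other is to note that the later use of this lemma, uniform bounds on a fixed ball, needs the estimate only for $l$ large. I would state the result with ``for $l$ sufficiently large'' implicit, as is customary.
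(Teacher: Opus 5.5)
Your proposal is correct and follows the paper's argument: mollify, note that $x-z,\,y-z\in K_\delta$ once the mollification radius is below $\delta$, and bound the sup norm and the H\"older seminorm of $f_l$ on $K$ by those of $f$ on $K_\delta$. Your cut-off $\chi_j$ and your explicit ``for $l$ large, depending on $K$ and $\delta$'' caveat make precise two points the paper leaves implicit: its $f*\rho_l$ is only defined away from $\partial\Omega$, and its estimate is only proved for radius $l<\delta$.
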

\begin{proof}
  Let $K\subset\subset \Omega$. Then, there exists $\delta>0$ such that
    $K_\delta := \{x\in\Omega:\mathrm{dist}(x,K)<\delta\}\subset\subset\Omega.$
Let $\rho\in C_c^\infty(\mathbb{R}^n)$ be a standard mollifier with
$\rho\ge 0$, $\mathrm{supp}\,\rho\subset B_1(0)$, and $\int_{\mathbb{R}^n}\rho=1$, and define $\rho_l(x)=l^{-n}\rho(x/l)$, $f_l := f*\rho_l.$
For $l<\delta$, we have $f_l\in C^\infty(K)$ and, by standard approximation arguments
    $f_l \rightarrow f$ in  $ W^{1,q}(K).$
For $x,y\in K$, $x\neq y$ and $l<\delta$ it follows that $x-z$, $y-z \in K_\delta$. Thus,
\begin{equation*}
    \begin{split}
        |f_l(x)-f_l(y)|
&=\left|\int_{\mathbb{R}^n} (f(x-z)-f(y-z))\rho_l(z)\,dz\right|\\
&\le \int_{\mathbb{R}^n} |f(x-z)-f(y-z)|\rho_l(z)\,dz\le [f]_{C^{0,\beta}(K_\delta)}|x-y|^{ \beta}.
    \end{split}
\end{equation*}
Hence, $[f_l]_{C^{0,\beta}(K)} \le [f]_{C^{0,\beta}(K_\delta)}$.
Moreover, for any $x\in K$
\begin{equation*}
    |f_l(x)|  =\left|\int_{\mathbb{R}^n} f(x-z)\rho_l(z)\,dz\right| \leq \|f\|_{L^\infty(K_\delta)}.
\end{equation*}
\end{proof}
\section{\texorpdfstring{Weighted $L^q$ estimates}{Weighted Lq estimates}}\label{sec:Lq}
As a preliminary step toward the proof of Theorem \ref{teoprinc}, we establish the following lemma providing a uniform weighted estimate for the $m$-th derivatives of solutions to the regularized problem \eqref{eq:problregol}.
\begin{lem}\label{sisso}
    Let $\Omega$ be a domain in $\mathbb{R}^n$, $n\geq 2$ and fix a point $x_0 \in \Omega$ and a radius $R>0$ such that $B_{2R}(x_0)\subset\subset \Omega$. Let $q\geq 2$ and $\boldsymbol{\nu}$ be a multi-index of order $m-2$, with $m\geq 2$.
    Assume that $u_\varepsilon$ is a solution of \eqref{eq:problregol}, with $f \in C^{m-2,\beta'}_{loc}(\Omega),$ for some $\beta'\in (0,1)$. There exists $\mathcal{C}:=\mathcal{C}(n,m,q)>0$ small enough such that if $|p-2|<\mathcal{C}$, then
    \begin{equation}\label{RESULT}
        \left\|(\varepsilon+|\nabla u_\varepsilon|^2)^{\frac{m-2}{2}}D^{\boldsymbol{\nu}}(D^2 u_\varepsilon)\right\|_{L^q(B_R(x_0))}\leq C,
    \end{equation}
     where $C=C(n,m,q,R,\|D^{\boldsymbol{\nu}}f\|_{L^q_{loc}(\Omega)},\|f\|_{S_{loc}(\Omega)},p,\|\nabla u\|_{L^\infty_{loc}(\Omega)})$ is a positive constant independent of $\varepsilon$.
\end{lem}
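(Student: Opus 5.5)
Throughout, write $a_\varepsilon:=\varepsilon+|\nabla u_\varepsilon|^2$ and $w:=D^{\boldsymbol{\nu}}u_\varepsilon$. The proof is by induction on $m\ge 2$. The base case $m=2$ ($\boldsymbol{\nu}=0$, trivial weight) is exactly Remark \ref{regolarizziamo}. For the inductive step, assume \eqref{RESULT} holds for every order $2\le j<m$. We may take every exponent as large as needed, since $f\in S_{loc}$ gives all lower derivatives of $f$ in every $L^s$. Hence for each multi-index $\boldsymbol{\mu}$ with $|\boldsymbol{\mu}|\le m-3$ we have
$$\|a_\varepsilon^{|\boldsymbol{\mu}|/2}D^{\boldsymbol{\mu}}D^2u_\varepsilon\|_{L^s(B_{R'})}\le C$$
for all $s<\infty$ and $R<R'<2R$, uniformly in $\varepsilon$.

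\textbf{Differentiated equation.} Since $u_\varepsilon$ is smooth, write the equation in nondivergence form:
$$a_\varepsilon^{\frac{p-2}{2}}\Big(\Delta u_\varepsilon+(p-2)\frac{(D^2u_\varepsilon\nabla u_\varepsilon,\nabla u_\varepsilon)}{a_\varepsilon}\Big)=-f,$$
that is,
$$\Delta u_\varepsilon=-a_\varepsilon^{\frac{2-p}{2}}f-(p-2)\,a_\varepsilon^{-1}(D^2u_\varepsilon\nabla u_\varepsilon,\nabla u_\varepsilon).$$
Apply $D^{\boldsymbol{\nu}}$ and multiply by $a_\varepsilon^{\frac{m-2}{2}}$. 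The term in which all $m-2$ derivatives fall on $D^2u_\varepsilon$ gives
$$a_\varepsilon^{\frac{m-2}{2}}\Delta w=-(p-2)\,a_\varepsilon^{\frac{m-2}{2}-1}(D^2w\,\nabla u_\varepsilon,\nabla u_\varepsilon)+\mathcal{R},$$
where the leading term on the right has modulus at most $|p-2|\,a_\varepsilon^{\frac{m-2}{2}}|D^2w|$. The remainder $\mathcal{R}$ is expanded by Leibniz and the Faà di Bruno formula \eqref{FFA_s}, applied to $t\mapsto t^{\gamma}$ composed with $a_\varepsilon$. It contains:
\begin{itemize}
\item terms $D^{\boldsymbol{\gamma}}f\cdot D^{\boldsymbol{\nu}-\boldsymbol{\gamma}}\big(a_\varepsilon^{\frac{2-p}{2}}\big)\, a_\varepsilon^{\frac{m-2}{2}}$;
\item products of the form $a_\varepsilon^{\text{power}}\prod_j D^{\boldsymbol{l}_j}D^2u_\varepsilon$ with all $|\boldsymbol{l}_j|<m-2$.
\end{itemize}
Homogeneity counting (each $D^{\boldsymbol{l}}D^2u_\varepsilon$ costs $a_\varepsilon^{-|\boldsymbol{l}|/2}$, compensated by the factor $a_\varepsilon^{\frac{m-2}{2}}$, while the extra factors $\nabla u_\varepsilon$ and derivatives of $a_\varepsilon$ supply bounded powers) shows that every such product can be written as $\prod_j\big(a_\varepsilon^{|\boldsymbol{l}_j|/2}D^{\boldsymbol{l}_j}D^2u_\varepsilon\big)$ times a bounded factor. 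Here $|a_\varepsilon^{1/2}|\le C$ by the uniform $C^{1,\beta'}$ bound, and nonnegative leftover powers of $a_\varepsilon$ are harmless. The exponent $\frac{2-p}{2}$ carries factors $(p-2)$ which we do not even need. Hölder's inequality with large exponents, together with the induction hypothesis, bounds $\|\mathcal{R}\|_{L^q(B_{R'})}$ uniformly in $\varepsilon$. Since $m-2\ge1$, the $f$-terms are controlled by $\|D^{\boldsymbol{\nu}}f\|_{L^q}$ and $\|f\|_{S_{loc}}$; when $m=3$, the Hölder assumption on $f$ together with Remark \ref{regolarizziamo} suffices.

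\textbf{Localization and Calderón–Zygmund.} Take a cutoff $\varphi\in C^\infty_c(B_{R'})$ with $\varphi\equiv1$ on $B_R$, and set $W:=\varphi\,a_\varepsilon^{\frac{m-2}{2}}w\in W^{2,q}_0$. Compute $\Delta W$. The principal part is $\varphi a_\varepsilon^{\frac{m-2}{2}}\Delta w$. The commutator terms involve $\nabla\varphi$, $D^2\varphi$, and derivatives of $a_\varepsilon^{\frac{m-2}{2}}$ multiplied by $w$ or $\nabla w$. These commutators contain derivatives of $u_\varepsilon$ of order at most $m-1$; with the powers of $a_\varepsilon$ these are again lower-order products and are controlled by the induction hypothesis, with $\nabla w=D^{\boldsymbol{\nu}}\nabla u_\varepsilon$ of order $m-1$. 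Note that $\nabla w$ appears multiplied by $\nabla(a_\varepsilon^{\frac{m-2}{2}})$, which costs an extra derivative. To handle this, replace $D^2w$ by $D^2W$ modulo such commutators. Lemma \ref{Zygmund} then yields
$$\|D^2W\|_{L^q}\le C(n,q)\big(|p-2|\,\|D^2W\|_{L^q}+C\big).$$
Choosing $\mathcal{C}<1/(2C(n,q))$ and absorbing gives the uniform bound on $\|D^2W\|_{L^q(B_{R'})}$. Finally,
$$a_\varepsilon^{\frac{m-2}{2}}D^{\boldsymbol{\nu}}D^2u_\varepsilon=D^2W-\text{(commutators)}\quad\text{on } B_R,$$
which proves \eqref{RESULT}.

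\textbf{Main obstacle.} The delicate point is the bookkeeping of weights in the commutator terms. For example, $D^2\big(a_\varepsilon^{\frac{m-2}{2}}\big)\,w$ produces
$$a_\varepsilon^{\frac{m-2}{2}-1}\,|D^2u_\varepsilon|^2\,|D^{\boldsymbol{\nu}}u_\varepsilon|,$$
and each factor must be matched with a weight $a_\varepsilon^{j/2}$ appropriate to its order. Check: $D^{\boldsymbol{\nu}}u_\varepsilon=D^{\boldsymbol{\nu}'}D^2u_\varepsilon$ with $|\boldsymbol{\nu}'|=m-4$ needs $a_\varepsilon^{\frac{m-4}{2}}$, and $a_\varepsilon^{\frac{m-2}{2}-1}=a_\varepsilon^{\frac{m-4}{2}}$, so the weights match exactly. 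If $m=3$, then $w=\partial u_\varepsilon$ is bounded and the term is fine. In general the scaling is consistent, but it must be verified for every Faà di Bruno term, including those with $r$ derivatives hitting $a_\varepsilon^{\gamma}$, which contribute $a_\varepsilon^{\gamma-r}$ against $2r$ factors $\nabla u_\varepsilon$ or $D^{\boldsymbol{l}}\nabla u_\varepsilon$. If some term fails to close, the fallback is to strengthen the induction hypothesis to mixed weighted products in $L^s$ for all $s$. The dependence of $\mathcal{C}$ on $m$ enters through the combinatorial constants only through the leading term, which keeps $\mathcal{C}=\mathcal{C}(n,m,q)$.
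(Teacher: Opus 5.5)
Your scheme matches the paper's: induction on $m$, Calder\'on--Zygmund applied to $\varphi\,a_\varepsilon^{\frac{m-2}{2}}D^{\boldsymbol{\nu}}u_\varepsilon$, and absorption of the $(p-2)$-term. However, the assertion that all commutators involve derivatives of order at most $m-1$ fails in the first inductive step $m=3$, and every later step depends on that step. For $m=3$ you have $w=\partial_i u_\varepsilon$, and
\[
D^2\big(a_\varepsilon^{1/2}\big)\,w\ \ni\ a_\varepsilon^{-1/2}\,\partial_iu_\varepsilon\,D^3u_\varepsilon[\nabla u_\varepsilon],\qquad \Delta\big(a_\varepsilon^{1/2}\big)\,w\ \ni\ a_\varepsilon^{-1/2}\,\partial_iu_\varepsilon\,(\nabla u_\varepsilon,\nabla\Delta u_\varepsilon).
\]
Both terms are of size up to $a_\varepsilon^{1/2}|D^3u_\varepsilon|$, which is the quantity being estimated, and they come with an $O(1)$ coefficient. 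The induction hypothesis at this stage only gives $D^2u_\varepsilon\in L^s$. Your ``main obstacle'' check looked only at the $|D^2u_\varepsilon|^2$ part of $D^2(a_\varepsilon^{1/2})$.

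The $\Delta W$-side term acquires a factor $|p-2|$ once you insert the differentiated equation for $\nabla\Delta u_\varepsilon$. The $D^2W$-side term does not. It involves $(\nabla u_\varepsilon\cdot\nabla)D^2u_\varepsilon$ rather than $\partial_iD^2u_\varepsilon$, and its coefficient $|\nabla u_\varepsilon|^2/a_\varepsilon$ is not uniformly below $1$. So for a single $i$, neither the triangle inequality nor absorption removes it.

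What closes the case $m=3$ is to treat all components at once. Set $V:=a_\varepsilon^{1/2}\nabla u_\varepsilon$ and $\hat e:=\nabla u_\varepsilon/a_\varepsilon^{1/2}$. Up to terms bounded by $C|D^2u_\varepsilon|^2$, you have $D^2V_i=a_\varepsilon^{1/2}\big[(I+\hat e\otimes\hat e)\,D^3u_\varepsilon\big]_{i\,\cdot\,\cdot}$ and $\Delta V=a_\varepsilon^{1/2}(I+\hat e\otimes\hat e)\nabla\Delta u_\varepsilon$. Since $I+\hat e\otimes\hat e\ge I$ and $|\nabla\Delta u_\varepsilon|\le|p-2|\,|D^3u_\varepsilon|+\text{l.o.t.}$, summing Lemma \ref{Zygmund} for $\varphi V_i$ over $i$ gives $\|a_\varepsilon^{1/2}D^3u_\varepsilon\varphi\|_{L^q}\le C(n,q)|p-2|\,\|a_\varepsilon^{1/2}D^3u_\varepsilon\varphi\|_{L^q}+C$, and you absorb. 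For $m\ge4$ these terms really are of order at most $m-1$, and your step goes through. The paper's written proof also treats the corresponding summands of $R^1_{\leq \nu+1}$ and $R^4_{\leq \nu+1}$ as lower order, so it does not supply this step either.

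There are two smaller points.

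\textbf{Weights in the $f$-terms.} Dividing by $a_\varepsilon^{\frac{p-2}{2}}$ before differentiating produces, for $\boldsymbol{\gamma}=0$, the term $a_\varepsilon^{\frac{m-2}{2}}f\,D^{\boldsymbol{\nu}}\big(a_\varepsilon^{\frac{2-p}{2}}\big)$. Its leftover weight is $a_\varepsilon^{\frac{2-p}{2}}$, a negative power when $p>2$; for $m=3$ the term is of size $|p-2|\,a_\varepsilon^{\frac{2-p}{2}}|f|\,|D^2u_\varepsilon|$. So boundedness of $f$ is not enough. You can either use $|f|\le C a_\varepsilon^{\frac{p-2}{2}}|D^2u_\varepsilon|$, which follows from the equation, or proceed as the paper does: apply $D^{\boldsymbol{\nu}}$ to $a_\varepsilon^{\frac{p-2}{2}}\Delta u_\varepsilon$ and multiply by $a_\varepsilon^{\frac{m-p}{2}}$, so that $f$ enters only through $a_\varepsilon^{\frac{m-p}{2}}D^{\boldsymbol{\nu}}f$.

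\textbf{Exponents in the induction hypothesis.} The induction hypothesis does not hold in every $L^s$ at a fixed $p$. The example $|x_1|^{p'}/p'$ shows that the admissible range of $p$ shrinks as $s\to\infty$. You only need finitely many exponents, fixed by $n,m,q$. But $\mathcal{C}$ must be small enough for the lower-order estimates at those exponents as well, not only for absorbing the leading term.
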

\begin{proof}
We proceed by induction. 
The base case consists in proving that \eqref{RESULT} is true for $m=2$. This is  a consequence of Remark \ref{regolarizziamo}.
Let us now assume that \eqref{RESULT} is true for any $2\leq k \leq m-1$, and any $\tilde \Omega\subset \subset B_{2R}$, namely:
\begin{equation}\label{Inductive_Step}
    \left\|(\varepsilon+|\nabla u_\varepsilon|^2)^\frac{k-2}{2}D^{\boldsymbol{\alpha}}(D^2 u_\varepsilon)\right\|_{L^q(\tilde \Omega)}\leq C,
\end{equation}
for any $k\leq m-1$. Note that in this case $|\boldsymbol{\alpha}|=k-2$ and $C$ is a positive constant independent of $\varepsilon$ and depending on $n,m,q, \tilde \Omega,\|f\|_{S_{loc}(\Omega)},p,\|\nabla u\|_{L^\infty_{loc}(\Omega)}$ . We remark that the assumption on $f$ ensures that the solution $u_\varepsilon$ is of class $C^m$.\\
%
%
Let $x_0\in B_{2R}$ and let $R'>0$ fixed, sufficiently small. Let $\varphi \in C_c^\infty(B_{2R'}(x_0))$ such that $\varphi=1$ in $B_{R'}(x_0)$, $\varphi=0$ in $(B_{2R'}(x_0))^c$ and $|\nabla \varphi|\leq \frac{2}{R'}$ in $B_{2R'}(x_0)\setminus B_{R'}(x_0)$. For simplicity of notation, we will use $R$ instead of $R'$. Once the theorem has been proved for sufficiently small subsets, the general result follows by a covering argument, extending the estimate from radius $R'$ to $R$.\\
Let $m\geq3$ and a multi-index $\boldsymbol{\nu} = (\nu_1,...,\nu_n)$ such that $|\boldsymbol{\nu}|=m-2$. Since $(\varepsilon+|\nabla u_\varepsilon|^2)^\frac{m-2}{2} D^{\boldsymbol{\nu}} u_\varepsilon\varphi \in W^{2,q}_0(B_{2R})$, using Lemma \ref{Zygmund}, we have
\begin{equation}\label{C_Z_nu}
\begin{split}
&\left \|D^2((\varepsilon+|\nabla u_\varepsilon|^2)^\frac{m-2}{2} D^{\boldsymbol{\nu}} u_\varepsilon\varphi) \right \|_{L^q(B_{2R})}\leq C(n,q)\left \|\Delta ((\varepsilon+|\nabla u_\varepsilon|^2)^\frac{m-2}{2} D^{\boldsymbol{\nu}} u_\varepsilon\varphi)\right \|_{L^q(B_{2R})}.
\end{split}
\end{equation}
Developing the Laplacian on the right hand side of \eqref{C_Z_nu}, we get:
\begin{equation}\label{RHS_phi}
\begin{split}
    &\Delta ((\varepsilon+|\nabla u_\varepsilon|^2)^\frac{m-2}{2} D^{\boldsymbol{\nu}} u_\varepsilon\varphi)\\
    &\qquad=\Delta ((\varepsilon+|\nabla u_\varepsilon|^2)^\frac{m-2}{2} D^{\boldsymbol{\nu}} u_\varepsilon)\varphi + 2 \nabla ((\varepsilon+|\nabla u_\varepsilon|^2)^\frac{m-2}{2} D^{\boldsymbol{\nu}} u_\varepsilon)\cdot \nabla \varphi\\
    &\qquad \qquad + (\varepsilon+|\nabla u_\varepsilon|^2)^\frac{m-2}{2} D^{\boldsymbol{\nu}} u_\varepsilon\Delta\varphi
\end{split}
\end{equation}
Expanding the second order derivative on the left hand side of \eqref{C_Z_nu}, we obtain
\begin{equation}\label{LHS_phi}
    \begin{split}
        &D^2((\varepsilon+|\nabla u_\varepsilon|^2)^\frac{m-2}{2} D^{\boldsymbol{\nu}} u_\varepsilon\varphi)\\
        &\quad = D^2((\varepsilon+|\nabla u_\varepsilon|^2)^\frac{m-2}{2} D^{\boldsymbol{\nu}} u_\varepsilon)\varphi + \nabla ((\varepsilon+|\nabla u_\varepsilon|^2)^\frac{m-2}{2} D^{\boldsymbol{\nu}} u_\varepsilon) \otimes \nabla \varphi\\
        &\quad\qquad + \nabla \varphi \otimes \nabla ((\varepsilon+|\nabla u_\varepsilon|^2)^\frac{m-2}{2} D^{\boldsymbol{\nu}} u_\varepsilon) +(\varepsilon+|\nabla u_\varepsilon|^2)^\frac{m-2}{2} D^{\boldsymbol{\nu}} u_\varepsilon D^2\varphi
    \end{split}
\end{equation}
Moreover, expanding the Laplacian on the first term of the right-hand side of \eqref{RHS_phi}, we obtain
\begin{equation}\label{CZ_RHS}
    \Delta ((\varepsilon+|\nabla u_\varepsilon|^2)^\frac{m-2}{2} D^{\boldsymbol{\nu}} u_\varepsilon) = (\varepsilon+|\nabla u_\varepsilon|^2)^\frac{m-2}{2} D^{\boldsymbol{\nu}}(\Delta u_\varepsilon) + R^1_{\leq \nu+1},
\end{equation}
where,
\begin{equation}\label{LOT_1}
\begin{split}
R^1_{\leq \nu+1}&:= (m-2)(m-4) (\varepsilon+|\nabla u_\varepsilon|^2)^\frac{m-6}{2} |D^2u_\varepsilon\nabla u_\varepsilon|^2 D^{\boldsymbol{\nu}} u_\varepsilon\\
&\qquad +  (m-2) (\varepsilon+|\nabla u_\varepsilon|^2)^\frac{m-4}{2} |D^2u_\varepsilon|^2 D^{\boldsymbol{\nu}} u_\varepsilon\\
&\qquad + 2 (m-2) (\varepsilon+|\nabla u_\varepsilon|^2)^\frac{m-4}{2} (D^2u_\varepsilon\nabla u_\varepsilon, D^{\boldsymbol{\nu}}(\nabla u_\varepsilon))\\
&\qquad +(m-2) (\varepsilon+|\nabla u_\varepsilon|^2)^\frac{m-4}{2} (\nabla ( \Delta u_\varepsilon),\nabla u_\varepsilon) D^{\boldsymbol{\nu}} u_\varepsilon.
\end{split}
\end{equation}
Expanding the gradient on the second term of the right-hand side of \eqref{RHS_phi}, we get
\begin{equation}\label{CZ_RHS_2}
\begin{split}
R^2_{\leq \nu +1}&:=\nabla ((\varepsilon+|\nabla u_\varepsilon|^2)^\frac{m-2}{2} D^{\boldsymbol{\nu}} u_\varepsilon)\\
&\ = (\varepsilon+|\nabla u_\varepsilon|^2)^\frac{m-2}{2} D^{\boldsymbol{\nu}} (\nabla u_\varepsilon)\\
&\qquad +  (m-2) (\varepsilon+|\nabla u_\varepsilon|^2)^\frac{m-4}{2}(D^2u_\varepsilon\nabla u_\varepsilon) D^{\boldsymbol{\nu}} u_\varepsilon.
\end{split}
\end{equation}
Therefore, we can rewrite \eqref{RHS_phi} as follows
\begin{equation}\label{RHS_phi_LOT}
\begin{split}
    \Delta ((\varepsilon+|\nabla u_\varepsilon|^2)^\frac{m-2}{2} D^{\boldsymbol{\nu}} u_\varepsilon\varphi)
    &=(\varepsilon+|\nabla u_\varepsilon|^2)^\frac{m-2}{2} D^{\boldsymbol{\nu}}(\Delta u_\varepsilon)\varphi + R^1_{\leq \nu+1}\varphi\\
    &\qquad + 2R^2_{\leq \nu +1} \cdot \nabla \varphi+ R^3_{\leq \nu +1}\Delta\varphi,
\end{split}
\end{equation}
where,
    $R^3_{\leq \nu +1}:=(\varepsilon+|\nabla u_\varepsilon|^2)^\frac{m-2}{2} D^{\boldsymbol{\nu}} u_\varepsilon.$
Let us now deal with \eqref{LHS_phi}, expanding the second order derivative on the first term of the right hand side of \eqref{LHS_phi}, we get
\begin{equation}\label{CZ_LHS_LOT}
    D^2((\varepsilon+|\nabla u_\varepsilon|^2)^\frac{m-2}{2} D^{\boldsymbol{\nu}} u_\varepsilon) = (\varepsilon+|\nabla u_\varepsilon|^2)^\frac{m-2}{2} D^{\boldsymbol{\nu}}(D^2u_\varepsilon) + R^4_{\leq \nu +1},
\end{equation}
where,
\begin{equation}\label{LOT_4}
\begin{split}
R^4_{\leq \nu +1}
    &:= (m-2)(m-4)(\varepsilon+|\nabla u_\varepsilon|^2)^\frac{m-6}{2}(D^2u_\varepsilon\nabla u_\varepsilon)\otimes (D^2u_\varepsilon\nabla u_\varepsilon) D^{\boldsymbol{\nu}}u_\varepsilon\\
    &\qquad+ (m-2)(\varepsilon+|\nabla u_\varepsilon|^2)^\frac{m-4}{2}(D^2u_\varepsilon\nabla u_\varepsilon)\otimes D^{\boldsymbol{\nu}}(\nabla u_\varepsilon)\\
    &\qquad+ (m-2)(\varepsilon+|\nabla u_\varepsilon|^2)^\frac{m-4}{2}D^{\boldsymbol{\nu}}(\nabla u_\varepsilon)\otimes(D^2u_\varepsilon\nabla u_\varepsilon)\\
    &\qquad + (m-2)(\varepsilon+|\nabla u_\varepsilon|^2)^\frac{m-4}{2}(D^2u_\varepsilon D^2u_\varepsilon) D^{\boldsymbol{\nu}}u_\varepsilon\\
    &\qquad + (m-2)(\varepsilon+|\nabla u_\varepsilon|^2)^\frac{m-4}{2}(D^3u_\varepsilon[\nabla u_\varepsilon]) D^{\boldsymbol{\nu}}u_\varepsilon.
\end{split}
\end{equation}
Thus, \eqref{LHS_phi} can be rewritten as follows
\begin{equation}\label{LHS_phi_L}
    \begin{split}
        D^2((\varepsilon+|\nabla u_\varepsilon|^2)^\frac{m-2}{2} D^{\boldsymbol{\nu}} u_\varepsilon\varphi)
        & = (\varepsilon+|\nabla u_\varepsilon|^2)^\frac{m-2}{2} D^{\boldsymbol{\nu}}(D^2u_\varepsilon) \varphi + R^4_{\leq \nu +1}\varphi\\
        &\qquad + R^2_{\leq \nu+1} \otimes \nabla \varphi + \nabla \varphi \otimes R^2_{\leq \nu+1} + R^3_{\leq \nu+1} D^2\varphi.
    \end{split}
\end{equation}
Therefore, using a reverse triangle inequality, by \eqref{RHS_phi_LOT} and \eqref{LHS_phi_L}, the inequality \eqref{C_Z_nu} becomes
\begin{equation}\label{stima_G_N}
\begin{split}
&\left\|(\varepsilon+|\nabla u_\varepsilon|^2)^\frac{m-2}{2} D^{\boldsymbol{\nu}}(D^2u_\varepsilon)\varphi\right\|_{L^q(B_{2R})}\\
&\quad\leq C(n,q)\left\|(\varepsilon+|\nabla u_\varepsilon|^2)^\frac{m-2}{2}D^{\boldsymbol{\nu}}(\Delta u_\varepsilon)\varphi\right\|_{L^q(B_{2R})}\\
&\qquad + C(n,q,R) \left(\left\|R^1_{\leq \nu+1}\right\|_{L^q(B_{2R})} + \left\|R^2_{\leq \nu+1}\right\|_{L^q(B_{2R})}\right.\\
&\qquad\qquad\left.+ \left\|R^3_{\leq \nu+1}\right\|_{L^q(B_{2R})} + \left\|R^4_{\leq \nu+1}\right\|_{L^q(B_{2R})}\right),
\end{split}
\end{equation}
where $C(n,q,R)$ is a positive constant.\\
\textbf{Bound on lower-order terms.} First, we estimate the term $R^1_{\leq \nu+1}$.
\begin{equation}\label{SL1}
\begin{split}
   \left \|R^1_{\leq \nu+1}\right \|_{L^q(B_{2R})} &\leq C(m,n) \left(\left\||D^2u_\varepsilon|^2(\varepsilon+|\nabla u_\varepsilon|^2)^\frac{m-4}{2}D^{\boldsymbol{\nu}}u_\varepsilon\right\|_{L^q(B_{2R})} \right.\\
    &\qquad\left.+\left\||D^2u_\varepsilon|(\varepsilon+|\nabla u_\varepsilon|^2)^\frac{m-3}{2}D^{\boldsymbol{\nu}}(\nabla u_\varepsilon)\right\|_{L^q(B_{2R})}\right.\\
    &\qquad\left.+ \left\|(\varepsilon+|\nabla u_\varepsilon|^2)^\frac{1}{2}|D^3u_\varepsilon|(\varepsilon+|\nabla u_\varepsilon|^2)^\frac{m-4}{2}D^{\boldsymbol{\nu}}u_\varepsilon\right\|_{L^q(B_{2R})} \right).
\end{split}
\end{equation}
Since the three terms on the right-hand side of \eqref{SL1} can be treated in an equivalent way, we focus on the second one. Using H\"older inequalities  with exponents $(q_1,q_2)$ such that $1/q_1+1/q_2=1/q$, we obtain
\begin{equation*}
\begin{split}
    &\left\||D^2u_\varepsilon|(\varepsilon+|\nabla u_\varepsilon|^2)^\frac{m-3}{2}D^{\boldsymbol{\nu}}(\nabla u_\varepsilon)\right\|_{L^q(B_{2R})}\\
    &\qquad\leq \left\| D^2u_\varepsilon\right\|_{L^{q_1}(B_{2R})}\left\|(\varepsilon+|\nabla u_\varepsilon|^2)^\frac{m-3}{2}D^{\boldsymbol{\nu}}(\nabla u_\varepsilon)\right\|_{L^{q_2}(B_{2R})}.
\end{split}
\end{equation*}
Combining the base case with the inductive step, we obtain the existence of a range of $p$ close to $2$ such that
\begin{equation*}
    \left\||D^2u_\varepsilon|(\varepsilon+|\nabla u_\varepsilon|^2)^\frac{m-3}{2}D^{\boldsymbol{\nu}}(\nabla u_\varepsilon)\right\|_{L^q(B_{2R})} \leq C,
\end{equation*}
where $C=C(n,m,q,\|f\|_{S_{loc}(\Omega)},R,p,\|\nabla u\|_{L^\infty_{loc}(\Omega)})$ is a positive constant independent of $\varepsilon$.
Proceeding as before for the remaining two terms in \eqref{SL1}, we get
\begin{equation}\label{stima_LOT_1}
    \left \|R^1_{\leq \nu+1}\right \|_{L^q(B_{2R})} \leq \mathcal{C}(n,m,q,\|f\|_{S_{loc}(\Omega)},R,p,\|\nabla u\|_{L^\infty_{loc}(\Omega)}),
\end{equation}
where $\mathcal{C}(n,m,q,\|f\|_{S_{loc}(\Omega)},R,p,\|\nabla u\|_{L^\infty_{loc}(\Omega)})$ is a positive constant.\\
Note that the the term $R^4_{\leq \nu+1}$ is similarly estimated.
We proceed by estimating the term $R^2_{\leq \nu+1}$.
\begin{equation}\label{SL2}
\begin{split}
   \left \|R^2_{\leq \nu+1}\right \|_{L^q(B_{2R})} &\leq \left\|(\varepsilon+|\nabla u_\varepsilon|^2)^\frac{m-2}{2}D^{\boldsymbol{\nu}}(\nabla u_\varepsilon)\right\|_{L^q(B_{2R})}\\
    &\qquad + (m-2) \left\||D^2u_\varepsilon||\nabla u_\varepsilon|(\varepsilon+|\nabla u_\varepsilon|^2)^\frac{m-4}{2}D^{\boldsymbol{\nu}} u_\varepsilon\right\|_{L^q(B_{2R})}.
\end{split}
\end{equation}
Again, using H\"older inequalities with exponents $(q_1,q_2)$ such that $1/q_1+1/q_2=1/q$ together with the fact that $|\nabla u_\varepsilon|\leq C$ independent of $\varepsilon$, see \cite[Theorem 1.7]{L2} or \cite{Ant2} or \cite[Theorem $2.1$]{Cma2}, we conclude that
\begin{equation}\label{stima_LOT_2}
    \left \|R^2_{\leq \nu+1}\right \|_{L^q(B_{2R})} \leq \mathcal{C}(n,m,q,\|f\|_{S_{loc}(\Omega)},R,p,\|\nabla u\|_{L^\infty_{loc}(\Omega)}).
\end{equation}
The same bound holds true for $R^3_{\leq \nu+1}$.
Combining the estimates \eqref{stima_LOT_1} and \eqref{stima_LOT_2}, we rewrite \eqref{stima_G_N} in the following way
\begin{equation}\label{stima_N_LOT}
\begin{split}
&\left\|(\varepsilon+|\nabla u_\varepsilon|^2)^\frac{m-2}{2} D^{\boldsymbol{\nu}}(D^2u_\varepsilon)\varphi\right\|_{L^q(B_{2R})}\\
&\quad\leq C(n,q)\left\|(\varepsilon+|\nabla u_\varepsilon|^2)^\frac{m-2}{2}D^{\boldsymbol{\nu}}(\Delta u_\varepsilon)\varphi\right\|_{L^q(B_{2R})}\\
&\quad\qquad + \mathcal{C}(n,m,q,\|f\|_{S_{loc}(\Omega)},R,p,\|\nabla u\|_{L^\infty_{loc}(\Omega)}).
\end{split}
\end{equation}
\textbf{Dominant term.} 
We aim now to estimate the dominant term given by a weighted $m$-th order derivative of $u_\varepsilon$ solution of the regularized problem \eqref{eq:problregol}. Taking $D^\nu$ on both sides of \eqref{eq:problregol} we have
\begin{equation}\label{pEq}
- D^{\boldsymbol{\nu}} f 
=D^{\boldsymbol{\nu}} ((\varepsilon+|\nabla u_\varepsilon|^2)^\frac{p-2}{2}\Delta u_\varepsilon) + (p-2) D^{\boldsymbol{\nu}} [(\varepsilon+|\nabla u_\varepsilon|^2)^\frac{p-4}{2} (D^2u_\varepsilon \nabla u_\varepsilon,\nabla u_\varepsilon)].
\end{equation}
Using the Leibniz formula on the first term of the right-hand side of \eqref{pEq} we get
\begin{equation}\label{Acca}
\begin{split}
&D^{\boldsymbol{\nu}} ((\varepsilon+|\nabla u_\varepsilon|^2)^\frac{p-2}{2}\Delta u_\varepsilon)\\
&\quad= \sum_{\boldsymbol{\gamma}\leq \boldsymbol{\nu}} \tbinom{\boldsymbol{\nu}}{\boldsymbol{\gamma}} D^{\boldsymbol{\gamma}}\left((\varepsilon+|\nabla u_\varepsilon|^2)^\frac{p-2}{2}\right) D^{\boldsymbol{\nu}-\boldsymbol{\gamma}}(\Delta u_\varepsilon)\\
&\quad= (\varepsilon+|\nabla u_\varepsilon|^2)^\frac{p-2}{2} D^{\boldsymbol{\nu}}(\Delta u_\varepsilon) + \sum_{0<\boldsymbol{\gamma}\leq \boldsymbol{\nu}} \tbinom{\boldsymbol{\nu}}{\boldsymbol{\gamma}} D^{\boldsymbol{\gamma}}\left((\varepsilon+|\nabla u_\varepsilon|^2)^\frac{p-2}{2}\right) D^{\boldsymbol{\nu}-\boldsymbol{\gamma}}(\Delta u_\varepsilon),
\end{split}
\end{equation}
where the notation $\boldsymbol{\gamma} > 0$ stands for $|\boldsymbol{\gamma}|>0$.\\
Multiplying the last equation by $(\varepsilon+|\nabla u_\varepsilon|^2)^\frac{m-2-(p-2)}{2}$, we obtain
\begin{equation}\label{m_Der}
\begin{split}
&(\varepsilon+|\nabla u_\varepsilon|^2)^\frac{m-2}{2} D^{\boldsymbol{\nu}}(\Delta u_\varepsilon)\\
&\quad=(\varepsilon+|\nabla u_\varepsilon|^2)^\frac{m-p}{2} D^{\boldsymbol{\nu}} ((\varepsilon+|\nabla u_\varepsilon|^2)^\frac{p-2}{2}\Delta u_\varepsilon)\\
&\quad\qquad - (\varepsilon+|\nabla u_\varepsilon|^2)^\frac{m-p}{2}\sum_{0<\boldsymbol{\gamma}\leq \boldsymbol{\nu}} \tbinom{\boldsymbol{\nu}}{\boldsymbol{\gamma}}D^{\boldsymbol{\gamma}}\left((\varepsilon+|\nabla u_\varepsilon|^2)^\frac{p-2}{2}\right) D^{\boldsymbol{\nu}-\boldsymbol{\gamma}}(\Delta u_\varepsilon)\\
&\quad=(\varepsilon+|\nabla u_\varepsilon|^2)^\frac{m-p}{2} D^{\boldsymbol{\nu}} ((\varepsilon+|\nabla u_\varepsilon|^2)^\frac{p-2}{2}\Delta u_\varepsilon)\\
&\quad\qquad - \sum_{0<\boldsymbol{\gamma}\leq \boldsymbol{\nu}} \tbinom{\boldsymbol{\nu}}{\boldsymbol{\gamma}}\left ( (\varepsilon+|\nabla u_\varepsilon|^2)^\frac{2-p+|\boldsymbol{\gamma}|}{2}D^{\boldsymbol{\gamma}}\left((\varepsilon+|\nabla u_\varepsilon|^2)^\frac{p-2}{2}\right) \right )\\
&\qquad\qquad\quad\times(\varepsilon+|\nabla u_\varepsilon|^2)^\frac{m-2-|\boldsymbol{\gamma}|}{2}D^{\boldsymbol{\nu}-\boldsymbol{\gamma}}(\Delta u_\varepsilon)
\end{split}
\end{equation}
To proceed, we expand $D^{\boldsymbol{\gamma}}\left((\varepsilon+|\nabla u_\varepsilon|^2)^\frac{p-2}{2}\right)$ exploiting a multivariate version of the Faà di Bruno formula  (see Theorem \ref{FAAC}), applied to the composition  $g_1(g_2(x))$, where $g_1(t):=(\varepsilon+t)^{(p-2)/2}$ and $g_2(x):=|\nabla u_\varepsilon|^2$. In particular, we have 
\begin{equation}\label{Bruno_1}
\begin{split}
&(\varepsilon+|\nabla u_\varepsilon|^2)^\frac{2-p+|\boldsymbol{\gamma}|}{2}D^{\boldsymbol{\gamma}}\left((\varepsilon+|\nabla u_\varepsilon|^2)^\frac{p-2}{2}\right)\\
&\qquad=(\varepsilon+|\nabla u_\varepsilon|^2)^\frac{2-p+|\boldsymbol{\gamma}|}{2}\sum_{r=1}^{|\gamma|}c_1(\varepsilon+|\nabla u_\varepsilon|^2)^{\frac{p-2}{2}-r} \sum_{p(\boldsymbol{\gamma},r)}(\boldsymbol{\gamma}!) \prod_{j=1}^{|\boldsymbol{\gamma}|}\frac{\left( D^{\boldsymbol{l}_j}(|\nabla u_\varepsilon|^2)\right)^{k_j}}{c_2(j)},
\end{split}
\end{equation}
where $c_1=  \prod_{h=1}^r \frac{(p-2h)}{2}$, $c_2(j)= k_j! (\boldsymbol{l}_j !)^{k_j}$ and
\begin{equation*}
\begin{split}
p(\boldsymbol{\gamma},r) = \{&(k_1,...,k_{|\boldsymbol{\gamma}|};\boldsymbol{l}_1,...,\boldsymbol{l}_{|\boldsymbol{\gamma}|}): \text{for some }1\leq s \leq |\boldsymbol{\gamma}|,\ k_i=0 \text{ and } \boldsymbol{l}_i = \boldsymbol{0}\\
&\quad\text{ for } 1\leq i\leq |\boldsymbol{\gamma}|-s;\ k_i > 0 \text{ for } |\boldsymbol{\gamma}| - s+1 \leq i \leq |\boldsymbol{\gamma}|;\\
&\qquad \text{ and } 0 \prec \boldsymbol{l}_{|\boldsymbol{\gamma}|-s+1} \prec \cdots\prec \boldsymbol{l}_{|\boldsymbol{\gamma}|} \text{ are such that }\sum_{i=1}^{|\boldsymbol{\gamma}|} k_i = r, \sum_{i=1}^{|\boldsymbol{\gamma}|} k_i \boldsymbol{l}_i = \boldsymbol{\gamma}\}.
\end{split}
\end{equation*}
Using the constraints $\sum_{i=1}^{|\boldsymbol{\gamma}|} k_i = r$ and $\sum_{i=1}^{|\boldsymbol{\gamma}|} k_i \boldsymbol{l}_i = \boldsymbol{\gamma}$, we can rewrite \eqref{Bruno_1} as follows:
\begin{equation}\label{After_c_1}
\begin{split}
&(\varepsilon+|\nabla u_\varepsilon|^2)^\frac{2-p+|\boldsymbol{\gamma}|}{2}D^{\boldsymbol{\gamma}}\left((\varepsilon+|\nabla u_\varepsilon|^2)^\frac{p-2}{2}\right)\\
&\quad=\sum_{r=1}^{|\gamma|}c_1 \sum_{p(\boldsymbol{\gamma},r)}(\boldsymbol{\gamma}!)(\varepsilon+|\nabla u_\varepsilon|^2)^{\frac{|\boldsymbol{\gamma}|}{2}-\sum_{i=1}^{|\boldsymbol{\gamma}|} k_i}\prod_{j=1}^{|\boldsymbol{\gamma}|}\frac{\left( D^{\boldsymbol{l}_j}(|\nabla u_\varepsilon|^2)\right)^{k_j}}{c_2(j)}\\
&\quad=\sum_{r=1}^{|\gamma|}c_1 \sum_{p(\boldsymbol{\gamma},r)}(\boldsymbol{\gamma}!) (\varepsilon+|\nabla u_\varepsilon|^2)^{\frac{\sum_{i=1}^{|\boldsymbol{\gamma}|} k_i |\boldsymbol{l}_i|}{2}-\sum_{i=1}^{|\boldsymbol{\gamma}|} k_i}\prod_{j=1}^{|\boldsymbol{\gamma}|}\frac{\left( D^{\boldsymbol{l}_j}(|\nabla u_\varepsilon|^2)\right)^{k_j}}{c_2(j)}\\
&\quad=\sum_{r=1}^{|\gamma|}c_1 \sum_{p(\boldsymbol{\gamma},r)}(\boldsymbol{\gamma}!) \prod_{j=1}^{|\boldsymbol{\gamma}|}\frac{\left( (\varepsilon+|\nabla u_\varepsilon|^2)^{\frac{|\boldsymbol{l}_j|}{2}-1}D^{\boldsymbol{l}_j}(|\nabla u_\varepsilon|^2)\right)^{k_j}}{c_2(j)}.
\end{split}
\end{equation}
Moreover, using the Leibniz formula,
\begin{equation}\label{L_M}
\begin{split}
    D^{\boldsymbol{l}_j}(|\nabla u_\varepsilon|^2) &= D^{\boldsymbol{l}_j}((\nabla u_\varepsilon,\nabla u_\varepsilon))=\sum_{\boldsymbol{\beta}\leq \boldsymbol{l}_j} \tbinom{\boldsymbol{l}_j}{\boldsymbol{\beta}} \left(D^{\boldsymbol{\beta}}(\nabla u_\varepsilon),D^{\boldsymbol{l}_j-\boldsymbol{\beta}}(\nabla u_\varepsilon)\right).
\end{split}
\end{equation}
Therefore, using \eqref{L_M}, we can rewrite \eqref{After_c_1} as follows:
\begin{equation}\label{stima_F}
\begin{split}
&(\varepsilon+|\nabla u_\varepsilon|^2)^\frac{2-p+|\boldsymbol{\gamma}|}{2}D^{\boldsymbol{\gamma}}\left((\varepsilon+|\nabla u_\varepsilon|^2)^\frac{p-2}{2}\right)\\
&\quad=\sum_{r=1}^{|\gamma|}c_1 \sum_{p(\boldsymbol{\gamma},r)}(\boldsymbol{\gamma}!) \prod_{j=1}^{|\boldsymbol{\gamma}|}\frac{1}{c_2(j)}\\
&\qquad \quad\times\left(
\sum_{\boldsymbol{\beta}\leq \boldsymbol{l}_j} \tbinom{\boldsymbol{l}_j}{\boldsymbol{\beta}}
\left((\varepsilon+|\nabla u_\varepsilon|^2)^{\frac{|\boldsymbol{\beta}|-1}{2}}D^{\boldsymbol{\beta}}(\nabla u_\varepsilon),(\varepsilon+|\nabla u_\varepsilon|^2)^{\frac{|\boldsymbol{l}_j|-|\boldsymbol{\beta}|-1}{2}}D^{\boldsymbol{l}_j-\boldsymbol{\beta}}(\nabla u_\varepsilon)\right)\right)^{k_j}.
\end{split}
\end{equation}
In particular, using the previous equation, we can estimate \eqref{After_c_1} as follows
\begin{equation}\label{stima_F_M}
\begin{split}
&(\varepsilon+|\nabla u_\varepsilon|^2)^\frac{2-p+|\boldsymbol{\gamma}|}{2}\left|D^{\boldsymbol{\gamma}}\left((\varepsilon+|\nabla u_\varepsilon|^2)^\frac{p-2}{2}\right)\right|\\
&\quad\leq\sum_{r=1}^{|\gamma|}c_1 \sum_{p(\boldsymbol{\gamma},r)}(\boldsymbol{\gamma}!) \prod_{j=1}^{|\boldsymbol{\gamma}|}\frac{1}{c_2(j)}C(|\boldsymbol{l}_j|)\\
&\qquad \times
\sum_{\boldsymbol{\beta}\leq \boldsymbol{l}_j} \tbinom{\boldsymbol{l}_j}{\boldsymbol{\beta}}^{k_j}
\left|(\varepsilon+|\nabla u_\varepsilon|^2)^{\frac{|\boldsymbol{\beta}|-1}{2}}D^{\boldsymbol{\beta}}(\nabla u_\varepsilon)\right|^{k_j}\left|(\varepsilon+|\nabla u_\varepsilon|^2)^{\frac{|\boldsymbol{l}_j|-|\boldsymbol{\beta}|-1}{2}}D^{\boldsymbol{l}_j-\boldsymbol{\beta}}(\nabla u_\varepsilon)\right|^{k_j},
\end{split}
\end{equation}
where $C(|\boldsymbol{l}_j|)=\max\{1,|\boldsymbol{l}_j|^{k_j-1}\}$.\\
We now deal with the first term on the right hand side of \eqref{stima_N_LOT}. Using \eqref{pEq}, \eqref{Acca} and \eqref{m_Der} we obtain
\begin{equation}\label{m_Der_e_pEq}
\begin{split}
&\Big\|(\varepsilon+|\nabla u_\varepsilon|^2)^\frac{m-2}{2} D^{\boldsymbol{\nu}}(\Delta u_\varepsilon)\varphi\Big\|_{L^q(B_{2R})}\\
%
&\quad\leq \Big\|\varphi D^{\boldsymbol{\nu}}f(\varepsilon+|\nabla u_\varepsilon|^2)^\frac{m-p}{2} \Big\|_{L^q(B_{2R})}\\
&\quad\qquad+ |p-2| \big\| (\varepsilon+|\nabla u_\varepsilon|^2)^\frac{m-p}{2} D^{\boldsymbol{\nu}} [ (\varepsilon+|\nabla u_\varepsilon|^2)^\frac{p-4}{2} (D^2u_\varepsilon \nabla u_\varepsilon,\nabla u_\varepsilon)]\varphi\big\|_{L^q(B_{2R})}\\
&\quad\qquad + \sum_{0<\boldsymbol{\gamma}\leq \boldsymbol{\nu}} \tbinom{\boldsymbol{\nu}}{\boldsymbol{\gamma}}\Big\|\Big (  (\varepsilon+|\nabla u_\varepsilon|^2)^\frac{2-p+|\boldsymbol{\gamma}|}{2}D^{\boldsymbol{\gamma}}\Big((\varepsilon+|\nabla u_\varepsilon|^2)^\frac{p-2}{2}\Big) \Big )\\
&\qquad\qquad\qquad \times(\varepsilon+|\nabla u_\varepsilon|^2)^\frac{m-2-|\boldsymbol{\gamma}|}{2}D^{\boldsymbol{\nu}-\boldsymbol{\gamma}}(\Delta u_\varepsilon)\varphi\Big\|_{L^q(B_{2R})}\\
&\quad=: I_1 + |p-2|I_2 + \sum_{0<\boldsymbol{\gamma}\leq \boldsymbol{\nu}} \tbinom{\boldsymbol{\nu}}{\boldsymbol{\gamma}} I_3,
\end{split}
\end{equation}
where we used the standard triangular inequality.
Let us estimate the terms $I_1$, $I_2$, $I_3$ separately. We start with $I_3$, using the H\"older inequality with exponents $(q_1,q_2)$, $1/q_1+1/q_2=1/q$; we have
\begin{equation}\label{I_3}
\begin{split}
    I_3 &\leq \Big\| (\varepsilon+|\nabla u_\varepsilon|^2)^\frac{2-p+|\boldsymbol{\gamma}|}{2}D^{\boldsymbol{\gamma}}\left((\varepsilon+|\nabla u_\varepsilon|^2)^\frac{p-2}{2}\right) \Big\|_{L^{q_1}(B_{2R})} \\
    &\quad\qquad \times\Big \| (\varepsilon+|\nabla u_\varepsilon|^2)^\frac{m-2-|\boldsymbol{\gamma}|}{2}D^{\boldsymbol{\nu}-\boldsymbol{\gamma}}(\Delta u_\varepsilon)\Big\|_{L^{q_2}(B_{2R})}.
\end{split}
\end{equation}
Using \eqref{stima_F_M} and iterated H\"older inequalities we can estimate the first term on the right hand side of the previous equation, namely
\begin{equation}\label{Faa_F}
\begin{split}
&\Big\| (\varepsilon+|\nabla u_\varepsilon|^2)^\frac{2-p+|\boldsymbol{\gamma}|}{2}D^{\boldsymbol{\gamma}}\left((\varepsilon+|\nabla u_\varepsilon|^2)^\frac{p-2}{2}\right) \Big\|_{L^{q_1}(B_{2R})}\\  
&\ \leq \sum_{r=1}^{|\gamma|}c_1 \sum_{p(\boldsymbol{\gamma},r)}(\boldsymbol{\gamma}!) \Big\|\prod_{j=1}^{|\boldsymbol{\gamma}|}\frac{1}{c_2(j)}C(|\boldsymbol{l}_j|)\sum_{\boldsymbol{\beta}\leq \boldsymbol{l}_j} \tbinom{\boldsymbol{l}_j}{\boldsymbol{\beta}}^{k_j}\\
&\quad \times
\left|(\varepsilon+|\nabla u_\varepsilon|^2)^{\frac{|\boldsymbol{\beta}|-1}{2}}D^{\boldsymbol{\beta}}(\nabla u_\varepsilon)\right|^{k_j}\left|(\varepsilon+|\nabla u_\varepsilon|^2)^{\frac{|\boldsymbol{l}_j|-|\boldsymbol{\beta}|-1}{2}}D^{\boldsymbol{l}_j-\boldsymbol{\beta}}(\nabla u_\varepsilon)\right|^{k_j}\Big\|_{L^{q_1}(B_{2R})}\\
&\ \leq \sum_{r=1}^{|\gamma|}c_1 \sum_{p(\boldsymbol{\gamma},r)}(\boldsymbol{\gamma}!) \prod_{j=1}^{|\boldsymbol{\gamma}|}\frac{1}{c_2(j)}C(|\boldsymbol{l}_j|)\sum_{\boldsymbol{\beta}\leq \boldsymbol{l}_j} \tbinom{\boldsymbol{l}_j}{\boldsymbol{\beta}}^{k_j}\\
&\quad \times
\Big\|\left|(\varepsilon+|\nabla u_\varepsilon|^2)^{\frac{|\boldsymbol{\beta}|-1}{2}}D^{\boldsymbol{\beta}}(\nabla u_\varepsilon)\right|^{k_j}\left|(\varepsilon+|\nabla u_\varepsilon|^2)^{\frac{|\boldsymbol{l}_j|-|\boldsymbol{\beta}|-1}{2}}D^{\boldsymbol{l}_j-\boldsymbol{\beta}}(\nabla u_\varepsilon)\right|^{k_j}\Big\|_{L^{p_j}(B_{2R})}\\
&\ \leq \sum_{r=1}^{|\gamma|}c_1 \sum_{p(\boldsymbol{\gamma},r)}(\boldsymbol{\gamma}!) \prod_{j=1}^{|\boldsymbol{\gamma}|}\frac{1}{c_2(j)}C(|\boldsymbol{l}_j|)\sum_{\boldsymbol{\beta}\leq \boldsymbol{l}_j} \tbinom{\boldsymbol{l}_j}{\boldsymbol{\beta}}^{k_j}\\
&\quad \times
\Big\|(\varepsilon+|\nabla u_\varepsilon|^2)^{\frac{|\boldsymbol{\beta}|-1}{2}}D^{\boldsymbol{\beta}}(\nabla u_\varepsilon)\Big\|^{k_j}_{L^{k_jp_j^1}(B_{2R})} \Big\|(\varepsilon+|\nabla u_\varepsilon|^2)^{\frac{|\boldsymbol{l}_j|-|\boldsymbol{\beta}|-1}{2}}D^{\boldsymbol{l}_j-\boldsymbol{\beta}}(\nabla u_\varepsilon)\Big\|^{k_j}_{L^{k_jp_j^2}(B_{2R})},
\end{split}
\end{equation}
where $\{p_j\}_{j=1}^{|\boldsymbol{\gamma}|}$ and $(p_j^1,p_j^2)$ are such that
   $ \sum_{j=1}^{|\boldsymbol{\gamma}|} \frac{1}{p_j}=\frac{1}{q_1}$  and  $\frac{1}{p_j^1}+\frac{1}{p_j^2}=\frac{1}{p_j}.$ 
This allows us to conclude that the term $I_3$ consists entirely of products of lower-order terms. Thus, by the base and the inductive steps we obtain:
\begin{equation}\label{STIMA_I3}
    I_3 \leq C_3(n,m,q,\|f\|_{S_{loc}(\Omega)},p,\|\nabla u\|_{L^\infty_{loc}(\Omega)}),
\end{equation}
where $C_3(n,m,q,\|f\|_{S_{loc}(\Omega)},p,\|\nabla u\|_{L^\infty_{loc}(\Omega)})$ is a positive constant independent of $\varepsilon$.\\
Concerning $I_1$, by assumptions on $f$, we have
\begin{equation}\label{I_1}
\begin{split}
I_1 &= \left\|D^{\boldsymbol{\nu}}f (\varepsilon+|\nabla u_\varepsilon|^2)^\frac{m-p}{2}\varphi \right\|_{L^q(B_{2R})}\leq C(m,p,\|\nabla u\|_{L^\infty_{loc}(\Omega)} \|D^{\boldsymbol{\nu}}f \|_{L^q(B_{2R})}).
\end{split}
\end{equation}
The remaining term is $I_2$. We follow the same approach used for $I_3$, i.e. applying both the Leibniz and the multivariate version of the Faà di Bruno formulas. First of all, applying the Leibniz formula we obtain:
\begin{equation}\label{L_2}
    \begin{split}
        &D^{\boldsymbol{\nu}} [ (\varepsilon+|\nabla u_\varepsilon|^2)^\frac{p-4}{2} (D^2u_\varepsilon \nabla u_\varepsilon,\nabla u_\varepsilon)]= (\varepsilon+|\nabla u_\varepsilon|^2)^\frac{p-4}{2} D^{\boldsymbol{\nu}}(D^2u_\varepsilon \nabla u_\varepsilon,\nabla u_\varepsilon)\\
        &\quad\qquad+\sum_{0<\boldsymbol{\gamma}\leq \boldsymbol{\nu}} \tbinom{\boldsymbol{\nu}}{\boldsymbol{\gamma}} D^{\boldsymbol{\gamma}}\left((\varepsilon+|\nabla u_\varepsilon|^2)^\frac{p-4}{2}\right) D^{\boldsymbol{\nu}-\boldsymbol{\gamma}}(D^2u_\varepsilon\nabla u_\varepsilon, \nabla u_\varepsilon) = : J_1+J_2.
    \end{split}
\end{equation}
The second term of \eqref{L_2}, again using the Leibniz formula, can be rewritten as follows
\begin{equation}\label{J_2}
\begin{split}
J_2&=\sum_{0<\boldsymbol{\gamma}\leq \boldsymbol{\nu}} \tbinom{\boldsymbol{\nu}}{\boldsymbol{\gamma}} D^{\boldsymbol{\gamma}}\left((\varepsilon+|\nabla u_\varepsilon|^2)^\frac{p-4}{2}\right)  \sum_{\boldsymbol{\sigma}\leq \boldsymbol{\nu}-\boldsymbol{\gamma}} \tbinom{\boldsymbol{\nu}-\boldsymbol{\gamma}}{\boldsymbol{\sigma}}\left(D^{\boldsymbol{\sigma}}(D^2u_\varepsilon\nabla u_\varepsilon), D^{\boldsymbol{\nu}-\boldsymbol{\gamma}- \boldsymbol{\sigma}}(\nabla u_\varepsilon)\right)\\
& = \sum_{0<\boldsymbol{\gamma}\leq \boldsymbol{\nu}} \tbinom{\boldsymbol{\nu}}{\boldsymbol{\gamma}} D^{\boldsymbol{\gamma}}\left((\varepsilon+|\nabla u_\varepsilon|^2)^\frac{p-4}{2}\right) \sum_{\boldsymbol{\sigma}\leq \boldsymbol{\nu}-\boldsymbol{\gamma}} \sum_{\boldsymbol{\mu}\leq \boldsymbol{\sigma}} \mathcal{C}    \left(D^{\boldsymbol{\mu}}(D^2u_\varepsilon)D^{\boldsymbol{\sigma}-\boldsymbol{\mu}}(\nabla u_\varepsilon), D^{\boldsymbol{\nu}-\boldsymbol{\gamma}- \boldsymbol{\sigma}}(\nabla u_\varepsilon)\right),
\end{split}
\end{equation}
where $\mathcal{C} := \tbinom{\boldsymbol{\nu}-\boldsymbol{\gamma}}{\boldsymbol{\sigma}}\tbinom{\boldsymbol{\sigma}}{\boldsymbol{\mu}}$.We now deal with $J_1$,
\begin{equation}\label{Leading}
    \begin{split}
        J_1&= (\varepsilon+|\nabla u_\varepsilon|^2)^\frac{p-4}{2} \left(D^{\boldsymbol{\nu}}(D^2u_\varepsilon \nabla u_\varepsilon),\nabla u_\varepsilon\right)\\
        &\qquad+(\varepsilon+|\nabla u_\varepsilon|^2)^\frac{p-4}{2}\sum_{\boldsymbol{\sigma}<\boldsymbol{\nu}} \tbinom{\boldsymbol{\nu}}{\boldsymbol{\sigma}}  \left(D^{\boldsymbol{\sigma}}(D^2u_\varepsilon\nabla u_\varepsilon), D^{\boldsymbol{\nu}-\boldsymbol{\sigma}}(\nabla u_\varepsilon)\right)\\
        &=(\varepsilon+|\nabla u_\varepsilon|^2)^\frac{p-4}{2} \left(D^{\boldsymbol{\nu}}(D^2u_\varepsilon) \nabla u_\varepsilon,\nabla u_\varepsilon\right)\\
        &\qquad+(\varepsilon+|\nabla u_\varepsilon|^2)^\frac{p-4}{2}\sum_{\boldsymbol{\mu}<\boldsymbol{\nu}} \tbinom{\boldsymbol{\nu}}{\boldsymbol{\mu}}  \left(D^{\boldsymbol{\mu}}(D^2u_\varepsilon)D^{\boldsymbol{\nu}-\boldsymbol{\mu}}(\nabla u_\varepsilon), \nabla u_\varepsilon\right)\\
        &\qquad+ (\varepsilon+|\nabla u_\varepsilon|^2)^\frac{p-4}{2}\sum_{\boldsymbol{\sigma} < \boldsymbol{\nu}} \sum_{\boldsymbol{\mu} \leq \boldsymbol{\sigma}} \tbinom{\boldsymbol{\nu}}{\boldsymbol{\sigma}} \tbinom{\boldsymbol{\sigma}}{\boldsymbol{\mu}} \left(D^{\boldsymbol{\mu}}(D^2u_\varepsilon)D^{\boldsymbol{\sigma}-\boldsymbol{\mu}}(\nabla u_\varepsilon), D^{\boldsymbol{\nu}- \boldsymbol{\sigma}}(\nabla u_\varepsilon)\right).
    \end{split}
\end{equation}
Therefore we bound $I_2$ as follows
\begin{equation}\label{I_2_q}
\begin{split}
    I_2&= \Big\| (\varepsilon+|\nabla u_\varepsilon|^2)^\frac{m-p}{2} D^{\boldsymbol{\nu}} [ (\varepsilon+|\nabla u_\varepsilon|^2)^\frac{p-4}{2} (D^2u_\varepsilon \nabla u_\varepsilon,\nabla u_\varepsilon)]\varphi\Big\|_{L^q(B_{2R})}\\
    &\leq \Big\| (\varepsilon+|\nabla u_\varepsilon|^2)^\frac{m-p}{2} J_1 \varphi\Big\|_{L^q(B_{2R})} + \Big\| (\varepsilon+|\nabla u_\varepsilon|^2)^\frac{m-p}{2} J_2 \varphi\Big\|_{L^q(B_{2R})}
\end{split}
\end{equation}
Using \eqref{Leading}, the first term of the right-hand side of \eqref{I_2_q} becomes
\begin{equation*}
    \begin{split}
        &(\varepsilon+|\nabla u_\varepsilon|^2)^\frac{m-p}{2}J_1\\
        &\quad= \left((\varepsilon+|\nabla u_\varepsilon|^2)^\frac{m-2}{2}D^{\boldsymbol{\nu}}(D^2u_\varepsilon) \frac{\nabla u_\varepsilon}{(\varepsilon+|\nabla u_\varepsilon|^2)^\frac{1}{2}},\frac{\nabla u_\varepsilon}{(\varepsilon+|\nabla u_\varepsilon|^2)^\frac{1}{2}}\right)\\
        &\qquad+\sum_{\boldsymbol{\mu}<\boldsymbol{\nu}} \tbinom{\boldsymbol{\nu}}{\boldsymbol{\mu}}  \left((\varepsilon+|\nabla u_\varepsilon|^2)^\frac{|\boldsymbol{\mu}|}{2}D^{\boldsymbol{\mu}}(D^2u_\varepsilon)(\varepsilon+|\nabla u_\varepsilon|^2)^\frac{m-3-|\boldsymbol{\mu}|}{2}D^{\boldsymbol{\nu}-\boldsymbol{\mu}}(\nabla u_\varepsilon), \right.\\
        &\qquad\qquad\qquad\left. , \frac{\nabla u_\varepsilon}{(\varepsilon+|\nabla u_\varepsilon|^2)^\frac{1}{2}}\right)\\
        &\qquad+\sum_{\boldsymbol{\sigma} < \boldsymbol{\nu}} \sum_{\boldsymbol{\mu} \leq \boldsymbol{\sigma}} \tbinom{\boldsymbol{\nu}}{\boldsymbol{\sigma}} \tbinom{\boldsymbol{\sigma}}{\boldsymbol{\mu}}\left( (\varepsilon+|\nabla u_\varepsilon|^2)^\frac{|\boldsymbol{\mu}|}{2}D^{\boldsymbol{\mu}}(D^2u_\varepsilon)(\varepsilon+|\nabla u_\varepsilon|^2)^\frac{|\boldsymbol{\sigma}|-|\boldsymbol{\mu}|-1}{2}D^{\boldsymbol{\sigma}-\boldsymbol{\mu}}(\nabla u_\varepsilon),\right.\\
        &\qquad\qquad\qquad\left.,(\varepsilon+|\nabla u_\varepsilon|^2)^\frac{m-3-|\boldsymbol{\sigma}|}{2}D^{\boldsymbol{\nu}- \boldsymbol{\sigma}}(\nabla u_\varepsilon)\right).
    \end{split}
\end{equation*}
In particular, using triangular inequality, we get
\begin{equation}\label{LOT_5}
    \begin{split}
        \left\|(\varepsilon+|\nabla u_\varepsilon|^2)^\frac{m-p}{2}J_1\varphi\right\|_{L^q(B_{2R})}
        \leq \left\|(\varepsilon+|\nabla u_\varepsilon|^2)^\frac{m-2}{2}D^{\boldsymbol{\nu}}(D^2u_\varepsilon)\varphi\right\|_{L^q(B_{2R})} + R^5_{\leq \nu + 1},
    \end{split}
\end{equation}
where,
\begin{equation*}
\begin{split}
    R^5_{\leq \nu + 1} : = &\sum_{\boldsymbol{\mu}<\boldsymbol{\nu}} \tbinom{\boldsymbol{\nu}}{\boldsymbol{\mu}}  \left\|(\varepsilon+|\nabla u_\varepsilon|^2)^\frac{|\boldsymbol{\mu}|}{2}|D^{\boldsymbol{\mu}}(D^2u_\varepsilon)|(\varepsilon+|\nabla u_\varepsilon|^2)^\frac{m-3-|\boldsymbol{\mu}|}{2}|D^{\boldsymbol{\nu}-\boldsymbol{\mu}}(\nabla u_\varepsilon)|\right\|_{L^q(B_{2R})}\\
        &\quad+\sum_{\boldsymbol{\sigma} < \boldsymbol{\nu}} \sum_{\boldsymbol{\mu} \leq \boldsymbol{\sigma}} \tbinom{\boldsymbol{\nu}}{\boldsymbol{\sigma}} \tbinom{\boldsymbol{\sigma}}{\boldsymbol{\mu}}\\
        &\ \qquad\times\left\| (\varepsilon+|\nabla u_\varepsilon|^2)^\frac{|\boldsymbol{\mu}|}{2}|D^{\boldsymbol{\mu}}(D^2u_\varepsilon)|(\varepsilon+|\nabla u_\varepsilon|^2)^\frac{|\boldsymbol{\sigma}|-|\boldsymbol{\mu}|-1}{2}|D^{\boldsymbol{\sigma}-\boldsymbol{\mu}}(\nabla u_\varepsilon)|\right.\\
        &\qquad\quad\qquad \left. \times (\varepsilon+|\nabla u_\varepsilon|^2)^\frac{m-3-|\boldsymbol{\sigma}|}{2}|D^{\boldsymbol{\nu}- \boldsymbol{\sigma}}(\nabla u_\varepsilon)|\right\|_{L^q(B_{2R})}.
\end{split}
\end{equation*}
Using H\"older inequalities with exponents $1/{p_1}+1/{p_2}=1/q$ and $1/{r_1}+1/{r_2}+1/{r_3}=1/q$, we have
\begin{equation}\label{stima_Lot_5}
    \begin{split}
        R^5_{\leq \nu + 1} &\leq \sum_{\boldsymbol{\mu}<\boldsymbol{\nu}} \tbinom{\boldsymbol{\nu}}{\boldsymbol{\mu}}  \left\|(\varepsilon+|\nabla u_\varepsilon|^2)^\frac{|\boldsymbol{\mu}|}{2}D^{\boldsymbol{\mu}}(D^2u_\varepsilon)\right\|_{L^{p_1}(B_{2R})}\\
        &\qquad\qquad \times \left\|(\varepsilon+|\nabla u_\varepsilon|^2)^\frac{m-3-|\boldsymbol{\mu}|}{2}D^{\boldsymbol{\nu}-\boldsymbol{\mu}}(\nabla u_\varepsilon)\right\|_{L^{p_2}(B_{2R})}\\
        &\qquad+\sum_{\boldsymbol{\sigma} < \boldsymbol{\nu}} \sum_{\boldsymbol{\mu} \leq \boldsymbol{\sigma}} \tbinom{\boldsymbol{\nu}}{\boldsymbol{\sigma}} \tbinom{\boldsymbol{\sigma}}{\boldsymbol{\mu}}\left\|(\varepsilon+|\nabla u_\varepsilon|^2)^\frac{|\boldsymbol{\mu}|}{2}D^{\boldsymbol{\mu}}(D^2u_\varepsilon)\right \|_{L^{r_1}(B_{2R})}\\
        &\ \qquad\qquad\times\left\|(\varepsilon+|\nabla u_\varepsilon|^2)^\frac{|\boldsymbol{\sigma}|-|\boldsymbol{\mu}|-1}{2}D^{\boldsymbol{\sigma}-\boldsymbol{\mu}}(\nabla u_\varepsilon)\right\|_{L^{r_2}(B_{2R})}\\
        &\qquad\qquad\qquad \times\left\|(\varepsilon+|\nabla u_\varepsilon|^2)^\frac{m-3-|\boldsymbol{\sigma}|}{2}D^{\boldsymbol{\nu}- \boldsymbol{\sigma}}(\nabla u_\varepsilon)\right\|_{L^{r_3}(B_{2R})}.
    \end{split}
\end{equation}
Combining the base case with the inductive step \eqref{Inductive_Step}, we obtain
\begin{equation}\label{term1_I2}
\begin{split}
\left\|(\varepsilon+|\nabla u_\varepsilon|^2)^\frac{m-p}{2}J_1\varphi\right\|_{L^q(B_{2R})} &\leq \left\|(\varepsilon+|\nabla u_\varepsilon|^2)^\frac{m-2}{2}D^{\boldsymbol{\nu}}(D^2u_\varepsilon)\varphi\right\|_{L^q(B_{2R})}+ C_1,
\end{split}
\end{equation}
where $C_1=C_1(n,m,q,R,\|f\|_{S_{loc}(\Omega)},p,\|\nabla u\|_{L^\infty_{loc}(\Omega)})$ is a positive constant.\\
We now estimate the second term on the right-hand side of \eqref{I_2_q}. By \eqref{J_2}, recalling that $|\boldsymbol{\nu}=m-2|$, we get

\begin{equation*}
\begin{split}
&(\varepsilon+|\nabla u_\varepsilon|^2)^\frac{m-p}{2}J_2\\
&\  = \sum_{0<\boldsymbol{\gamma}\leq \boldsymbol{\nu}} \tbinom{\boldsymbol{\nu}}{\boldsymbol{\gamma}}\left( (\varepsilon+|\nabla u_\varepsilon|^2)^\frac{4-p+|\boldsymbol{\gamma}|}{2}D^{\boldsymbol{\gamma}}\left((\varepsilon+|\nabla u_\varepsilon|^2)^\frac{p-4}{2}\right) \right)\\
&\qquad \times \sum_{\boldsymbol{\sigma}\leq \boldsymbol{\nu}-\boldsymbol{\gamma}} \sum_{\boldsymbol{\mu}\leq \boldsymbol{\sigma}} \mathcal{C}  \Big((\varepsilon+|\nabla u_    \varepsilon|^2)^\frac{|\boldsymbol{\mu}|}{2}D^{\boldsymbol{\mu}}(D^2u_\varepsilon)(\varepsilon+|\nabla u_\varepsilon|^2)^\frac{|\boldsymbol{\sigma}|-|\boldsymbol{\mu}|-1}{2}D^{\boldsymbol{\sigma}-\boldsymbol{\mu}}(\nabla u_\varepsilon),\\
&\qquad\qquad ,(\varepsilon+|\nabla u_\varepsilon|^2)^\frac{|\boldsymbol{\nu}|-|\boldsymbol{\gamma}|-|\boldsymbol{\sigma}|-1}{2}D^{\boldsymbol{\nu}-\boldsymbol{\gamma}- \boldsymbol{\sigma}}(\nabla u_\varepsilon)\Big).
\end{split}
\end{equation*}
Using triangular inequality, we get
\begin{equation}\label{LOT_6}
        \left\|(\varepsilon+|\nabla u_\varepsilon|^2)^\frac{m-p}{2}J_2\varphi\right\|_{L^q(B_{2R})}
        \leq R^6_{\leq \nu + 1},
\end{equation}
where,
\begin{equation*}
\begin{split}
    R^6_{\leq \nu + 1} : = &\sum_{0<\boldsymbol{\gamma}\leq \boldsymbol{\nu}} \tbinom{\boldsymbol{\nu}}{\boldsymbol{\gamma}}\Big\| (\varepsilon+|\nabla u_\varepsilon|^2)^\frac{4-p+|\boldsymbol{\gamma}|}{2}D^{\boldsymbol{\gamma}}\left((\varepsilon+|\nabla u_\varepsilon|^2)^\frac{p-4}{2}\right)\\
&\qquad \times \sum_{\boldsymbol{\sigma}\leq \boldsymbol{\nu}-\boldsymbol{\gamma}} \sum_{\boldsymbol{\mu}\leq \boldsymbol{\sigma}} \mathcal{C} \Big((\varepsilon+|\nabla u_    \varepsilon|^2)^\frac{|\boldsymbol{\mu}|}{2}D^{\boldsymbol{\mu}}(D^2u_\varepsilon)(\varepsilon+|\nabla u_\varepsilon|^2)^\frac{|\boldsymbol{\sigma}|-|\boldsymbol{\mu}|-1}{2}D^{\boldsymbol{\sigma}-\boldsymbol{\mu}}(\nabla u_\varepsilon),\\
&\qquad\qquad ,(\varepsilon+|\nabla u_\varepsilon|^2)^\frac{|\boldsymbol{\nu}|-|\boldsymbol{\gamma}|-|\boldsymbol{\sigma}|-1}{2}D^{\boldsymbol{\nu}-\boldsymbol{\gamma}- \boldsymbol{\sigma}}(\nabla u_\varepsilon)\Big)\Big\|_{L^{q}(B_{2R})}.
\end{split}
\end{equation*}
Applying H\"older inequalities with exponents $1/q_1+1/q_2=1/q$ and $1/t_1+1/t_2+1/t_3=1/q_2$, we obtain
\begin{equation}\label{stima_Lot_6}
    \begin{split}
         R^6_{\leq \nu + 1} &\leq \sum_{0<\boldsymbol{\gamma}\leq \boldsymbol{\nu}} \tbinom{\boldsymbol{\nu}}{\boldsymbol{\gamma}}\left\| (\varepsilon+|\nabla u_\varepsilon|^2)^\frac{4-p+|\boldsymbol{\gamma}|}{2}D^{\boldsymbol{\gamma}}\left((\varepsilon+|\nabla u_\varepsilon|^2)^\frac{p-4}{2}\right) \right\|_{L^{q_1}(B_{2R})}\\
&\qquad \times \sum_{\boldsymbol{\sigma}\leq \boldsymbol{\nu}-\boldsymbol{\gamma}} \sum_{\boldsymbol{\mu}\leq \boldsymbol{\sigma}} \mathcal{C}  \Big\|(\varepsilon+|\nabla u_    \varepsilon|^2)^\frac{|\boldsymbol{\mu}|}{2}D^{\boldsymbol{\mu}}(D^2u_\varepsilon)\Big\|_{L^{t_1}(B_{2R})}\\
&\qquad\qquad\times\Big\|(\varepsilon+|\nabla u_\varepsilon|^2)^\frac{|\boldsymbol{\sigma}|-|\boldsymbol{\mu}|-1}{2}D^{\boldsymbol{\sigma}-\boldsymbol{\mu}}(\nabla u_\varepsilon)\Big\|_{L^{t_2}(B_{2R})}\\
&\qquad\qquad \times\Big\|(\varepsilon+|\nabla u_\varepsilon|^2)^\frac{|\boldsymbol{\nu}|-|\boldsymbol{\gamma}|-|\boldsymbol{\sigma}|-1}{2}D^{\boldsymbol{\nu}-\boldsymbol{\gamma}- \boldsymbol{\sigma}}(\nabla u_\varepsilon)\Big\|_{L^{t_3}(B_{2R})}.
    \end{split}
\end{equation}
Using the same computation as in \eqref{stima_F} and \eqref{Faa_F}, we obtain
\begin{equation}\label{stima_F2}
\begin{split}
&\Big\| (\varepsilon+|\nabla u_\varepsilon|^2)^\frac{4-p+|\boldsymbol{\gamma}|}{2}D^{\boldsymbol{\gamma}}\left((\varepsilon+|\nabla u_\varepsilon|^2)^\frac{p-4}{2}\right) \Big\|_{L^{q_1}(B_{2R})}\\ 
&\ \leq \sum_{r=1}^{|\gamma|}c_2 \sum_{p(\boldsymbol{\gamma},r)}(\boldsymbol{\gamma}!) \prod_{j=1}^{|\boldsymbol{\gamma}|}\frac{1}{c_2(j)}C(|\boldsymbol{l}_j|)\sum_{\boldsymbol{\beta}\leq \boldsymbol{l}_j} \tbinom{\boldsymbol{l}_j}{\boldsymbol{\beta}}^{k_j}\\
&\qquad \times
\Big\|(\varepsilon+|\nabla u_\varepsilon|^2)^{\frac{|\boldsymbol{\beta}|-1}{2}}D^{\boldsymbol{\beta}}(\nabla u_\varepsilon)\Big\|^{k_j}_{L^{k_jp_j^1}(B_{2R})}\\
&\qquad \times\Big\|(\varepsilon+|\nabla u_\varepsilon|^2)^{\frac{|\boldsymbol{l}_j|-|\boldsymbol{\beta}|-1}{2}}D^{\boldsymbol{l}_j-\boldsymbol{\beta}}(\nabla u_\varepsilon)\Big\|^{k_j}_{L^{k_jp_j^2}(B_{2R})},
\end{split}
\end{equation}
where $c_2=\prod_{h=1}^r\frac{p-2-2h}{2}$.\\
Moreover, using the base step and the inductive step \eqref{Inductive_Step} of the induction, we obtain:
\begin{equation}\label{estimate_J2}
    \Big\|(\varepsilon+|\nabla u_\varepsilon|^2)^\frac{m-p}{2}J_2\varphi\Big\|_{L^q(B_{2R})} \leq C_2,
\end{equation}
where $C_2=C_2(n,m,q,R,\|f\|_{S_{loc}(\Omega)},p,\|\nabla u\|_{L^\infty_{loc}(\Omega)})$ is a positive constant independent of $\varepsilon$.
Combining \eqref{term1_I2} and \eqref{estimate_J2}, we can rewrite \eqref{I_2_q} as follows
\begin{equation}\label{STIMA_I2}
\begin{split}
    I_2 &\leq \left\|(\varepsilon+|\nabla u_\varepsilon|^2)^\frac{m-2}{2}D^{\boldsymbol{\nu}}(D^2u_\varepsilon)\varphi\right\|_{L^q(B_{2R})}\\
    &\qquad\qquad+ C(n,m,q,R,\|f\|_{S_{loc}(\Omega)},p,\|\nabla u\|_{L^\infty_{loc}(\Omega)}).
\end{split}
\end{equation}
Summing up, using \eqref{stima_N_LOT}, \eqref{m_Der_e_pEq}, \eqref{STIMA_I3}, \eqref{I_1} and \eqref{STIMA_I2}, we arrive to
\begin{equation}\label{stima_Sum}
\begin{split}
    &(1-|p-2|C(n,q))\left\|(\varepsilon+|\nabla u_\varepsilon|^2)^\frac{m-2}{2}D^{\boldsymbol{\nu}}(D^2u_\varepsilon)\varphi\right\|_{L^q(B_{2R})}\\
    &\qquad\qquad \leq \mathcal{C}(n,m,q,\|D^{\boldsymbol{\nu}}f\|_{L^q_{loc}(\Omega)},\|f\|_{S_{loc}(\Omega)},R,p,\|\nabla u\|_{L^\infty_{loc}(\Omega)}).
\end{split}
\end{equation}
For $|p-2|$ small enough and using the properties of $\varphi$, namely $\varphi\equiv1$ on $B_R$ we have:
\begin{equation*}
    \left\|(\varepsilon+|\nabla u_\varepsilon|^2)^\frac{m-2}{2}D^{\boldsymbol{\nu}}(D^2u_\varepsilon)\right\|_{L^q(B_{R})} \leq \left\|(\varepsilon+|\nabla u_\varepsilon|^2)^\frac{m-2}{2}D^{\boldsymbol{\nu}}(D^2u_\varepsilon)\varphi\right\|_{L^q(B_{2R})},
\end{equation*}
and this implies that,
$   \|(\varepsilon+|\nabla u_\varepsilon|^2)^\frac{m-2}{2}D^{\boldsymbol{\nu}}(D^2u_\varepsilon)\|_{L^q(B_{R})} \leq \mathcal{C},
$ for a a positive constant 
 $\mathcal{C}=\mathcal{C}(n,m,q,\|D^{\boldsymbol{\nu}}f\|_{L^q_{loc}(\Omega)},\|f\|_{S_{loc}(\Omega)},R,p,\|\nabla u\|_{L^\infty_{loc}(\Omega)})$ independent of $\varepsilon$.

\end{proof}
We are in position to prove Theorem \ref{teoprinc}.
\begin{proof}[Proof of Theorem \ref{teoprinc}]
Let $B_{2R}\subset\subset \Omega$ and let $u_\varepsilon$ be a solution of \eqref{eq:problregol}. Assume for the moment that $f \in C^{m-2,\beta'}_{loc}(\Omega).$
First of all, we claim that
\begin{equation}\label{CONV}
    \left\|(\varepsilon+|\nabla u_\varepsilon|^2)^\frac{m-2}{2}\nabla u_\varepsilon\right\|_{W^{m-1,q}(B_R)} \leq C,
\end{equation}
where $C>0$ independent of $\varepsilon$. To prove the claim, let $\boldsymbol{\alpha}$ be a multi-index such that $|\boldsymbol{\alpha}|\le m-1$ and exploiting the Leibniz rule, we get:
\begin{equation}\label{L_conv}
\begin{split}
    &D^{\boldsymbol{\alpha}} \left((\varepsilon+|\nabla u_\varepsilon|^2)^\frac{m-2}{2}\nabla u_\varepsilon\right)= \sum_{\boldsymbol{\gamma}\leq \boldsymbol{\alpha}} \tbinom{\boldsymbol{\alpha}}{\boldsymbol{\gamma}} D^{\boldsymbol{\gamma}}\left((\varepsilon+|\nabla u_\varepsilon|^2)^\frac{m-2}{2}\right) D^{\boldsymbol{\alpha}-\boldsymbol{\gamma}}\left(\nabla u_\varepsilon\right)\\
    &=(\varepsilon+|\nabla u_\varepsilon|^2)^\frac{m-2}{2}D^{\boldsymbol{\alpha}}(\nabla u_\varepsilon) + D^{\boldsymbol{\alpha}}\left((\varepsilon+|\nabla u_\varepsilon|^2)^\frac{m-2}{2}\right) \nabla u_\varepsilon\\
    &\quad+ \sum_{0<\boldsymbol{\gamma}< \boldsymbol{\alpha}} \tbinom{\boldsymbol{\alpha}}{\boldsymbol{\gamma}} D^{\boldsymbol{\gamma}}\left((\varepsilon+|\nabla u_\varepsilon|^2)^\frac{m-2}{2}\right) D^{\boldsymbol{\alpha}-\boldsymbol{\gamma}}\left(\nabla u_\varepsilon\right) = : K_1+K_2+K_3.
\end{split}
\end{equation}
We aim now to show that $K_1$, $K_2$ and $K_3$ are bounded in $L^q(B_R)$. Using \eqref{RESULT}, we can estimate $K_1$, that is,
\begin{equation}\label{K_1}
\big\|K_1\big\|_{L^q(B_R)} = \Big\|(\varepsilon+|\nabla u_\varepsilon|^2)^\frac{m-2}{2}D^{\boldsymbol{\alpha}}(\nabla u_\varepsilon)\Big\|_{L^q(B_R)} \leq C,
\end{equation}
where $C$ is a positive constant independent of $\varepsilon$. To estimate the remaining two terms in \eqref{L_conv}, we first apply the Faà di Bruno formula. In particular, for $K_2$ we have
\begin{equation}\label{K_2_P}
\begin{split}
    &D^{\boldsymbol{\alpha}}\left((\varepsilon+|\nabla u_\varepsilon|^2)^\frac{m-2}{2}\right)\\
    &\qquad= \sum_{r=1}^{|\boldsymbol{\alpha}|} c_1 (\varepsilon+|\nabla u_\varepsilon|^2)^{\frac{m-2}{2} -r} \sum_{s=1}^{|\boldsymbol{\alpha}|} \sum_{p_s(\boldsymbol{\alpha},{r})} \boldsymbol{\alpha}! \prod_{j=1}^s \frac{\left(D^{\boldsymbol{l}_j}(|\nabla u_\varepsilon|^2)\right)^{k_j}}{k_j!(\boldsymbol{l}_j!)^{k_j}}
\end{split}
\end{equation}
where $c_1= \prod_{h=1}^r \frac{m-2r}{2}$ and
$$p_s(\boldsymbol{\alpha},{r})= \{({k}_1,...,{k}_s;\boldsymbol{l}_1,...,\boldsymbol{l}_s):  0\prec \boldsymbol{l}_1\prec\cdots \prec\boldsymbol{l}_s;k_i>0;\sum_{i=1}^s k_i ={r}, \sum_{i=1}^{s} k_i \boldsymbol{l}_i = \boldsymbol{\alpha}\}.$$
In order to identify the leading term, we separate the cases $r=s=1$ from the remaining ones. Thus, \eqref{K_2_P} becomes
\begin{equation}\label{K_2_s}
\begin{split}
    &D^{\boldsymbol{\alpha}}\left((\varepsilon+|\nabla u_\varepsilon|^2)^\frac{m-2}{2}\right)
  = \sum_{r=1}^{|\boldsymbol{\alpha}|} c_1 (\varepsilon+|\nabla u_\varepsilon|^2)^{\frac{m-2}{2} -r} \sum_{p_1(\boldsymbol{\alpha},{r})} \boldsymbol{\alpha}! \frac{\left(D^{\boldsymbol{l}_1}(|\nabla u_\varepsilon|^2)\right)^{k_1}}{k_1!(\boldsymbol{l}_1!)^{k_1}}\\
    &\qquad\quad + \sum_{r=1}^{|\boldsymbol{\alpha}|} c_1 (\varepsilon+|\nabla u_\varepsilon|^2)^{\frac{m-2}{2} -r} \sum_{s=2}^{|\boldsymbol{\alpha}|} \sum_{p_s(\boldsymbol{\alpha},{r})} \boldsymbol{\alpha}! \prod_{j=1}^s \frac{\left(D^{\boldsymbol{l}_j}(|\nabla u_\varepsilon|^2)\right)^{k_j}}{k_j!(\boldsymbol{l}_j!)^{k_j}}\\
    &\qquad = \tfrac{m-2}{2} (\varepsilon+|\nabla u_\varepsilon|^2)^{\frac{m-4}{2}}D^{\boldsymbol{\alpha}}(|\nabla u_\varepsilon|^2) \\
    &\qquad\quad+ \sum_{r=2}^{|\boldsymbol{\alpha}|} c_1 (\varepsilon+|\nabla u_\varepsilon|^2)^{\frac{m-2}{2} -r} \sum_{p_1(\boldsymbol{\alpha},{r})} \boldsymbol{\alpha}! \frac{\left(D^{\boldsymbol{l}_1}(|\nabla u_\varepsilon|^2)\right)^{k_1}}{k_1!(\boldsymbol{l}_1!)^{k_1}}\\
    &\qquad\quad + \sum_{r=1}^{|\boldsymbol{\alpha}|} c_1 (\varepsilon+|\nabla u_\varepsilon|^2)^{\frac{m-2}{2} -r} \sum_{s=2}^{|\boldsymbol{\alpha}|} \sum_{p_s(\boldsymbol{\alpha},{r})} \boldsymbol{\alpha}! \prod_{j=1}^s \frac{\left(D^{\boldsymbol{l}_j}(|\nabla u_\varepsilon|^2)\right)^{k_j}}{k_j!(\boldsymbol{l}_j!)^{k_j}},
\end{split}
\end{equation}
where we have used 
    $p_1(\boldsymbol{\alpha},1)=(1,\boldsymbol{\alpha}).$
Moreover, using again the Leibniz rule, we have
\begin{equation}\label{Leibiniz_norma}
\begin{split}
    D^{\boldsymbol{\alpha}}(|\nabla u_\varepsilon|^2) &= \sum_{\boldsymbol{\mu}\leq \boldsymbol{\alpha}} \tbinom{\boldsymbol{\alpha}}{\boldsymbol{\mu}} \left(D^{\boldsymbol{\mu}}(\nabla u_\varepsilon),D^{\boldsymbol{\alpha}-\boldsymbol{\mu}}(\nabla u_\varepsilon)\right)\\
    &=  2\left(D^{\boldsymbol{\alpha}}(\nabla u_\varepsilon),\nabla u_\varepsilon\right)+\sum_{0<\boldsymbol{\mu}< \boldsymbol{\alpha}} \tbinom{\boldsymbol{\alpha}}{\boldsymbol{\mu}} \left(D^{\boldsymbol{\mu}}(\nabla u_\varepsilon),D^{\boldsymbol{\alpha}-\boldsymbol{\mu}}(\nabla u_\varepsilon)\right).
\end{split}    
\end{equation}
Therefore, using the previous computation, \eqref{K_2_s} can be rewritten as follows
\begin{equation}\label{K_2_sl}
\begin{split}
    &D^{\boldsymbol{\alpha}}\Big((\varepsilon+|\nabla u_\varepsilon|^2)^\frac{m-2}{2}\Big)
     = (m-2)(\varepsilon+|\nabla u_\varepsilon|^2)^{\frac{m-4}{2}}\left(D^{\boldsymbol{\alpha}}(\nabla u_\varepsilon),\nabla u_\varepsilon\right)\\
    &\qquad\quad+ \frac{m-2}{2} (\varepsilon+|\nabla u_\varepsilon|^2)^{\frac{m-4}{2}}\sum_{0<\boldsymbol{\mu}< \boldsymbol{\alpha}} \tbinom{\boldsymbol{\alpha}}{\boldsymbol{\mu}} \left(D^{\boldsymbol{\mu}}(\nabla u_\varepsilon),D^{\boldsymbol{\alpha}-\boldsymbol{\mu}}(\nabla u_\varepsilon)\right)\\
    &\qquad\quad+ \sum_{r=2}^{|\boldsymbol{\alpha}|} c_1 (\varepsilon+|\nabla u_\varepsilon|^2)^{\frac{m-2}{2} -r} \sum_{p_1(\boldsymbol{\alpha},{r})} \boldsymbol{\alpha}! \frac{\left(D^{\boldsymbol{l}_1}(|\nabla u_\varepsilon|^2)\right)^{k_1}}{k_1!(\boldsymbol{l}_1!)^{k_1}}\\
    &\qquad\quad + \sum_{r=1}^{|\boldsymbol{\alpha}|} c_1 (\varepsilon+|\nabla u_\varepsilon|^2)^{\frac{m-2}{2} -r} \sum_{s=2}^{|\boldsymbol{\alpha}|} \sum_{p_s(\boldsymbol{\alpha},{r})} \boldsymbol{\alpha}! \prod_{j=1}^s \frac{\left(D^{\boldsymbol{l}_j}(|\nabla u_\varepsilon|^2)\right)^{k_j}}{k_j!(\boldsymbol{l}_j!)^{k_j}}.
\end{split}
\end{equation}
An easy computation, see \eqref{After_c_1} and recalling  that $m=|\boldsymbol{\alpha}|+1$, shows that \eqref{K_2_sl} becomes
\begin{equation}\label{K_2_sl2}
\begin{split}
    &D^{\boldsymbol{\alpha}}\left((\varepsilon+|\nabla u_\varepsilon|^2)^\frac{m-2}{2}\right) = (m-2)(\varepsilon+|\nabla u_\varepsilon|^2)^{\frac{m-4}{2}}\left(D^{\boldsymbol{\alpha}}(\nabla u_\varepsilon),\nabla u_\varepsilon\right)\\
    &\qquad\quad+ \frac{m-2}{2} (\varepsilon+|\nabla u_\varepsilon|^2)^{-\frac{1}{2}}\sum_{0<\boldsymbol{\mu}< \boldsymbol{\alpha}} \tbinom{\boldsymbol{\alpha}}{\boldsymbol{\mu}} \left((\varepsilon+|\nabla u_\varepsilon|^2)^{\frac{|\boldsymbol{\mu}|-1}{2}}D^{\boldsymbol{\mu}}(\nabla u_\varepsilon),\right.\\
    &\qquad\qquad\left., (\varepsilon+|\nabla u_\varepsilon|^2)^{\frac{|\boldsymbol{\alpha}|-|\boldsymbol{\mu}|-1}{2}}D^{\boldsymbol{\alpha}-\boldsymbol{\mu}}(\nabla u_\varepsilon)\right)\\
    &\qquad\quad+ \sum_{r=2}^{|\boldsymbol{\alpha}|} c_1 (\varepsilon+|\nabla u_\varepsilon|^2)^{-\frac{1}{2}} \sum_{p_1(\boldsymbol{\alpha},{r})} c_2\Big(\sum_{\boldsymbol{\mu}\leq \boldsymbol{l}_1} \tbinom{\boldsymbol{l}_1}{\boldsymbol{\mu}} \Big((\varepsilon+|\nabla u_\varepsilon|^2)^{\frac{|\boldsymbol{\mu}|-1}{2}}D^{\boldsymbol{\mu}}(\nabla u_\varepsilon),\\
    &\qquad\qquad ,(\varepsilon+|\nabla u_\varepsilon|^2)^{\frac{|\boldsymbol{l}_1|-|\boldsymbol{\mu}|-1}{2}}D^{\boldsymbol{l}_1-\boldsymbol{\mu}}(\nabla u_\varepsilon)\Big)\Big)^{k_1}\\
    &\qquad\quad + \sum_{r=1}^{|\boldsymbol{\alpha}|} c_1 (\varepsilon+|\nabla u_\varepsilon|^2)^{-\frac{1}{2}} \sum_{s=2}^{|\boldsymbol{\alpha}|} \sum_{p_s(\boldsymbol{\alpha},{r})} \boldsymbol{\alpha}! \prod_{j=1}^s c_3\\
    &\qquad\qquad \times\Big(\sum_{\boldsymbol{\mu}\leq \boldsymbol{l}_j} \tbinom{\boldsymbol{l}_j}{\boldsymbol{\mu}} \Big((\varepsilon+|\nabla u_\varepsilon|^2)^{\frac{|\boldsymbol{\mu}|-1}{2}}D^{\boldsymbol{\mu}}(\nabla u_\varepsilon) ,(\varepsilon+|\nabla u_\varepsilon|^2)^{\frac{|\boldsymbol{l}_j|-|\boldsymbol{\mu}|-1}{2}}D^{\boldsymbol{l}_j-\boldsymbol{\mu}}(\nabla u_\varepsilon)\Big)\Big)^{k_j},
\end{split}
\end{equation}
where $c_2=\frac{\boldsymbol{\alpha}!}{k_1!(\boldsymbol{l}_1!)^{k_1}}$ and $c_3= \frac{1}{k_j!(\boldsymbol{l}_j!)^{k_j}}$.
 Thus, using \eqref{K_2_sl2}, we can estimate $K_2$ as follows:
\begin{equation}\label{1E_K2}
    \begin{split}
    &|K_2|\leq (m-2)(\varepsilon+|\nabla u_\varepsilon|^2)^{\frac{m-2}{2}}|D^{\boldsymbol{\alpha}}(\nabla u_\varepsilon)|\\
    &\qquad+ \frac{m-2}{2} \sum_{0<\boldsymbol{\mu}< \boldsymbol{\alpha}} \tbinom{\boldsymbol{\alpha}}{\boldsymbol{\mu}} \left|(\varepsilon+|\nabla u_\varepsilon|^2)^{\frac{|\boldsymbol{\mu}|-1}{2}}D^{\boldsymbol{\mu}}(\nabla u_\varepsilon)\right|\\
    &\qquad\qquad \times\left|(\varepsilon+|\nabla u_\varepsilon|^2)^{\frac{|\boldsymbol{\alpha}|-|\boldsymbol{\mu}|-1}{2}}D^{\boldsymbol{\alpha}-\boldsymbol{\mu}}(\nabla u_\varepsilon)\right|\\
    &\qquad+ \sum_{r=2}^{|\boldsymbol{\alpha}|} c_1 \sum_{p_1(\boldsymbol{\alpha},{r})} c_2c(|\boldsymbol{l}_1|)
    \sum_{\boldsymbol{\mu}\leq \boldsymbol{l}_1} \tbinom{\boldsymbol{l}_1}{\boldsymbol{\mu}} \Big|(\varepsilon+|\nabla u_\varepsilon|^2)^{\frac{|\boldsymbol{\mu}|-1}{2}}D^{\boldsymbol{\mu}}(\nabla u_\varepsilon)\Big|^{k_1}\\
    &\qquad\qquad \times\Big|(\varepsilon+|\nabla u_\varepsilon|^2)^{\frac{|\boldsymbol{l}_1|-|\boldsymbol{\mu}|-1}{2}}D^{\boldsymbol{l}_1-\boldsymbol{\mu}}(\nabla u_\varepsilon)\Big|
    ^{k_1}\\
    &\qquad + \sum_{r=1}^{|\boldsymbol{\alpha}|} c_1 \sum_{s=2}^{|\boldsymbol{\alpha}|} \sum_{p_s(\boldsymbol{\alpha},{r})} \boldsymbol{\alpha}! \prod_{j=1}^s c_3c(|\boldsymbol{l}_j|)\\
    &\qquad\qquad \times \sum_{\boldsymbol{\mu}\leq \boldsymbol{l}_j} \tbinom{\boldsymbol{l}_j}{\boldsymbol{\mu}} \Big|(\varepsilon+|\nabla u_\varepsilon|^2)^{\frac{|\boldsymbol{\mu}|-1}{2}}D^{\boldsymbol{\mu}}(\nabla u_\varepsilon)\Big|^{k_j}\Big|(\varepsilon+|\nabla u_\varepsilon|^2)^{\frac{|\boldsymbol{l}_j|-|\boldsymbol{\mu}|-1}{2}}D^{\boldsymbol{l}_j-\boldsymbol{\mu}}(\nabla u_\varepsilon)\Big|^{k_j},
\end{split}
\end{equation}
where $c(|\boldsymbol{l}_1|)= \max\{1,|\boldsymbol{l}_1|^{k_1-1}\}$ and $c(|\boldsymbol{l}_j|)= \max\{1,|\boldsymbol{l}_j|^{k_j-1}\}$.\\
Using iterated H\"older inequalities and the previous Lemma, see \eqref{RESULT}, we obtain
\begin{equation}\label{K2_C}
    \|K_2\|_{L^q(B_R)} \leq C,
\end{equation}
where $C>0$ independent of $\varepsilon$.
In an analogous way, we can estimate the $L^q$-norm of $K_3$. 
Summing up, by \eqref{K_1} and \eqref{K2_C} we deduce that
\begin{equation}\label{Final_conv}
    \left\|D^{\boldsymbol{\alpha}} \left((\varepsilon+|\nabla u_\varepsilon|^2)^\frac{m-2}{2}\nabla u_\varepsilon\right) \right\|_{L^q(B_R)} \leq C,
\end{equation}
where $C=C(n,m,q,\|D^{\boldsymbol{\nu}}f\|_{L^q_{loc}(\Omega)},\|f\|_{S_{loc}(\Omega)},R,p,\|\nabla u\|_{L^\infty_{loc}(\Omega)})$ is a positive constant independent of $\varepsilon$. Therefore, by \eqref{CONV}, it follows that  there exists $w \in W^{m-1,q}(B_R)$ such that:
\begin{equation}\label{conv_deb}
    (\varepsilon+|\nabla u_\varepsilon|^2)^\frac{m-2}{2}\nabla u_\varepsilon \rightharpoonup w \quad \text{in } \ W^{m-1,q}(B_R).
\end{equation}
Moreover, the Rellich–Kondrachov Theorem implies
\begin{equation}\label{forte_m}
    (\varepsilon+|\nabla u_\varepsilon|^2)^\frac{m-2}{2}\nabla u_\varepsilon \rightarrow w \quad \text{in } \ W^{m-2,q}(B_R).
\end{equation}
In addition, we know that $\nabla u_\varepsilon \rightarrow \nabla u$ uniformly in $B_R$ (see \cite{Ant1,Anto2,3,DB,L2}), thus we can conclude:
\begin{equation}\label{before_f}
    w\equiv |\nabla u|^{m-2} \nabla u \in W^{m-1,q}(B_R).
\end{equation}
In the final step, we remove assumption $f\in C^{m-2,\beta'}_{loc}$.
By standard density argument one can infer that there exists a sequence $\{f_{l}\} \subset C^\infty(\Omega)$ such that
\begin{equation}\label{nuovo_conv}
    f_l \rightarrow f \ \ \text { in }\ W^{m-2,q}_{loc}(\Omega) \cap W^{m-3,s}_{loc}(\Omega), \ s \geq 1.
\end{equation}
We proceed by considering a sequence $\{u_l\}$ of weak solutions to the following problem
\begin{equation} \label{problu_L}
		\begin{cases}
			-\operatorname{div}\left( |\nabla u_{l}|^{p-2} \, \nabla u_{l} \right) = f_l & \text{in } B_{2R} \\
			u_{l} = u & \text{on } \partial B_{2R}.
		\end{cases}
\end{equation}
By \eqref{before_f}, we deduce that
\begin{equation}\label{ineq_u_l}
    \left\||\nabla u_{l}|^{m-2}\nabla u_l\right\|_{W^{m-1,q}(B_R)} \leq C(n,m,q,R,p,\|\nabla u_l\|_{L^\infty_{loc}(\Omega)},\|f_l\|_{W^{m-2,q}_{loc}(\Omega)},\|f_l\|_{S_{loc}(\Omega)}).
\end{equation}
In addition, we recall that (see \cite[Theorem 1.7]{L2})
\begin{equation}\label{G_L}
    \|\nabla u_l\|_{L^\infty_{loc}(\Omega)} \leq C(p,n,\|f_l\|_{L^s_{loc}(\Omega)}),
\end{equation}
with $s>n$.\\
Observe that, by Lemma \ref{approssimazione}, \eqref{nuovo_conv} and \eqref{G_L}, inequality \eqref{ineq_u_l} would imply the existence of $\bar w \in W^{m-1,q}(B_R)$ such that
\begin{equation}\label{conv:W:m-1}
    |\nabla u_l|^{m-2} \nabla u_l \rightharpoonup \bar w \ \ \text{ in } W^{m-1,q}(B_R).
\end{equation}
Furthermore, since the sequence $\{u_l\}$ satisfies
\begin{equation*}
    \nabla u_l \rightarrow \nabla u \ \ \text{uniformly in } K\subset \subset B_{R},
\end{equation*}
we conclude that
$    \bar w \equiv |\nabla u|^{m-2} \nabla u \in W^{m-1,q}(B_R)$.\\
It remains to prove that
    $|\nabla u|^{m-2}D^2 u \in W^{m-2,q}_{loc}(\Omega).$
Assume $f\in C^{m-2,\beta'}_{loc}(\Omega)$. Arguing as done  in the proof of \eqref{CONV}, we obtain
\begin{equation}\label{CONV2}
    \left\|(\varepsilon+|\nabla u_\varepsilon|^2)^\frac{m-2}{2}D^2 u_\varepsilon\right\|_{W^{m-2,q}(B_R)} \leq C,
\end{equation}
where $C>0$ independent of $\varepsilon$. Therefore, it follows that  there exists $\tilde w \in W^{m-2,q}(B_R)$ such that:
\begin{equation*}
    (\varepsilon+|\nabla u_\varepsilon|^2)^\frac{m-2}{2}D^2 u_\varepsilon \rightharpoonup \tilde w \quad \text{in } \ W^{m-2,q}(B_R).
\end{equation*}
Moreover, the Rellich–Kondrachov Theorem implies
\begin{equation*}
    (\varepsilon+|\nabla u_\varepsilon|^2)^\frac{m-2}{2}D^2 u_\varepsilon \rightarrow \tilde w \quad \text{in } \ W^{m-3,q}(B_R).
\end{equation*}
In addition, by the uniform convergence of the gradients and by Remark \ref{regolarizziamo}, we have
\begin{equation*}
    (\varepsilon+|\nabla u_\varepsilon|^2)^\frac{m-2}{2}D^2 u_\varepsilon \rightharpoonup |\nabla u|^{m-2}D^2 u \quad \text{in } \ L^q(B_R).
\end{equation*}
This  implies that
$\tilde w \equiv |\nabla u|^{m-2}D^2 u \in W^{m-2,q}(B_R).$ The assumption $f\in C^{m-2,\beta '}_{loc}$ can be removed as before. The fact that $
|\nabla u|^{m-2}\nabla u \in C^{m-2,\gamma}_{\mathrm{loc}}(\Omega)$ and 
$|\nabla u|^{m-2} D^2 u \in C^{m-3,\gamma}_{\mathrm{loc}}(\Omega)$
follows by classical Morrey's embedding theorems. Indeed, if we choose $q>n$, \cite[Corollary 9.13]{Brezis} yields $|\nabla u|^{m-2}\nabla u\in C^{m-2,\gamma}_{\mathrm{loc}}(\Omega)$, with $\gamma=1-n/q,$ therefore it is enough to choose $q$ large enough. The same argument has to be applied for $|\nabla u|^{m-2} D^2 u$.
\end{proof}

\section{\texorpdfstring{$L^{\infty}$-type estimates}{L-infinity type estimates}}\label{sec:Linf}
\noindent This section is entirely devoted to the proof of Theorem \ref{INFINITO}.\\
For any $m\in \N$, we consider the $m$-th linearization of the weak formulation of the regularized problem \eqref{eq:problregol}.
First, we state the weak formulation of the problem \eqref{eq:problregol}, that is, \begin{equation}\label{equazione debole3}
\int_{B_{2R}} (\varepsilon + |\nabla u_\varepsilon|^2)^{\frac{p-2}{2}} (\nabla u_\varepsilon, \nabla \zeta) \,dx
= \int_{B_{2R}} f  \zeta  \,dx
\quad \forall \zeta \in C_c^{\infty}(B_{2R}),
\end{equation}
where $B_{2R}:=B_{2R}(x_0)$.
 We consider the following function $\zeta:=D^{\boldsymbol{\nu}}\varphi,$ with $\varphi\in C_c^{\infty}(B_{2R})$ and $|\boldsymbol{\nu}|=m$, and we test the previous equation \eqref{equazione debole3} with $\zeta$, obtaining
\begin{equation*}\label{m_linearizzato1}
    \int_{B_{2R}} D^{\boldsymbol{\nu}}\left(((\varepsilon + |\nabla u_\varepsilon|^2)^{\frac{p-2}{2}} \nabla u_\varepsilon\right), \nabla \varphi) \,dx
= \int_{B_{2R}} D^{\boldsymbol{\nu}} (f) \varphi  \,dx, \quad \forall \varphi \in C_c^{\infty}(B_{2R}).
\end{equation*}
Employing Leibniz’s rule for derivatives, we get 
\begin{equation}\label{eq:100}
    \begin{split}
        \sum_{\boldsymbol{\gamma}\leq \boldsymbol{\nu}} \int_{B_{2R}} \tbinom{\boldsymbol{\nu}}{\boldsymbol{\gamma}} D^{\boldsymbol{\gamma}}\left(((\varepsilon + |\nabla u_\varepsilon|^2)^{\frac{p-2}{2}}\right) (D^{\boldsymbol{\nu}-\boldsymbol{\gamma}}(\nabla u_\varepsilon), \nabla \varphi) \,dx
= \int_{B_{2R}} D^{\boldsymbol{\nu}} (f) \varphi  \,dx.
    \end{split}
\end{equation}
For a fixed multi-index $\boldsymbol{\gamma}$, we apply Faa di Bruno's formula, see \eqref{FFA_s}, to the term 
$\\D^{\boldsymbol{\gamma}}\Bigl((\varepsilon + |\nabla u_\varepsilon|^2)^{\frac{p-2}{2}}\Bigr).$ 
More precisely, we define the functions 
\begin{equation}\label{definizione_di_h}
    h(t) := (\varepsilon + t)^{\frac{p-2}{2}} \quad \text{and} \quad v(x) := |\nabla u_\varepsilon(x)|^2,
\end{equation}
and then apply the formula to $h(v(x))$, yielding 
\begin{equation}\label{eq:101}
\begin{split}
D^{\boldsymbol{\gamma}}\Bigl((\varepsilon + |\nabla u_\varepsilon|^2)^{\frac{p-2}{2}}\Bigr) = \sum_{r=1}^ {|\boldsymbol{\gamma}|} D^{{r}} h \sum_{s=1}^{|\boldsymbol{\gamma}|} \sum_{p_s(\boldsymbol{\gamma},r)}(\boldsymbol{\gamma}!) \prod_{j=1}^{s} \frac{\left(D^{\boldsymbol{l}_j}(|\nabla u_\varepsilon|^2)\right)^{{k}_j}}{k_j!(\boldsymbol{l}_j!)^{k_j}},
    \end{split}
\end{equation}
where \begin{equation}\label{set}
\begin{split}
    p_s(\boldsymbol{\gamma},{r})= \Big\{({k}_1,...,{k}_s;\boldsymbol{l}_1,...,\boldsymbol{l}_s):  0\prec \boldsymbol{l}_1\prec\cdots \prec\boldsymbol{l}_s;k_i>0;\sum_{i=1}^s k_i ={r}, \sum_{i=1}^{s} k_i \boldsymbol{l}_i = \boldsymbol{\gamma}\Big\},
\end{split}
\end{equation}
where each $\boldsymbol{l}_i$ is an $n$-dimensional multi-index and each $k_i$ is a positive scalar, for any $i = 1, \ldots, s$. We recall that the notation $D^{{r}} h$ denotes the ${r}$-th order derivative of the function $h(t)$ with respect to $t$, evaluated at $|\nabla u_\varepsilon|^2$.
By \eqref{eq:100} and \eqref{eq:101}, we obtain 
\begin{equation}\label{eq:mlinearizzato}
    \begin{split}
        &\sum_{\boldsymbol{\gamma}\leq \boldsymbol{\nu}} \int_{B_{2R}} \tbinom{\boldsymbol{\nu}}{\boldsymbol{\gamma}}  \sum_{r=1}^ {|\boldsymbol{\gamma}|} D^{r} h \sum_{s=1}^{|\boldsymbol{\gamma}|} \sum_{p_s(\boldsymbol{\gamma},r)}(\boldsymbol{\gamma}!) \prod_{j=1}^{s} \frac{\left(D^{\boldsymbol{l}_j}(|\nabla u_\varepsilon|^2)\right)^{{k}_j}}{k_j!(\boldsymbol{l_j}!)^{k_j}} (D^{\boldsymbol{\nu}-\boldsymbol{\gamma}}(\nabla u_\varepsilon), \nabla \varphi) \,dx
\\&\qquad= \int_{B_{2R}} D^{\boldsymbol{\nu}} (f) \varphi  \,dx.
    \end{split}
\end{equation}
Let $\psi$ be a nonnegative compactly supported smooth function in $B_{2R}$. For fixed numbers $k\in \R$ and $q\ge 1$, by density, we consider the following test function 
\begin{equation}\label{test1}
    \varphi (x):= (\varepsilon+|\nabla u_\varepsilon|^2)^{\frac{2(m-2+k)q+2-p}{2}}|D^{\boldsymbol\nu}u_{\varepsilon}|^{2q-2}D^{\boldsymbol\nu}u_{\varepsilon}\psi^2\in C_{c}^{1}(B_{2R}). 
\end{equation}
So, we get 
\begin{equation}\label{gradiente_test}
\begin{split}
 \nabla \varphi&= (2q-1)(\varepsilon+|\nabla u_\varepsilon|^2)^{\frac{2(m-2+k)q+2-p}{2}}|D^{\boldsymbol\nu}u_{\varepsilon}|^{2q-2}D^{\boldsymbol\nu}(\nabla u_{\varepsilon})\psi^2 
 \\ &\quad+(2(m-2+k)q+2-p)(\varepsilon+|\nabla u_\varepsilon|^2)^{\frac{2(m-2+k)q-p}{2}}|D^{\boldsymbol\nu}u_{\varepsilon}|^{2q-2}D^{\boldsymbol\nu}u_{\varepsilon}\psi^2 D^2u_\varepsilon\nabla u_\varepsilon 
\\&\quad+2(\varepsilon+|\nabla u_\varepsilon|^2)^{\frac{2(m-2+k)q+2-p}{2}}|D^{\boldsymbol\nu}u_{\varepsilon}|^{2q-2}D^{\boldsymbol\nu}u_{\varepsilon}\psi \nabla \psi:=J_1+J_2+J_3.
\end{split}
\end{equation}
%
%
%

\begin{rem}
    Note that when $\boldsymbol{\gamma}=\boldsymbol{0}$, in the sum of \eqref{eq:mlinearizzato} we obtain a summand containing a derivative of order $m+1$. Now, suppose $\boldsymbol{\gamma}\neq\boldsymbol{0}$, and write $\bigcup_{\boldsymbol{\gamma},s,r} p_s(\boldsymbol{\gamma},r)=p_1(\boldsymbol{\nu},1)\cup p_1(\boldsymbol{\nu},1)^c.$ We claim that for indexes in $p_1(\boldsymbol{\nu},1)^c$, the derivatives appearing in the related summands in \eqref{eq:mlinearizzato} are of order at most $m$, when $\nabla \varphi \rightsquigarrow J_2+J_3$. Indeed indexes belonging to $p_1(\boldsymbol{\nu},1)^c$ verify $\boldsymbol{\gamma}\neq \boldsymbol{\nu}$ or $s\neq 1$ or $r\neq 1$. Suppose, for instance, that $\boldsymbol{\gamma}\neq \boldsymbol{\nu}.$ Then, since $\boldsymbol{\gamma} \leq \boldsymbol{\nu}$, we have $|\boldsymbol{\gamma}|\leq |\boldsymbol{\nu}|-1$ which, in turn, implies that $|\boldsymbol{l_j}|\leq |\boldsymbol{\nu}|-1=m-1$, for any $j$. For the other cases one has to reason in a similar way.
\end{rem}
Owing to the previous remark, in the following lemma we provide an upper bound for the terms appearing in \eqref{eq:mlinearizzato} that involve derivatives of order at most $m$.
\begin{lem}\label{pantusa1}
 Let $\Omega$ be a domain in $\R^n$, $n\ge 2$, and $B_{2R}\subset\subset \Omega$. Fix $\zeta>2$ and let $\hat s:=1/k$, for $0<k<1$. Let $q\geq \hat s$. Fix $|\boldsymbol{\nu}|=m$. Let $u_\varepsilon$ be a solution  of \eqref{eq:problregol}, and consider $J_2$ and $J_3$ defined in \eqref{gradiente_test}, see also \eqref{test1}. There exists  $\mathfrak{C}:=\mathfrak{C}(k,n,m,\zeta)>0$  such that if $|p-2|<\mathfrak{C}$ then the following holds true. 
    \begin{equation}\label{pantusa}
        \begin{split}
    &\int_{B_{2R}}\Big(\sum_{\boldsymbol{0}<\boldsymbol{\gamma}\leq \boldsymbol{\nu}}  \tbinom{\boldsymbol{\nu}}{\boldsymbol{\gamma}}  \sum_{r=1}^ {|\boldsymbol{\gamma}|} D^{r} h \sum_{s=1}^{|\boldsymbol{\gamma}|} \sum_{p_1(\boldsymbol{\nu},1)^c}(\boldsymbol{\gamma}!) \prod_{j=1}^{s} \frac{\left(D^{\boldsymbol{l}_j}(|\nabla u_\varepsilon|^2)\right)^{{k}_j}}{k_j!(\boldsymbol{l_j}!)^{k_j}} D^{\boldsymbol{\nu}-\boldsymbol{\gamma}}(\nabla u_\varepsilon),J_2+J_3\Big)
    \\&\qquad+ \Big(D^{1} h \sum_{\boldsymbol{0}\neq \boldsymbol{\mu}< \boldsymbol{\nu}}  \tbinom{\boldsymbol{\nu}}{\boldsymbol{\mu}}\,  (D^{\boldsymbol{\nu}-\boldsymbol{\mu}}(\nabla u_\varepsilon),D^{\boldsymbol{\mu}}(\nabla u_\varepsilon)) \nabla u_\varepsilon, J_2+J_3\Big)\,dx
    \\&\quad \leq q\mathcal{C}\Big(\int_{B_{2R}}(\varepsilon+|\nabla u_\varepsilon|^2)^\frac{\zeta(m-2+k)(q-\hat s)}{2} |D^{\boldsymbol\nu}u_{\varepsilon}|^{\zeta(q-\hat s)}(\psi^\zeta + |\nabla \psi|^\zeta)\,dx\Big)^{\frac 2\zeta},
        \end{split}
    \end{equation}
    where $\mathcal{C}$ depending on $m,k,n,\zeta,l,R,p,\| \nabla u\|_{L^{\infty}(B_{2R})}$ and $\|f\|_{W^{m-2,l}( B_{2R})}$.
\end{lem}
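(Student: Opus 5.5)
The plan is to estimate every summand on the left-hand side of \eqref{pantusa} pointwise by a product of the form
\[
\mathcal{F}\cdot(\varepsilon+|\nabla u_\varepsilon|^2)^{\frac{(m-2+k)(2q-1)}{2}}|D^{\boldsymbol\nu}u_\varepsilon|^{2q-1}(\psi^2+\psi|\nabla\psi|),
\]
where $\mathcal{F}$ is a finite product of weighted derivatives of $u_\varepsilon$ of order at most $m$. These weighted derivatives are exactly the ones that Lemma \ref{sisso} bounds uniformly in $\varepsilon$. Then I will integrate and use H\"older's inequality with three exponents.

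\textbf{Step 1: weight bookkeeping.} Inside each summand I expand $D^rh$ as $c(p,r)(\varepsilon+|\nabla u_\varepsilon|^2)^{\frac{p-2}{2}-r}$. I expand $D^{\boldsymbol l_j}(|\nabla u_\varepsilon|^2)$ by the Leibniz rule, as in \eqref{L_M}. I then redistribute the powers of $(\varepsilon+|\nabla u_\varepsilon|^2)$ exactly as in \eqref{After_c_1}--\eqref{stima_F}, using $\sum k_j\boldsymbol l_j=\boldsymbol\gamma$ and $\sum k_j=r$. In this way every derivative $D^{\boldsymbol\beta}(\nabla u_\varepsilon)$ of total order $|\boldsymbol\beta|+1$ carries the weight $(\varepsilon+|\nabla u_\varepsilon|^2)^{\frac{|\boldsymbol\beta|-1}{2}}$, which is the weight appearing in \eqref{RESULT}.

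Counting homogeneities, the factor $(\varepsilon+|\nabla u_\varepsilon|^2)^{\frac{p-2}{2}}$ from $h$ cancels against the $\frac{2-p}{2}$ built into the test function \eqref{test1}. For $J_2$, the extra factor $D^2u_\varepsilon\nabla u_\varepsilon(\varepsilon+|\nabla u_\varepsilon|^2)^{-1}$ has bounded weight $|\nabla u_\varepsilon|(\varepsilon+|\nabla u_\varepsilon|^2)^{-1/2}\le 1$ times $|D^2u_\varepsilon|$. After this bookkeeping the remaining power is $(\varepsilon+|\nabla u_\varepsilon|^2)^{\frac{(m-2+k)(2q-1)}{2}+\frac{k}{2}}$ or larger, and the excess is bounded by $\|\nabla u_\varepsilon\|_\infty$, which is uniform by \cite{L2}. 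The factors $2(m-2+k)q+2-p$ and $\binom{\boldsymbol\nu}{\boldsymbol\gamma}$ contribute at most $Cq$. This gives the factor $q$ in \eqref{pantusa}, since $J_3$ has no $q$ and $J_2$ one $q$.

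\textbf{Step 2: H\"older.} I write $|D^{\boldsymbol\nu}u_\varepsilon|^{2q-1}=|D^{\boldsymbol\nu}u_\varepsilon|^{2(q-\hat s)}\,|D^{\boldsymbol\nu}u_\varepsilon|^{2\hat s-1}$, and split the weight accordingly. Then I apply H\"older with exponents $\zeta/2$ on the main factor
\[
\bigl[(\varepsilon+|\nabla u_\varepsilon|^2)^{\frac{(m-2+k)(q-\hat s)}{2}}|D^{\boldsymbol\nu}u_\varepsilon|^{q-\hat s}\bigr]^2(\psi^2+|\nabla\psi|^2),
\]
and $(1-2/\zeta)^{-1}$ on the rest. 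The rest consists of $\mathcal F$ times
\[
(\varepsilon+|\nabla u_\varepsilon|^2)^{\frac{(m-2+k)(2\hat s-1)+k}{2}}|D^{\boldsymbol\nu}u_\varepsilon|^{2\hat s-1}.
\]
Since $|\boldsymbol\nu|=m$, the factor $D^{\boldsymbol\nu}u_\varepsilon$ is a derivative of order $m$. Its natural weight is $(m-2)/2$ (Lemma \ref{sisso} with $m$ replaced by $m$, i.e. $|\boldsymbol\nu|-2$ derivatives of $D^2u$). The weight available is at least $(m-2)(2\hat s-1)/2$, so it suffices, and it is independent of $q$ because $\hat s=1/k$ is fixed. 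The remaining factors in $\mathcal F$ have order at most $m$, with order-$m$ factors possible only through $J_2$-type pairings. The product is therefore a finite product of quantities bounded in $L^{t}_{loc}$ for every $t$, by Lemma \ref{sisso} and Theorem \ref{teoprinc}. I will use the Sobolev embedding $W^{m,l}\subset W^{m-2,t}$ for large $t$, guaranteed by $l>n/2$, to place $f$ in $S_{loc}$ with the required $L^q$-norms. Iterated H\"older then bounds the rest by a constant $\mathcal{C}$, provided $|p-2|<\mathfrak C(k,n,m,\zeta)$ is small enough for Lemma \ref{sisso} to apply at the finitely many exponents $t=t(\zeta,m,k)$ needed. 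Since $q$ does not enter these exponents, $\mathfrak C$ is independent of $q$.

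\textbf{Main obstacle.} The delicate point is Step 1 for the terms where the top-order factor $D^{\boldsymbol\nu}(\nabla u_\varepsilon)$ or $D^{\boldsymbol\nu-\boldsymbol\mu}(\nabla u_\varepsilon)$ has order $m$ or $m+1$. I must check, using the Remark preceding the Lemma, that on $p_1(\boldsymbol\nu,1)^c$ and in the second sum over $\boldsymbol 0\ne\boldsymbol\mu<\boldsymbol\nu$ no derivative of order $m+1$ survives once paired with $J_2+J_3$. I must also check that the negative powers of $(\varepsilon+|\nabla u_\varepsilon|^2)$ produced by $D^rh$ are fully compensated. If a weight deficit of $1/2$ appears, I would absorb it using the extra $k/2$ weight reserved via $\hat s=1/k$, since $k\hat s=1$ produces exactly one extra unit of weight.
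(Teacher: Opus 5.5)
Your argument is essentially the paper's. You expand $D^rh$ and $D^{\boldsymbol l_j}(|\nabla u_\varepsilon|^2)$, give each derivative of order $j$ the weight $(\varepsilon+|\nabla u_\varepsilon|^2)^{(j-2)/2}$ controlled by Lemma \ref{sisso}, and split off the factor with exponent $q-\hat s$. Then you apply H\"older with $(\zeta/2,\zeta/(\zeta-2))$ and close with iterated H\"older. The one difference is that the paper first uses Young's inequality pointwise, replacing $|D^{\boldsymbol\nu}u_\varepsilon|^{2q-1}|D^2u_\varepsilon|$ by terms with $|D^{\boldsymbol\nu}u_\varepsilon|^{2q-2}$ and $|D^{\boldsymbol\nu}u_\varepsilon|^{2q}|D^2u_\varepsilon|^2$, and then splits $2q-2=(2q-2\hat s)+(2\hat s-2)$. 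Splitting $2q-1=2(q-\hat s)+(2\hat s-1)$ directly, as you do, works equally well.

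Your exponent count in Step 1 is wrong, and the slack you claim is not there, although the proof still closes.

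\textbf{$J_3$ terms.} $D^rh$ and the test function supply $(m-2+k)q-r$. The natural weights of $\mathcal F$ total
\[
\frac{|\boldsymbol\gamma|}{2}-r+\frac{m-|\boldsymbol\gamma|-1}{2}=\frac{m-1}{2}-r .
\]
So the leftover is $(m-2+k)q-\frac{m-1}{2}$, not $(m-2+k)q-\frac{m-2}{2}$.

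\textbf{$J_2$ terms.} After $|\nabla u_\varepsilon|\le(\varepsilon+|\nabla u_\varepsilon|^2)^{1/2}$, a factor $(\varepsilon+|\nabla u_\varepsilon|^2)^{-1/2}$ remains, which you dropped. The leftover is therefore $(m-2+k)q-\frac m2$. The second line of \eqref{pantusa} gives the same two numbers.

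\textbf{What is required.} Since $(m-2+k)\hat s=(m-2)\hat s+1$, the main factor plus the natural weight of $|D^{\boldsymbol\nu}u_\varepsilon|^{2\hat s-1}$ require
\[
(m-2+k)(q-\hat s)+\frac{(m-2)(2\hat s-1)}{2}=(m-2+k)q-\frac m2 .
\]
Hence the $J_2$ terms balance exactly, with zero surplus. The $J_3$ terms have a surplus of only $\frac12$, which is bounded via $\|\nabla u_\varepsilon\|_\infty$. There is no spare unit of weight to absorb deficits. The identity $k\hat s=1$ is used up entirely, exactly as in the paper's identity $-(m-2)(\hat s-1)-m+1=-(m-2+k)\hat s$.
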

\begin{proof}
We denote by $R_1(J_2)$, $R_1(J_3)$, $R_2(J_2)$ and $R_2(J_3)$ the terms to be estimated, where $R_1$ are the integrals in the first line of the l.h.s of the thesis and $R_2$ the ones in the second.
We start by estimating $R_1(J_2)$. Recalling that 
\begin{equation}\label{derivate di h}
    D^{r}h \le C(p)(\varepsilon+|\nabla u_\varepsilon|^2)^{\frac{p-2(r+1)}{2}},
\end{equation}
where $C(p)$ is a positive constant, we have
\begin{equation*}
    \begin{split}
R_1(J_2)&:=\int_{B_{2R}}\sum_{\boldsymbol{0}<\boldsymbol{\gamma}\leq \boldsymbol{\nu}}  \tbinom{\boldsymbol{\nu}}{\boldsymbol{\gamma}}  \sum_{r=1}^ {|\boldsymbol{\gamma}|} D^{r} h \sum_{s=1}^{|\boldsymbol{\gamma}|} \sum_{p_1(\boldsymbol{\nu},1)^c}(\boldsymbol{\gamma}!) \prod_{j=1}^{s} \frac{\left(D^{\boldsymbol{l}_j}(|\nabla u_\varepsilon|^2)\right)^{{k}_j}}{k_j!(\boldsymbol{l_j}!)^{k_j}} (D^{\boldsymbol{\nu}-\boldsymbol{\gamma}}(\nabla u_\varepsilon), J_2)\,dx
        \\& \ \leq qC(|\boldsymbol{\nu}|,p,k)\int_{B_{2R}}\sum_{\boldsymbol{0}<\boldsymbol{\gamma}\leq \boldsymbol{\nu}} \sum_{r,s=1}^ {|\boldsymbol{\gamma}|} \sum_{p_1(\boldsymbol{\nu},1)^c}(\varepsilon + |\nabla u_\varepsilon|^2)^{\frac{2(m-2+k)q-1}{2}-r} \\
        &\qquad\quad\times\prod_{j=1}^{s} \left(D^{\boldsymbol{l}_j}(|\nabla u_\varepsilon|^2)\right)^{{k}_j} |D^{\boldsymbol\nu}u_{\varepsilon}|^{2q-1}|D^{\boldsymbol{\nu}-\boldsymbol{\gamma}}(\nabla u_\varepsilon)| |D^2u_\varepsilon|\psi^2\,dx.
    \end{split}
\end{equation*}
Using standard Young inequality we deduce

\begin{equation*}
    \begin{split}
        R_1(J_2)&\lesssim R_{1,1}(J_2)+R_{1,2}(J_2)\\&:=q\int_{B_{2R}} \sum_{\boldsymbol{0}< \boldsymbol{\gamma}\leq \boldsymbol{\nu}}\sum_{r,s=1}^ {|\boldsymbol{\gamma}|}  \sum_{p_1(\boldsymbol{\nu},1)^c} (\varepsilon+|\nabla u_\varepsilon|^2)^{(m-2+k)q-2r}\prod_{j=1}^{s} {\left(D^{\boldsymbol{l}_j}(|\nabla u_\varepsilon|^2)\right)^{2{k}_j}}\\
        &\qquad \qquad \times|D^{\boldsymbol{\nu}-\boldsymbol{\gamma}}(\nabla u_\varepsilon)|^2  |D^{\boldsymbol\nu}u_{\varepsilon}|^{2q-2}\psi^2\,dx\\
        &\qquad +q\int_{B_{2R}}(\varepsilon+|\nabla u_\varepsilon|^2)^{(m-2+k)q-1}|D^{\boldsymbol\nu}u_{\varepsilon}|^{2q}|D^2u_\varepsilon|^2\psi^2\,dx.
    \end{split}
\end{equation*}
 In the integrals above, it appears either a factor $|D^{\boldsymbol{\nu}}u_\varepsilon|^{2q}$ or $|D^{\boldsymbol{\nu}}u_\varepsilon|^{2q-2}$, in the following we shall make them explicit.   
 For reasons that will become clear later, we shall analyze separately the terms containing $D^{\boldsymbol{\nu}}u_\varepsilon$. With this aim, we note that the set $p_1(\boldsymbol{\nu},1)^c= p_{\boldsymbol{\nu}-1} \cup p_{\boldsymbol{\nu}-1}^c$, where $p_{\boldsymbol{\nu}-1}$ denotes all partitions in which there exists an index $j\in\{1,...,s\}$ such that $|\boldsymbol{l}_j|=|\boldsymbol{\nu}|-1$. Obviously, this happens only if $\boldsymbol{\nu}-1 \leq \boldsymbol{\gamma} \leq \boldsymbol{\nu}$.  Moreover, it is straightforward to verify that in each of these partitions that appear in $p_{\boldsymbol{\nu}-1}$, there exists a unique $\boldsymbol{l}_j$ such that $|\boldsymbol{l}_j|=|\boldsymbol{\nu}|-1$ and as a consequence the corresponding integer $k_j$ is equal to $1$. We remark that, since by definition the elements of $p_{\boldsymbol{\nu}-1}^c$ satisfy $|\boldsymbol{l}_j| < |\boldsymbol{\nu}|-1$, it follows that in $p_{\boldsymbol{\nu}-1}^c$ there are no terms of type $D^{\boldsymbol{\nu}}u_\varepsilon$. With this in mind, we can rewrite the term $R_{1,1}(J_2)$ as follows
\begin{equation*}
    \begin{split}
R_{1,1}(J_2)&=q\int_{B_{2R}} \sum_{\boldsymbol{\nu-1}\leq \boldsymbol{\gamma}\leq \boldsymbol{\nu}}\sum_{r,s=1}^ {|\boldsymbol{\gamma}|} \sum_{p_{\boldsymbol{\nu}-1}} (\varepsilon+|\nabla u_\varepsilon|^2)^{(m-2+k)q-2r}|D^{\boldsymbol{\nu}-1}(|\nabla u_\varepsilon|^2)|^2\\
&\qquad\qquad\qquad\times\prod_{j=1}^{s-1} {\left(D^{\boldsymbol{l}_j}(|\nabla u_\varepsilon|^2)\right)^{2{k}_j}} |D^{\boldsymbol{\nu}-\boldsymbol{\gamma}}(\nabla u_\varepsilon)|^2  |D^{\boldsymbol\nu}u_{\varepsilon}|^{2q-2}\psi^2\,dx
        \\&\qquad +q\int_{B_{2R}} \sum_{\boldsymbol{0}< \boldsymbol{\gamma}\leq \boldsymbol{\nu}}\sum_{r,s=1}^ {|\boldsymbol{\gamma}|} \sum_{p^c_{\boldsymbol{\nu}-1}} (\varepsilon+|\nabla u_\varepsilon|^2)^{(m-2+k)q-2r}\prod_{j=1}^{s} {\left(D^{\boldsymbol{l}_j}(|\nabla u_\varepsilon|^2)\right)^{2{k}_j}} \\&\qquad\qquad\qquad\times |D^{\boldsymbol{\nu}-\boldsymbol{\gamma}}(\nabla u_\varepsilon)|^2  |D^{\boldsymbol\nu}u_{\varepsilon}|^{2q-2}\psi^2\,dx.
    \end{split}
\end{equation*}
Observe that the term $D^{\boldsymbol{\nu}-\boldsymbol{\gamma}}(\nabla u_\varepsilon)$, appearing in the second integral on the right-hand side of $R_{1,1}(J_2)$, contains a derivative of order $m$ only if $|\boldsymbol{\gamma}|=1$. In particular, in this case $r=s=1$, $|l_1|=|\gamma|=1$ and consequently $k_1=1$. Therefore, we shall treat this case separately from the others.
Moreover, using the Leibniz rule for the term $D^{\boldsymbol{\nu}-1}(|\nabla u_\varepsilon|^2)$, we bound the term $R_{1,1}(J_2)$ from above by  
\begin{equation*}
    \begin{split}
         &q\int_{B_{2R}} \sum_{\boldsymbol{\nu-1}< \boldsymbol{\gamma}\leq \boldsymbol{\nu}}\sum_{r,s=1}^{|\boldsymbol{\gamma}|} \sum_{p_{\boldsymbol{\nu}-1}} (\varepsilon+|\nabla u_\varepsilon|^2)^{(m-2+k)q-2r}\sum_{\boldsymbol{0}<\boldsymbol{\mu}<\boldsymbol{\nu-1}}|D^{\boldsymbol{\nu}-1-\boldsymbol{\mu}}(\nabla u_\varepsilon)|^2|D^{\boldsymbol{\mu}}(\nabla u_\varepsilon)|^2\\
        &\qquad\qquad\qquad\times\prod_{j=1}^{s-1} {\left(D^{\boldsymbol{l}_j}(|\nabla u_\varepsilon|^2)\right)^{2{k}_j}} |D^{\boldsymbol{\nu}-\boldsymbol{\gamma}}(\nabla u_\varepsilon)|^2  |D^{\boldsymbol\nu}u_{\varepsilon}|^{2q-2}\psi^2\,dx
        \\&\quad+q\int_{B_{2R}} \sum_{\boldsymbol{\nu-1}< \boldsymbol{\gamma}\leq \boldsymbol{\nu}}\sum_{r,s=1}^ {|\boldsymbol{\gamma}|} \sum_{p_{\boldsymbol{\nu}-1}} (\varepsilon+|\nabla u_\varepsilon|^2)^{(m-2+k)q+1-2r}\\
        &\qquad\qquad\qquad\times \prod_{j=1}^{s-1} {\left(D^{\boldsymbol{l}_j}(|\nabla u_\varepsilon|^2)\right)^{2{k}_j}} |D^{\boldsymbol{\nu}-\boldsymbol{\gamma}}(\nabla u_\varepsilon)|^2  |D^{\boldsymbol\nu}u_{\varepsilon}|^{2q-2}|D^{\boldsymbol\nu-1}(\nabla u_{\varepsilon})|^2\psi^2\,dx
        \\&\quad+q\int_{B_{2R}}  (\varepsilon+|\nabla u_\varepsilon|^2)^{(m-2+k)q-1} |D^2 u_\varepsilon|^2   |D^{\boldsymbol\nu}u_{\varepsilon}|^{2q-2}|D^{\boldsymbol\nu-1}(\nabla u_{\varepsilon})|^2\psi^2\,dx
        \\&\quad+q\int_{B_{2R}} \sum_{|\boldsymbol{\gamma}|\neq 0,1}\sum_{r,s=1}^ {|\boldsymbol{\gamma}|} \sum_{p^c_{\boldsymbol{\nu}-1}} (\varepsilon+|\nabla u_\varepsilon|^2)^{(m-2+k)q-2r}\prod_{j=1}^{s} {\left(D^{\boldsymbol{l}_j}(|\nabla u_\varepsilon|^2)\right)^{2{k}_j}} \\&\qquad\qquad\qquad\times |D^{\boldsymbol{\nu}-\boldsymbol{\gamma}}(\nabla u_\varepsilon)|^2  |D^{\boldsymbol\nu}u_{\varepsilon}|^{2q-2}\psi^2\,dx\\
        &=:R_{1,1,1}(J_2)+R_{1,1,2}(J_2)+R_{1,1,3}(J_2)+R_{1,1,4}(J_2).
    \end{split}
\end{equation*}

We prove in detail the estimate for the term $R_{1,1,4}(J_2)$, being the others similar.  Consider $\hat s,q,\zeta$ as in the statement. Now, we write 
 \begin{equation*}
 \begin{split}
     (\varepsilon+|\nabla u_\varepsilon|^2)^{-2r}&= (\varepsilon+|\nabla u_\varepsilon|^2)^{|\gamma|-2r} (\varepsilon+|\nabla u_\varepsilon|^2)^{-|\gamma|}
     \\& =(\varepsilon+|\nabla u_\varepsilon|^2)^{\sum_j(|\boldsymbol{l}_j|-2)k_j} (\varepsilon+|\nabla u_\varepsilon|^2)^{-|\gamma|},
      \end{split}
 \end{equation*}
where we have used $|\boldsymbol{\gamma}|-2r=\sum_j(|\boldsymbol{l}_j|-2)k_j$, see \eqref{set}. We obtain

\begin{equation*}
    \begin{split}
R_{1,1,4}(J_2)&=q\int_{B_{2R}}  (\varepsilon+|\nabla u_\varepsilon|^2)^{(m-2+k)(q-\hat s)}|D^{\boldsymbol\nu}u_{\varepsilon}|^{2q-2\hat s}\\&\qquad \times\sum_{|\boldsymbol{\gamma}|\neq 0,1}\sum_{r,s=1}^ {|\boldsymbol{\gamma}|} \sum_{p_{\boldsymbol{\nu}-1}^c}\prod_{j=1}^{s} (\varepsilon+|\nabla u_\varepsilon|^2)^{(|\boldsymbol{l}_j|-2)k_j}{\left(D^{\boldsymbol{l}_j}(|\nabla u_\varepsilon|^2)\right)^{2{k}_j}}\\&\qquad \times(\varepsilon+|\nabla u_\varepsilon|^2)^{|\boldsymbol{\nu}|-|\boldsymbol{\gamma}|-1} |D^{\boldsymbol{\nu}-\boldsymbol{\gamma}}(\nabla u_\varepsilon)|^2 (\varepsilon+|\nabla u_\varepsilon|^2)^{(|\boldsymbol{\nu}|-2)(\hat s-1)}|D^{\boldsymbol\nu}u_{\varepsilon}|^{2\hat s-2}\psi^2\,dx,
\end{split}
\end{equation*}
where we have used 

\begin{equation*}
    \begin{split}
        &(\varepsilon+|\nabla u_\varepsilon|^2)^{-|\boldsymbol{\gamma}|}|D^{\boldsymbol{\nu}-\boldsymbol{\gamma}}(\nabla u_\varepsilon)|^2  |D^{\boldsymbol\nu}u_{\varepsilon}|^{2q-2}
        \\&\quad=(\varepsilon+|\nabla u_\varepsilon|^2)^{-|\boldsymbol{\gamma}|}|D^{\boldsymbol{\nu}-\boldsymbol{\gamma}}(\nabla u_\varepsilon)|^2  |D^{\boldsymbol\nu}u_{\varepsilon}|^{2q-2\hat s}|D^{\boldsymbol\nu}u_{\varepsilon}|^{2\hat s-2}
        \\&\quad=(\varepsilon+|\nabla u_\varepsilon|^2)^{-(|\boldsymbol{\nu}|-2)(\hat s-1)-|\boldsymbol{\nu}|+1}|D^{\boldsymbol\nu}u_{\varepsilon}|^{2q-2\hat s}(\varepsilon+|\nabla u_\varepsilon|^2)^{|\boldsymbol{\nu}|-|\boldsymbol{\gamma}|-1}|D^{\boldsymbol{\nu}-\boldsymbol{\gamma}}(\nabla u_\varepsilon)|^2\\&\qquad\qquad\times   (\varepsilon+|\nabla u_\varepsilon|^2)^{(|\boldsymbol{\nu}|-2)(\hat s-1)}|D^{\boldsymbol\nu}u_{\varepsilon}|^{2\hat s-2}
    \end{split}
\end{equation*}
and, since $\hat{s}=1/k$ and $|\boldsymbol{\nu}|=m$,
$$-(|\boldsymbol{\nu}|-2)(\hat s-1)-|\boldsymbol{\nu}|+1=-(m-2+k)\hat s.$$
By using H\"older inequality with exponents $(\zeta/2, \zeta/(\zeta-2))$, we deduce

\begin{equation*}
    \begin{split}
R_{1,1,4}(J_2)& \lesssim q\Big(\int_{B_{2R}}  (\varepsilon+|\nabla u_\varepsilon|^2)^{\frac{\zeta (m-2+k)(q-\hat s)}{2}}|D^{\boldsymbol\nu}u_{\varepsilon}|^{\zeta(q-\hat s)}\psi^\zeta\,dx\Big)^{\frac 2\zeta}\\&\ \times\Big(\int_{B_{2R}}\sum_{|\boldsymbol{\gamma}|\neq 0,1}\sum_{r,s=1}^ {|\boldsymbol{\gamma}|} \sum_{p_{\boldsymbol{\nu}-1}^c} \prod_{j=1}^{s} (\varepsilon+|\nabla u_\varepsilon|^2)^{(|\boldsymbol{l}_j|-2)k_j\frac{\zeta}{\zeta-2}}{\left(D^{\boldsymbol{l}_j}(|\nabla u_\varepsilon|^2)\right)^{2{k}_j\frac{\zeta}{\zeta-2}}}\\&\ \times(\varepsilon+|\nabla u_\varepsilon|^2)^{(|\boldsymbol{\nu}|-|\boldsymbol{\gamma}|-1)\frac{\zeta}{\zeta-2}} |D^{\boldsymbol{\nu}-\boldsymbol{\gamma}}(\nabla u_\varepsilon)|^{2\frac{\zeta}{\zeta-2}} \\
&\times (\varepsilon+|\nabla u_\varepsilon|^2)^{(|\boldsymbol{\nu}|-2)(\hat s-1)\frac{\zeta}{\zeta-2}}|D^{\boldsymbol\nu}u_{\varepsilon}|^{(2\hat s-2)\frac{\zeta}{\zeta-2}}\,dx\Big)^{\frac {\zeta-2}{\zeta}},
\end{split}
\end{equation*}
one concludes by using iterative H\"older inequalities and Lemma \ref{sisso}. One has to reason exactly in the same way for the remaining terms $R_1(J_3)$,  $R_2(J_2)$, $R_2(J_3)$ and $R_{1,2}(J_2)$.
\end{proof}

Let $\boldsymbol{\nu}=(\nu_1,...,\nu_n)$ be a multi-index of order $m$. We consider the following function 
\begin{equation}\label{g}
    g_\varepsilon:=(\varepsilon+|\nabla u_\varepsilon|^2)^{\frac{(m-2+k)}{2}}|D^{\boldsymbol{\nu}}u_\varepsilon|.
\end{equation}
The rest of this section is devoted to the proof of the following fundamental proposition

\begin{prop}\label{dinodino}
   Let $\Omega$ be a domain in $\R^n$, $n\ge 3$, and $B_{2R}\subset\subset \Omega$. Let $u_\varepsilon$ be a weak solution of the regularized problem \eqref{eq:problregol}. Assume $f\in C^{m}(\Omega)$.
   For $0< k\leq  1$ fixed, there exists  $\mathfrak{C}:=\mathfrak{C}(k,l,n,m)>0$  such that if $|p-2|<\mathfrak{C}$ then the following holds true. For any  $q\ge  1/k$ and any $\zeta$ such that $2l/(l-1)<\zeta<2^{*}$, with $l>n/2$, there exists $\mathcal{C}>0$ depending on $k,n,l$,$R,p$, $\| \nabla u\|_{L^{\infty}(B_{2R})}$ and $\|f\|_{W^{m,l}(B_{2R})}$ such that
\begin{align}\label{cane477}
    \begin{split}
    \|g_{\varepsilon}\|_{L^{2^*q}(B_{h'})}^q\leq \mathcal{C}\frac{q}{h-h'}\|g_{\varepsilon}\|_{L^{\zeta(q-\hat{s})}(B_h)}^{q-\hat{s}}, \quad \hat{s}:=1/k,
    \end{split}
\end{align}
for any $0<h'<h<R$  and where $g_{\varepsilon}$ is defined in \eqref{g}.

\end{prop}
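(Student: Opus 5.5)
We will test the $m$-linearized equation \eqref{eq:mlinearizzato} with the function $\varphi$ in \eqref{test1}, where $\psi\in C^\infty_c(B_h)$ is a cutoff with $\psi\equiv1$ on $B_{h'}$ and $|\nabla\psi|\le 2/(h-h')$. We then split \eqref{eq:mlinearizzato} into three parts: the principal part (the $\boldsymbol\gamma=\boldsymbol 0$ summand together with the $p_1(\boldsymbol\nu,1)$ summand, paired with $J_1$), the terms paired with $J_2+J_3$ (controlled by Lemma \ref{pantusa1}), and the source term $\int D^{\boldsymbol\nu}f\,\varphi$.

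\textbf{Principal part.} Set $w:=D^{\boldsymbol\nu}u_\varepsilon$. The $\boldsymbol\gamma=\boldsymbol0$ term gives $(\varepsilon+|\nabla u_\varepsilon|^2)^{\frac{p-2}{2}}(\nabla w,J_1)$. The $p_1(\boldsymbol\nu,1)$ term gives, after extracting the top-order piece $2(D^{\boldsymbol\nu}\nabla u_\varepsilon,\nabla u_\varepsilon)$ of $D^{\boldsymbol\nu}|\nabla u_\varepsilon|^2$, the quantity $(p-2)(\varepsilon+|\nabla u_\varepsilon|^2)^{\frac{p-4}{2}}(\nabla w,\nabla u_\varepsilon)(\nabla u_\varepsilon,J_1)$. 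The remaining pieces are exactly the $\boldsymbol\mu\neq\boldsymbol0$ sums of Lemma \ref{pantusa1}; paired with $J_1$ they are handled by Young's inequality, as explained below. The two top-order terms combine into
\[
(2q-1)\int (\varepsilon+|\nabla u_\varepsilon|^2)^{(m-2+k)q}|w|^{2q-2}\big(|\nabla w|^2+(p-2)\tfrac{(\nabla w,\nabla u_\varepsilon)^2}{\varepsilon+|\nabla u_\varepsilon|^2}\big)\psi^2 ,
\]
which is bounded from below by $(2q-1)\min\{1,p-1\}\int(\varepsilon+|\nabla u_\varepsilon|^2)^{(m-2+k)q}|w|^{2q-2}|\nabla w|^2\psi^2$. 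We then relate this to $|\nabla(g_\varepsilon^{q}\psi)|^2$. Differentiating $g_\varepsilon^q$ produces the desired term $q^2(\ldots)|w|^{2q-2}|\nabla w|^2$, plus a term with $D^2u_\varepsilon\nabla u_\varepsilon$ and a term with $\nabla\psi$. The last two are lower order in the sense of Lemma \ref{pantusa1}, so they will be bounded by the right-hand side of \eqref{pantusa} after Hölder with $(\zeta/2,\zeta/(\zeta-2))$ and Lemma \ref{sisso}. This yields
\[
\int|\nabla(g_\varepsilon^q\psi)|^2\lesssim q\cdot(\text{coercive term})+q^2\,\mathcal{C}\,\Big(\int g_\varepsilon^{\zeta(q-\hat s)}(\psi^\zeta+|\nabla\psi|^\zeta)\Big)^{2/\zeta}.
\]

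\textbf{Mixed terms and source.} The mixed terms pairing lower-order coefficients with $J_1$ each contain one factor $\nabla w$. We will absorb them via Young's inequality $ab\le\delta a^2+C_\delta b^2$ into a small fraction of the coercive term; the remainders are of the same type as the integrands of Lemma \ref{pantusa1} and are bounded in the same way. This step is where smallness of $|p-2|<\mathfrak C$ enters. It is needed because a coefficient $|p-2|$ multiplies a genuinely top-order contribution: since the lemma's $p_1(\boldsymbol\nu,1)^c$ sets include $|\boldsymbol\gamma|=1$ pieces with $D^{\boldsymbol\nu}\nabla u_\varepsilon$, these must be moved to the left. For the source term we write $|\varphi|\le(\varepsilon+|\nabla u_\varepsilon|^2)^{\ldots}|w|^{2q-1}\psi^2$, factor out $g_\varepsilon^{2(q-\hat s)}$, and apply Hölder with exponents $(l,\zeta/2,\cdot)$; the constraint $\zeta>2l/(l-1)$ is exactly what makes this possible with $D^{\boldsymbol\nu}f\in L^l$. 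The leftover factor $(\varepsilon+|\nabla u_\varepsilon|^2)^{\ldots}|w|^{2\hat s-1}$ is bounded in high $L^t$ norms by Lemma \ref{sisso} (with $q$ there large), using also $\hat s\ge1$.

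\textbf{Conclusion and main obstacle.} Finally we apply the Sobolev inequality in $\R^n$, $n\ge3$: $\|g_\varepsilon^q\psi\|_{L^{2^*}}^2\le C\|\nabla(g_\varepsilon^q\psi)\|_{L^2}^2$. Together with $\psi\equiv1$ on $B_{h'}$ and $\psi^\zeta+|\nabla\psi|^\zeta\lesssim (h-h')^{-\zeta}$ on $B_h$, and after taking square roots, this gives \eqref{cane477} with the factor $q/(h-h')$. The main obstacle is the bookkeeping in the absorption step. The weights must be arranged so that every lower-order product carries exactly $g_\varepsilon^{2(q-\hat s)}$ times factors controlled uniformly in $\varepsilon$ by Lemma \ref{sisso}; this is what the choice $\hat s=1/k$ achieves through the identity $-(m-2)(\hat s-1)-m+1=-(m-2+k)\hat s$. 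At the same time, the $q$-dependence must stay linear after dividing by the coercive factor $(2q-1)$.
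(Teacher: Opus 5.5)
Your route is the same as the paper's. You test with \eqref{test1} and extract the coercive quantity, with constant $(2q-1)\min\{1,p-1\}$, from two sources: the $\boldsymbol\gamma=\boldsymbol 0$ summand, and the top piece $2(D^{\boldsymbol\nu}\nabla u_\varepsilon,\nabla u_\varepsilon)$ of the $\boldsymbol\gamma=\boldsymbol\nu$, $r=s=1$ summand. Terms linear in $D^{\boldsymbol\nu}\nabla u_\varepsilon$ are absorbed by Young, the rest is bounded with Lemma \ref{pantusa1} and Lemma \ref{sisso}, the source is treated by H\"older with $(l,\zeta/2,\cdot)$, and you conclude with Sobolev. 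Applying Sobolev to $g_\varepsilon^q\psi\in W^{1,2}_0(B_h)$ is a slightly cleaner variant of the paper's inequality on $B_{h'}$, which also has to control $\|g_\varepsilon^q\|_{L^2(B_{h'})}$. Two points of your bookkeeping are wrong, however.

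\textbf{First point.} You assign \emph{all} terms paired with $J_2+J_3$ to Lemma \ref{pantusa1}, but the lemma excludes exactly the two principal coefficients:
\[
\int (\varepsilon+|\nabla u_\varepsilon|^2)^{\frac{p-2}{2}}\big(D^{\boldsymbol\nu}\nabla u_\varepsilon,J_2+J_3\big)\,dx
\quad\text{and}\quad
2\int D^1h\,\big(D^{\boldsymbol\nu}\nabla u_\varepsilon,\nabla u_\varepsilon\big)\big(\nabla u_\varepsilon,J_2+J_3\big)\,dx .
\]
These are the paper's $R_{I_1}$ and the first line of $R^{m+1}_{I_2}$. They contain the $(m+1)$-th order derivative $D^{\boldsymbol\nu}\nabla u_\varepsilon$, so the right-hand side of \eqref{pantusa} cannot bound them. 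They must instead be absorbed into the coercive term, as in \eqref{termine3piu4}, \eqref{termine1}, \eqref{termine2}.

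This is also where the $q$-dependence is most delicate. The $J_2$-coefficient is $2(m-2+k)q+2-p\sim q$, so Young must be used as $q\,ab\le\theta q\,a^2+C(\theta,p,m,k)\,q\,b^2$. This leaves $q\int(\varepsilon+|\nabla u_\varepsilon|^2)^{(m-2+k)q-1}|D^{\boldsymbol\nu}u_\varepsilon|^{2q}|D^2u_\varepsilon|^2\psi^2\,dx$, which is then handled like $R_{1,2}(J_2)$ in Lemma \ref{pantusa1}. Your Young step covers this once the terms are reassigned.

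\textbf{Second point.} Your account of where $|p-2|<\mathfrak C$ enters is incorrect. No summand indexed by $p_1(\boldsymbol\nu,1)^c$ contains $D^{\boldsymbol\nu}\nabla u_\varepsilon$ in its coefficient. For $|\boldsymbol\gamma|=1$ the factor is $D^{\boldsymbol\nu-\boldsymbol\gamma}\nabla u_\varepsilon$, an $m$-th order derivative different from $D^{\boldsymbol\nu}u_\varepsilon$. That is why the paper isolates it inside the proof of Lemma \ref{pantusa1}; see also the remark preceding that lemma. $D^{\boldsymbol\nu}\nabla u_\varepsilon$ enters only through $J_1$.

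Consequently no absorption needs $|p-2|$ small. The two quadratic top-order contributions combine into the coercive term with constant $(2q-1)\min\{1,p-1\}$. Every linear one is absorbed by Young with $\theta=\theta(p)$, whatever its coefficient, including $D^1h\sim(p-2)$.

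Smallness of $|p-2|$ is needed only because Lemma \ref{sisso} must be applied with the large integrability exponents produced by the iterated H\"older inequalities. This happens inside Lemma \ref{pantusa1} and in the source estimate, and the admissible range of $p$ in Lemma \ref{sisso} shrinks as the exponent grows. You already invoke Lemma \ref{sisso} this way for the source term, so the argument survives, but the stated justification should be corrected.
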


\begin{proof}
    
By Sobolev embedding $\|g_{\varepsilon}\|_{L^{2^*q}(B_{h'})}^q\leq C_S (\| \nabla g^q_{\varepsilon}\|_{L^{2}(B_{h'})}+\|  g^q_{\varepsilon}\|_{L^{2}(B_{h'})})$. Therefore for our purposes it is important to estimate the $L^2$-norm of $\nabla g^q_\varepsilon$. We have
\begin{equation}\label{gradiente_di_g}
    \begin{split}
        \nabla g_\varepsilon^q&=q(\varepsilon+|\nabla u_\varepsilon|^2)^{\frac{q(m-2+k)}{2}}|D^{\boldsymbol{\nu}}u_\varepsilon|^{q-2}D^{\boldsymbol{\nu}}u_\varepsilon D^{\boldsymbol{\nu}}(\nabla u_\varepsilon) 
        \\&\qquad+q(m-2+k)(\varepsilon+|\nabla u_\varepsilon|^2)^{\frac{q(m-2+k)-2}{2}}|D^{\boldsymbol{\nu}}u_\varepsilon|^{q}D^2u_\varepsilon\nabla u_\varepsilon.
    \end{split}
\end{equation}
We want to reconstruct the $L^2$-norm of \eqref{gradiente_di_g} from \eqref{eq:mlinearizzato}.
In \eqref{eq:mlinearizzato}, the only summands in which derivatives of order $m+1$ appear quadratically are those involving $J_1$ and given by the cases $\boldsymbol\gamma = 0$ and $\boldsymbol\gamma = \boldsymbol\nu$.  For this reason we split the sum as follows
\begin{equation}\label{santuzzo}
    \begin{split}
         &I_1+I_2+R:=\int_{B_{2R}} (\varepsilon+|\nabla u_\varepsilon|^2)^{\frac{p-2}{2}}(D^{\boldsymbol{\nu}}(\nabla u_\varepsilon), \nabla\varphi)\,dx
         \\&
        \quad \ +\int_{B_{2R}} \sum_{r=1}^ {|\boldsymbol{\nu}|} D^{r} h \sum_{s=1}^{|\boldsymbol{\nu}|} \sum_{p_s(\boldsymbol{\nu},r)}(\boldsymbol{\nu}!) \prod_{j=1}^{s} \frac{\left(D^{\boldsymbol{l}_j}(|\nabla u_\varepsilon|^2)\right)^{{k}_j}}{k_j!(\boldsymbol{l_j}!)^{k_j}} (\nabla u_\varepsilon, \nabla \varphi)\,dx
        \\&\quad \ +\sum_{0\neq \boldsymbol{\gamma}< \boldsymbol{\nu}} \int_{B_{2R}} \tbinom{\boldsymbol{\nu}}{\boldsymbol{\gamma}}  \sum_{r=1}^ {|\boldsymbol{\gamma}|} D^{r} h \sum_{s=1}^{|\boldsymbol{\gamma}|} \sum_{p_s(\boldsymbol{\gamma},r)}(\boldsymbol{\gamma}!) \prod_{j=1}^{s} \frac{\left(D^{\boldsymbol{l}_j}(|\nabla u_\varepsilon|^2)\right)^{{k}_j}}{k_j!(\boldsymbol{l_j}!)^{k_j}} (D^{\boldsymbol{\nu}-\boldsymbol{\gamma}}(\nabla u_\varepsilon), \nabla\varphi)\,dx 
        \\&\qquad\qquad\quad \ = \int_{B_{2R}} D^{\boldsymbol{\nu}} (f) \varphi \,dx.
    \end{split}
\end{equation}
Moreover, by using \eqref{gradiente_test}, we rewrite the term  $I_1$ 
\begin{equation}\label{termineI_1}
    \begin{split}
        &I_1=(2q-1)\int_{B_{2R}} (\varepsilon+|\nabla u_\varepsilon|^2)^{(m-2+k)q}|D^{\boldsymbol\nu} u_{\varepsilon}|^{2q-2}|D^{\boldsymbol\nu}(\nabla u_{\varepsilon})|^{2}\psi^2 \,dx+R_{I_1},
\end{split}
\end{equation}
where 
\begin{equation}\label{termineR_I_1}
    \begin{split}
        R_{I_1}:=\int_{B_{2R}} (\varepsilon+|\nabla u_\varepsilon|^2)^{\frac{p-2}{2}}(D^{\boldsymbol{\nu}}(\nabla u_\varepsilon), J_2+J_3)\,dx.
    \end{split}
\end{equation}
 Consider now the term $I_2$. By separating the cases $s=1$ and $s\neq 1$, we get
\begin{equation}\label{seperare_casi}
    \begin{split}
  I_2&=\int_{B_{2R}} \sum_{r=1}^ {|\boldsymbol{\nu}|} D^{r} h  \sum_{p_1(\boldsymbol{\nu},r)}(\boldsymbol{\nu}!)  \frac{\left(D^{\boldsymbol{l}_1}(|\nabla u_\varepsilon|^2)\right)^{{k}_1}}{k_1!(\boldsymbol{l_1}!)^{k_1}} (\nabla u_\varepsilon, J_1+J_2+J_3)\,dx
  \\&\qquad+\int_{B_{2R}} \sum_{r=1}^ {|\boldsymbol{\nu}|} D^{r} h \sum_{s=2}^{|\boldsymbol{\nu}|} \sum_{p_s(\boldsymbol{\nu},r)}(\boldsymbol{\nu}!) \prod_{j=1}^{s} \frac{\left(D^{\boldsymbol{l}_j}(|\nabla u_\varepsilon|^2)\right)^{{k}_j}}{k_j!(\boldsymbol{l_j}!)^{k_j}} (\nabla u_\varepsilon, J_1+J_2+J_3)\,dx,    
    \end{split}
\end{equation}
where  
\begin{equation*}
    p_1(\boldsymbol{\nu},{r})= \Big\{({k}_1;\boldsymbol{l}_1):  0\prec \boldsymbol{l}_1;\quad k_1>0;\quad k_1 ={r},\quad  k_1 \boldsymbol{l}_1 = \boldsymbol{\nu}\Big\}.
\end{equation*}
Once again, considering separately the case $r = 1$ and the remaining ones in the first term of the previous equation, and since $ p_1(\boldsymbol{\nu},1)=(1,\boldsymbol{\nu})$, we obtain
\begin{equation*}\label{eq:I_31}
    \begin{split}
  I_2&=\int_{B_{2R}}  D^{1} h \cdot\left(D^{\boldsymbol{\nu}}(|\nabla u_\varepsilon|^2)\right) (\nabla u_\varepsilon, J_1+J_2+J_3)\,dx
  \\&\qquad+\int_{B_{2R}} \sum_{r=2}^ {|\boldsymbol{\nu}|} D^{r} h  \sum_{p_1(\boldsymbol{\nu},r)}(\boldsymbol{\nu}!)  \frac{\left(D^{\boldsymbol{l}_1}(|\nabla u_\varepsilon|^2)\right)^{{k}_1}}{k_1!(\boldsymbol{l_1}!)^{k_1}} (\nabla u_\varepsilon, J_1+J_2+J_3)\,dx
  \\&\qquad+\int_{B_{2R}} \sum_{r=1}^ {|\boldsymbol{\nu}|} D^{r} h \sum_{s=2}^{|\boldsymbol{\nu}|} \sum_{p_s(\boldsymbol{\nu},r)}(\boldsymbol{\nu}!) \prod_{j=1}^{s} \frac{\left(D^{\boldsymbol{l}_j}(|\nabla u_\varepsilon|^2)\right)^{{k}_j}}{k_j!(\boldsymbol{l_j}!)^{k_j}} (\nabla u_\varepsilon, J_1+J_2+J_3)\,dx.    
    \end{split}
\end{equation*}
Using the Leibniz rule \eqref{Leibiniz_norma} with $\boldsymbol{\alpha}=\boldsymbol{\nu}$, we infer that

\begin{equation}\label{RestoI_2}
    \begin{split}
  I_2&=2\int_{B_{2R}}  D^{1} h \cdot   (D^{\boldsymbol{\nu}}(\nabla u_\varepsilon),\nabla u_\varepsilon) (\nabla u_\varepsilon, J_1+J_2+J_3)\,dx
  \\&\qquad+\int_{B_{2R}}  D^{1} h \cdot\sum_{\boldsymbol{0}\neq \boldsymbol{\mu}< \boldsymbol{\nu}}  \tbinom{\boldsymbol{\nu}}{\boldsymbol{\mu}}\,  (D^{\boldsymbol{\nu}-\boldsymbol{\mu}}(\nabla u_\varepsilon),D^{\boldsymbol{\mu}}(\nabla u_\varepsilon)) (\nabla u_\varepsilon, J_1+J_2+J_3)\,dx
  \\&\qquad+\int_{B_{2R}} \sum_{r=2}^ {|\boldsymbol{\nu}|} D^{r} h  \sum_{p_1(\boldsymbol{\nu},r)}(\boldsymbol{\nu}!)  \frac{\left(D^{\boldsymbol{l}_1}(|\nabla u_\varepsilon|^2)\right)^{{k}_1}}{k_1!(\boldsymbol{l_1}!)^{k_1}} (\nabla u_\varepsilon, J_1+J_2+J_3)\,dx
  \\&\qquad+\int_{B_{2R}} \sum_{r=1}^ {|\boldsymbol{\nu}|} D^{r} h \sum_{s=2}^{|\boldsymbol{\nu}|} \sum_{p_s(\boldsymbol{\nu},r)}(\boldsymbol{\nu}!) \prod_{j=1}^{s} \frac{\left(D^{\boldsymbol{l}_j}(|\nabla u_\varepsilon|^2)\right)^{{k}_j}}{k_j!(\boldsymbol{l_j}!)^{k_j}} (\nabla u_\varepsilon, J_1+J_2+J_3)\,dx.
    \end{split}
\end{equation}
In order to isolate the addend in which the derivatives of order $m+1$ appear quadratically, we write
\begin{equation}\label{termineI_2}
    I_2=2\int_{B_{2R}}  D^{1} h \cdot   (D^{\boldsymbol{\nu}}(\nabla u_\varepsilon),\nabla u_\varepsilon) (\nabla u_\varepsilon, J_1)\,dx+R_{I_2}^{m+1}+R_{I_2}^{\leq m},
\end{equation}
where we defined the integrals depending linearly on $D^{\boldsymbol{\nu}}(\nabla u_\varepsilon)$
\begin{equation}\label{Restosarracino}
    \begin{split}
  R_{I_2}^{m+1}&:=2\int_{B_{2R}}  D^{1} h \cdot   (D^{\boldsymbol{\nu}}(\nabla u_\varepsilon),\nabla u_\varepsilon) (\nabla u_\varepsilon, J_2+J_3)\,dx
  \\&\qquad+\int_{B_{2R}}  D^{1} h \cdot\sum_{\boldsymbol{0}\neq \boldsymbol{\mu}< \boldsymbol{\nu}}  \tbinom{\boldsymbol{\nu}}{\boldsymbol{\mu}}\,  (D^{\boldsymbol{\nu}-\boldsymbol{\mu}}(\nabla u_\varepsilon),D^{\boldsymbol{\mu}}(\nabla u_\varepsilon)) (\nabla u_\varepsilon, J_1)\,dx
  \\&\qquad+\int_{B_{2R}} \sum_{r=2}^ {|\boldsymbol{\nu}|} D^{r} h  \sum_{p_1(\boldsymbol{\nu},r)}(\boldsymbol{\nu}!)  \frac{\left(D^{\boldsymbol{l}_1}(|\nabla u_\varepsilon|^2)\right)^{{k}_1}}{k_1!(\boldsymbol{l_1}!)^{k_1}} (\nabla u_\varepsilon, J_1)\,dx
  \\&\qquad+\int_{B_{2R}} \sum_{r=1}^ {|\boldsymbol{\nu}|} D^{r} h \sum_{s=2}^{|\boldsymbol{\nu}|} \sum_{p_s(\boldsymbol{\nu},r)}(\boldsymbol{\nu}!) \prod_{j=1}^{s} \frac{\left(D^{\boldsymbol{l}_j}(|\nabla u_\varepsilon|^2)\right)^{{k}_j}}{k_j!(\boldsymbol{l_j}!)^{k_j}} (\nabla u_\varepsilon, J_1)\,dx.
    \end{split}
\end{equation} 
Using \eqref{santuzzo}, together with \eqref{termineI_1}, \eqref{RestoI_2} and \eqref{termineI_2}, we obtain 
\begin{equation*}
    \begin{split}
        &(2q-1)\int_{B_{2R}} (\varepsilon+|\nabla u_\varepsilon|^2)^{(m-2+k)q}|D^{\boldsymbol\nu} u_{\varepsilon}|^{2q-2}|D^{\boldsymbol\nu}(\nabla u_{\varepsilon})|^{2}\psi^2 \,dx
        \\&\quad\quad +2\int_{B_{2R}}  D^{1} h \cdot   (D^{\boldsymbol{\nu}}(\nabla u_\varepsilon),\nabla u_\varepsilon) (\nabla u_\varepsilon, J_1)\,dx
        \\&\qquad\quad \leq \int_{B_{2R}} |D^{\boldsymbol{\nu}} (f) \varphi| \,dx+|R_{I_1}|+|R_{I_2}^{m+1}|+|R_{I_2}^{\le m}|+|R|.
    \end{split}
\end{equation*}
Developing $D^1 h$, see \eqref{definizione_di_h}, using the expression of $J_1$ (see~\eqref{gradiente_test}), we infer that

\begin{equation}\label{stimaprimaimportante}
    \begin{split}
      &(2q-1)\min\{1,(p-1)\}\int_{B_{2R}} (\varepsilon+|\nabla u_\varepsilon|^2)^{(m-2+k)q}|D^{\boldsymbol\nu}u_{\varepsilon}|^{2q-2}| D^{\boldsymbol\nu}(\nabla u_{\varepsilon})|^2\psi^2\,dx 
      \\&\qquad\qquad\leq \int_{B_{2R}} |D^{\boldsymbol{\nu}} (f) \varphi| \,dx+|R_{I_1}|+|R_{I_2}^{m+1}|+|R_{I_2}^{\le m}|+|R|,
    \end{split}
\end{equation}
where we used the fact that for $p > 2$ the second term on the left-hand side of the previous inequality can be neglected, while for $p < 2$ the estimate follows from the Cauchy–Schwarz inequality.\\
Recall the definition of $\nabla \varphi$ in \eqref{gradiente_test}, and set $R:=R(J_1)+R(J_2)+R(J_3)$ in \eqref{santuzzo}. Now, in all the terms in $R_{I_1},$ $R_{I_2}^{m+1}$ and $R(J_1)$  it appears $D^{\boldsymbol{\nu}}(\nabla u_\varepsilon)$ linearly under the sign of integral. We shall estimate from above each one of these contributions by quantities of the form 
\begin{equation*}
    \mathtt a\int_{B_{2R}} (\varepsilon+|\nabla u_\varepsilon|^2)^{(m-2+k)q}|D^{\boldsymbol\nu}u_{\varepsilon}|^{2q-2}| D^{\boldsymbol\nu}(\nabla u_{\varepsilon})|^2\psi^2\,dx+R_{\leq m},
\end{equation*}
where $\mathtt a\ll 1$ and $R_{\le m}$ are some integrals of functions on which it falls a derivative of order at most $m$.\\
We begin with $R_{I_1}$, see \eqref{termineR_I_1}. Recall \eqref{eq:mlinearizzato}, by using a weighted Young inequality we deduce
\begin{equation}\label{termine3piu4}
    \begin{split}
        |R_{I_1}|&=\Big|(2(m-2+k)q+2-p)\int_{B_{2R}}(\varepsilon+|\nabla u_\varepsilon|^2)^{(m-2+k)q-1}\\&\quad\qquad\qquad\qquad\qquad\qquad\qquad\times |D^{\boldsymbol\nu}u_{\varepsilon}|^{2q-2}D^{\boldsymbol\nu}u_{\varepsilon} (D^{\boldsymbol{\nu}}(\nabla u_\varepsilon),D^2u_\varepsilon\nabla u_\varepsilon)\psi^2\,dx
        \\&\qquad+2\int_{B_{2R}}(\varepsilon+|\nabla u_\varepsilon|^2)^{(m-2+k)q}|D^{\boldsymbol\nu}u_{\varepsilon}|^{2q-2}D^{\boldsymbol\nu}u_{\varepsilon} (D^{\boldsymbol{\nu}}(\nabla u_\varepsilon),\nabla \psi)\psi\,dx\Big|
        \\&\leq (q+1)\theta\int_{B_{2R}} (\varepsilon+|\nabla u_\varepsilon|^2)^{(m-2+k)q}|D^{\boldsymbol\nu}u_{\varepsilon}|^{2q-2}| D^{\boldsymbol\nu}(\nabla u_{\varepsilon})|^2\psi^2\,dx+R_{I_1}^{\le m},
        \end{split}
\end{equation}
where $\theta\ll 1$ will be chosen later and 
\begin{equation*}
    \begin{split}
R_{I_1}^{\le m}&:=qC(p,k,m,\theta)\int_{B_{2R}}(\varepsilon+|\nabla u_\varepsilon|^2)^{(m-2+k)q-1}|D^{\boldsymbol\nu}u_{\varepsilon}|^{2q}|D^2u_\varepsilon|^2\psi^2\,dx
        \\&\qquad\quad+C(p,\theta)\int_{B_{2R}}(\varepsilon+|\nabla u_\varepsilon|^2)^{(m-2+k)q}|D^{\boldsymbol\nu}u_{\varepsilon}|^{2q}|\nabla \psi|^2\,dx, \end{split}
\end{equation*}
where $C(p,\theta), C(p,k,m,\theta)$ are positive constants.\\
We now pass to $R_{I_2}^{m+1}$, see \eqref{Restosarracino}. Recalling the definition of $h$, and by the weighted Young inequality, we have
\begin{equation}\label{termine1}
    \begin{split}
        &\left |2\int_{B_{2R}} D^{1} h \cdot(D^{\boldsymbol{\nu}}(\nabla u_\varepsilon),\nabla u_\varepsilon)  (\nabla u_\varepsilon,J_2)\,dx\right|
        \\&\qquad=\Big|(2(m-2+k)q+2-p)(p-2)\int_{B_{2R}}(\varepsilon+|\nabla u_\varepsilon|^2)^{(m-2+k)q-2}|D^{\boldsymbol\nu}u_{\varepsilon}|^{2q-2}D^{\boldsymbol\nu}u_{\varepsilon}\\&\qquad\qquad\qquad\qquad\qquad\qquad\qquad\qquad\qquad\times(D^{\boldsymbol\nu}(\nabla u_\varepsilon),\nabla u_\varepsilon)(\nabla u_\varepsilon,D^2u_\varepsilon\nabla u_\varepsilon)\psi^2\,dx \Big|
        \\& \qquad\le q\theta\int_{B_{2R}} (\varepsilon+|\nabla u_\varepsilon|^2)^{(m-2+k)q}|D^{\boldsymbol\nu}u_{\varepsilon}|^{2q-2}| D^{\boldsymbol\nu}(\nabla u_{\varepsilon})|^2\psi^2\,dx+R_{I_2,1}^{m+1\rightsquigarrow m},
        \end{split}
\end{equation}
where 
\begin{equation*}
    R_{I_2,1}^{m+1\rightsquigarrow m}:=qC(p,k,m,\theta)\int_{B_{2R}}(\varepsilon+|\nabla u_\varepsilon|^2)^{(m-2+k)q-1}|D^{\boldsymbol\nu}u_{\varepsilon}|^{2q}|D^2u_\varepsilon|^2\psi^2\,dx.
\end{equation*}
Similarly, we deduce
\begin{equation}\label{termine2}
    \begin{split}
        &2\int_{B_{2R}} D^{1} h \cdot(D^{\boldsymbol{\nu}}(\nabla u_\varepsilon),\nabla u_\varepsilon)  (\nabla u_\varepsilon,J_3)\,dx
        \\&\quad=2(p-2)\int_{B_{2R}}(\varepsilon+|\nabla u_\varepsilon|^2)^{(m-2+k)q-1}|D^{\boldsymbol\nu}u_{\varepsilon}|^{2q-2}D^{\boldsymbol\nu}u_{\varepsilon}\\&\ \qquad\qquad\qquad\qquad\qquad\times(D^{\boldsymbol\nu}(\nabla u_\varepsilon),\nabla u_\varepsilon)(\nabla u_\varepsilon,\nabla \psi)\psi\,dx 
        \\& \quad\le \theta\int_{B_{2R}} (\varepsilon+|\nabla u_\varepsilon|^2)^{(m-2+k)q}|D^{\boldsymbol\nu}u_{\varepsilon}|^{2q-2}| D^{\boldsymbol\nu}(\nabla u_{\varepsilon})|^2\psi^2\,dx+R_{I_2,2}^{m+1\rightsquigarrow m},
        \end{split}
\end{equation}
where 
\begin{equation*}
    R_{I_2,2}^{m+1\rightsquigarrow m}:=C(p,\theta)\int_{B_{2R}}(\varepsilon+|\nabla u_\varepsilon|^2)^{(m-2+k)q}|D^{\boldsymbol\nu}u_{\varepsilon}|^{2q}|\nabla \psi|^2\,dx.
\end{equation*}
In a similar way, we obtain 
\begin{equation}\label{termine5}
    \begin{split}
        &\int_{B_{2R}}  D^{1} h \cdot\sum_{\boldsymbol{0}\neq \boldsymbol{\mu}< \boldsymbol{\nu}}  \tbinom{\boldsymbol{\nu}}{\boldsymbol{\mu}}\,  (D^{\boldsymbol{\nu}-\boldsymbol{\mu}}(\nabla u_\varepsilon),D^{\boldsymbol{\mu}}(\nabla u_\varepsilon)) (\nabla u_\varepsilon, J_1)\,dx
        \\&\qquad=(2q-1)((p-2)/2) \int_{B_{2R}} (\varepsilon+|\nabla u_\varepsilon|^2)^{(m-2+k)q-1}|D^{\boldsymbol\nu}u_{\varepsilon}|^{2q-2}
        \\&\qquad\qquad\times \sum_{\boldsymbol{0}\neq \boldsymbol{\mu}< \boldsymbol{\nu}}  \tbinom{\boldsymbol{\nu}}{\boldsymbol{\mu}}\,  (D^{\boldsymbol{\nu}-\boldsymbol{\mu}}(\nabla u_\varepsilon),D^{\boldsymbol{\mu}}(\nabla u_\varepsilon))(D^{\boldsymbol{\nu}}(\nabla u_\varepsilon),\nabla u_\varepsilon)\psi^2\,dx
        \\&\qquad \leq q\theta\int_{B_{2R}} (\varepsilon+|\nabla u_\varepsilon|^2)^{(m-2+k)q}|D^{\boldsymbol\nu}u_{\varepsilon}|^{2q-2}| D^{\boldsymbol\nu}(\nabla u_{\varepsilon})|^2\psi^2\,dx+R_{I_2,3}^{m+1\rightsquigarrow m},
        \end{split}
\end{equation}
where
\begin{equation*}
\begin{split}
    R_{I_2,3}^{m+1\rightsquigarrow m}&:=qC(p,|\boldsymbol{\nu}|,\theta)\int_{B_{2R}} (\varepsilon+|\nabla u_\varepsilon|^2)^{(m-2+k)q-1}|D^{\boldsymbol\nu}u_{\varepsilon}|^{2q-2}\\&\qquad\qquad\qquad\times \sum_{\boldsymbol{0}\neq \boldsymbol{\mu}< \boldsymbol{\nu}}  |D^{\boldsymbol{\nu}-\boldsymbol{\mu}}(\nabla u_\varepsilon)|^2|D^{\boldsymbol{\mu}}(\nabla u_\varepsilon))|^2\psi^2\,dx,
    \end{split}
\end{equation*}
where $C(p,|\boldsymbol{\nu}|,\theta)$ is a positive constant. Recalling the definition of $h$, in particular see \eqref{derivate di h}, and proceeding as in other cases, we deduce
\begin{equation}\label{termine8}
\begin{split}
    &\int_{B_{2R}} \sum_{r=2}^ {|\boldsymbol{\nu}|} D^{r} h  \sum_{p_1(\boldsymbol{\nu},r)}(\boldsymbol{\nu}!)  \frac{\left(D^{\boldsymbol{l}_1}(|\nabla u_\varepsilon|^2)\right)^{{k}_1}}{k_1!(\boldsymbol{l_1}!)^{k_1}} (\nabla u_\varepsilon, J_1)\,dx
    \\&\qquad=(2q-1)\int_{B_{2R}} \sum_{r=2}^ {|\boldsymbol{\nu}|} D^{r} h  \sum_{p_1(\boldsymbol{\nu},r)}(\boldsymbol{\nu}!)  \frac{\left(D^{\boldsymbol{l}_1}(|\nabla u_\varepsilon|^2)\right)^{{k}_1}}{k_1!(\boldsymbol{l_1}!)^{k_1}}\\&\qquad\qquad\times (\nabla u_\varepsilon,D^{\boldsymbol\nu}(\nabla u_{\varepsilon})) (\varepsilon+|\nabla u_\varepsilon|^2)^{\frac{2(m-2+k)q+2-p}{2}}|D^{\boldsymbol\nu}u_{\varepsilon}|^{2q-2}\psi^2\,dx
    \\&\qquad\le q\theta\int_{B_{2R}} (\varepsilon+|\nabla u_\varepsilon|^2)^{(m-2+k)q}|D^{\boldsymbol\nu}u_{\varepsilon}|^{2q-2}| D^{\boldsymbol\nu}(\nabla u_{\varepsilon})|^2\psi^2\,dx+R_{I_2,4}^{m+1\rightsquigarrow m},
   \end{split}    
\end{equation}
where
\begin{equation*}
    \begin{split}
R_{I_2,4}^{m+1\rightsquigarrow m}&:=qC(p,|\boldsymbol{\nu}|,\theta)\int_{B_{2R}} \sum_{r=2}^ {|\boldsymbol{\nu}|} (\varepsilon+|\nabla u_\varepsilon|^2)^{(m-2+k)q+1-2r}  \sum_{p_1(\boldsymbol{\nu},r)}  \left(D^{\boldsymbol{l}_1}(|\nabla u_\varepsilon|^2)\right)^{2{k}_1}\\&\quad\qquad\qquad\qquad\times  |D^{\boldsymbol\nu}u_{\varepsilon}|^{2q-2}\psi^2\,dx.
    \end{split}
\end{equation*}
For the last contribution of $R_{I_2}^{m+1}$, see \eqref{Restosarracino}, we have the following 
\begin{equation}\label{termine11}
\begin{split}
    &\int_{B_{2R}} \sum_{r=1}^ {|\boldsymbol{\nu}|} D^{r} h \sum_{s=2}^{|\boldsymbol{\nu}|} \sum_{p_s(\boldsymbol{\nu},r)}(\boldsymbol{\nu}!) \prod_{j=1}^{s} \frac{\left(D^{\boldsymbol{l}_j}(|\nabla u_\varepsilon|^2)\right)^{{k}_j}}{k_j!(\boldsymbol{l_j}!)^{k_j}} (\nabla u_\varepsilon, J_1)\,dx
    \\&\qquad=(2q-1)\int_{B_{2R}} \sum_{r=1}^ {|\boldsymbol{\nu}|} D^{r} h \sum_{s=2}^{|\boldsymbol{\nu}|} \sum_{p_s(\boldsymbol{\nu},r)}(\boldsymbol{\nu}!) \prod_{j=1}^{s} \frac{\left(D^{\boldsymbol{l}_j}(|\nabla u_\varepsilon|^2)\right)^{{k}_j}}{k_j!(\boldsymbol{l_j}!)^{k_j}}\\& \qquad\qquad\qquad\times (\nabla u_\varepsilon, D^{\boldsymbol\nu}(\nabla u_{\varepsilon})) (\varepsilon+|\nabla u_\varepsilon|^2)^{\frac{2(m-2+k)q+2-p}{2}}|D^{\boldsymbol\nu}u_{\varepsilon}|^{2q-2}\psi^2\,dx
    \\&\qquad\le q\theta\int_{B_{2R}} (\varepsilon+|\nabla u_\varepsilon|^2)^{(m-2+k)q}|D^{\boldsymbol\nu}u_{\varepsilon}|^{2q-2}| D^{\boldsymbol\nu}(\nabla u_{\varepsilon})|^2\psi^2\,dx+R_{I_2,5}^{m+1\rightsquigarrow m},
\end{split}    
\end{equation}
where
\begin{equation*}
    \begin{split}
R_{I_2,5}^{m+1\rightsquigarrow m}&:=qC(p,|\boldsymbol{\nu}|,\theta)\int_{B_{2R}} \sum_{r=1}^ {|\boldsymbol{\nu}|} (\varepsilon+|\nabla u_\varepsilon|^2)^{(m-2+k)q+1-2r}  \sum_{s=2}^{|\boldsymbol{\nu}|} \sum_{p_s(\boldsymbol{\nu},r)} \prod_{j=1}^{s} {\left(D^{\boldsymbol{l}_j}(|\nabla u_\varepsilon|^2)\right)^{2{k}_j}}\\&\qquad\qquad\qquad\qquad\times  |D^{\boldsymbol\nu}u_{\varepsilon}|^{2q-2}\psi^2\,dx.
    \end{split}
\end{equation*}
Concerning $R(J_1)$, see \eqref{santuzzo}, we have
\begin{equation}\label{termine14}
    \begin{split}
         &\sum_{0\neq \boldsymbol{\gamma}< \boldsymbol{\nu}} \int_{B_{2R}} \tbinom{\boldsymbol{\nu}}{\boldsymbol{\gamma}}  \sum_{r=1}^ {|\boldsymbol{\gamma}|} D^{r} h \sum_{s=1}^{|\boldsymbol{\gamma}|} \sum_{p_s(\boldsymbol{\gamma},r)}(\boldsymbol{\gamma}!) \prod_{j=1}^{s} \frac{\left(D^{\boldsymbol{l}_j}(|\nabla u_\varepsilon|^2)\right)^{{k}_j}}{k_j!(\boldsymbol{l_j}!)^{k_j}} (D^{\boldsymbol{\nu}-\boldsymbol{\gamma}}(\nabla u_\varepsilon), J_1)\,dx
         \\&\qquad=(2q-1)\sum_{0\neq \boldsymbol{\gamma}< \boldsymbol{\nu}} \int_{B_{2R}} \tbinom{\boldsymbol{\nu}}{\boldsymbol{\gamma}}  \sum_{r=1}^ {|\boldsymbol{\gamma}|} D^{r} h \sum_{s=1}^{|\boldsymbol{\gamma}|} \sum_{p_s(\boldsymbol{\gamma},r)}(\boldsymbol{\gamma}!) \prod_{j=1}^{s} \frac{\left(D^{\boldsymbol{l}_j}(|\nabla u_\varepsilon|^2)\right)^{{k}_j}}{k_j!(\boldsymbol{l_j}!)^{k_j}} \\&\qquad\qquad\qquad\times (D^{\boldsymbol{\nu}-\boldsymbol{\gamma}}(\nabla u_\varepsilon), D^{\boldsymbol\nu}(\nabla u_{\varepsilon})) (\varepsilon+|\nabla u_\varepsilon|^2)^{\frac{2(m-2+k)q+2-p}{2}}|D^{\boldsymbol\nu}u_{\varepsilon}|^{2q-2}\psi^2\,dx
          \\&\qquad\le q\theta\int_{B_{2R}} (\varepsilon+|\nabla u_\varepsilon|^2)^{(m-2+k)q}|D^{\boldsymbol\nu}u_{\varepsilon}|^{2q-2}| D^{\boldsymbol\nu}(\nabla u_{\varepsilon})|^2\psi^2\,dx+R(J_1)^{\le m},
    \end{split}
\end{equation}
where 
\begin{equation*}
\begin{split}
R(J_1)^{\le m}&:=qC(p,|\boldsymbol{\nu}|,\theta)\int_{B_{2R}}\sum_{0\neq \boldsymbol{\gamma}< \boldsymbol{\nu}} \sum_{r=1}^ {|\boldsymbol{\nu}|} (\varepsilon+|\nabla u_\varepsilon|^2)^{(m-2+k)q-2r} \sum_{s=1}^{|\boldsymbol{\gamma}|} \sum_{p_s(\boldsymbol{\gamma},r)} \prod_{j=1}^{s} {\left(D^{\boldsymbol{l}_j}(|\nabla u_\varepsilon|^2)\right)^{2{k}_j}} \\&\qquad\qquad\qquad\qquad\times |D^{\boldsymbol{\nu}-\boldsymbol{\gamma}}(\nabla u_\varepsilon)|^2  |D^{\boldsymbol\nu}u_{\varepsilon}|^{2q-2}\psi^2\,dx.
\end{split}
\end{equation*}
By collecting all these estimates, see  \eqref{termine3piu4}, \eqref{termine1}, \eqref{termine2}, \eqref{termine5}, \eqref{termine8}, \eqref{termine11} and \eqref{termine14}, in inequality \eqref{stimaprimaimportante}, and by choosing $\theta:=\theta(p)$ sufficiently small, we infer that 

\begin{equation}\label{eq:mlinearizzato1}
    \begin{split}
       F&:=q\int_{B_{2R}} (\varepsilon+|\nabla u_\varepsilon|^2)^{(m-2+k)q}|D^{\boldsymbol\nu}u_{\varepsilon}|^{2q-2}| D^{\boldsymbol\nu}(\nabla u_{\varepsilon})|^2\psi^2\,dx
        \\& \ \lesssim R_{I_1}^{\le m}+
          R_{I_2}^{\le m}+R_{I_2}^{m+1\rightsquigarrow m}+R(J_1)^{\le m}+R(J_2)+R(J_3)+\int_{B_{2R}} |D^{\boldsymbol{\nu}} (f) \varphi | \,dx,
    \end{split}
\end{equation}
where we have set $R_{I_2}^{m+1\rightsquigarrow m}=\sum_{i=1}^5R_{I_2,i}^{m+1\rightsquigarrow m}$.
Note that $R_{I_2}^{\le m}+R(J_2)+R(J_3)$ is exactly the l.h.s. of inequality \eqref{pantusa}. By performing similar computation of Lemma \ref{pantusa1}, which we omit, one can prove that $R_{I_1}^{\le m}+R_{I_2}^{m+1\rightsquigarrow m}+R(J_1)^{\le m}$ is bounded from above by the r.h.s. of \eqref{pantusa1}. Assume for the moment that also $\int_{B_{2R}} |D^{\boldsymbol{\nu}} (f) \varphi | \,dx,$ is bounded from above by the r.h.s. of \eqref{pantusa1}. Altogether we proved that 

\begin{equation*}
    \sqrt{F}\lesssim \sqrt{q} \Big(\int_{B_{2R}}(\varepsilon+|\nabla u_\varepsilon|^2)^\frac{\zeta(m-2+k)(q-\hat s)}{2} |D^{\boldsymbol\nu}u_{\varepsilon}|^{\zeta(q-\hat s)}(\psi^\zeta + |\nabla \psi|^\zeta)\,dx\Big)^{\frac 1\zeta}.
\end{equation*}

Let $h$ and $h'$ be real numbers satisfying $h'<h< R$. Let $\psi(x)$ be the standard cut-off function such that $\psi(x)=1$ in the ball $B_{h'}$, $\psi(x) =0$ in the complement of the ball $B_h$, and $|\nabla \psi|\le 2/(h-h')$. Recalling \eqref{gradiente_di_g}, multiplying by $\sqrt{q}$ and summing the same quantity on both sides of the previous inequality, we obtain
\begin{equation}\label{radici2}
\begin{split}
    &\sqrt{q}\sqrt{ F}+q(m-2+k)\left(\int_{B_{h'}}(\varepsilon+|\nabla u_\varepsilon|^2)^{(m-2+k)q-1}|D^{\boldsymbol\nu}u_{\varepsilon}|^{2q}|D^2u_\varepsilon|^2\psi^2\,dx\right)^{\frac 12}+\|g_\varepsilon^q\|_{L^2(B_{h'})}
    \\&\quad\lesssim q\Big(\int_{B_{2R}}(\varepsilon+|\nabla u_\varepsilon|^2)^\frac{\zeta(m-2+k)(q-\hat s)}{2} |D^{\boldsymbol\nu}u_{\varepsilon}|^{\zeta(q-\hat s)}(\psi^\zeta + |\nabla \psi|^\zeta)\,dx\Big)^{\frac 1\zeta}
    \\&\qquad+q(m-2+k)\left(\int_{B_{h'}}(\varepsilon+|\nabla u_\varepsilon|^2)^{(m-2+k)q-1}|D^{\boldsymbol\nu}u_{\varepsilon}|^{2q}|D^2u_\varepsilon|^2\psi^2\,dx\right)^{\frac 12}+\|g_\varepsilon^q\|_{L^2(B_{h'})},
    \end{split}
\end{equation}
hence
\begin{equation}\label{pantusa11}
    \begin{split}
       &\|g_\varepsilon^q\|_{W^{1,2}(B_{h'})} \lesssim q\Big(\int_{B_{2R}}(\varepsilon+|\nabla u_\varepsilon|^2)^\frac{\zeta(m-2+k)(q-\hat s)}{2} |D^{\boldsymbol\nu}u_{\varepsilon}|^{\zeta(q-\hat s)}(\psi^\zeta + |\nabla \psi|^\zeta)\,dx\Big)^{\frac 1\zeta}\\
       &\quad+q(m-2+k)\left(\int_{B_{h'}}(\varepsilon+|\nabla u_\varepsilon|^2)^{(m-2+k)q-1}|D^{\boldsymbol\nu}u_{\varepsilon}|^{2q}|D^2u_\varepsilon|^2\psi^2\,dx\right)^{\frac 12}+\|g_\varepsilon^q\|_{L^2(B_{h'})}.
    \end{split}
\end{equation}
Reasoning as done in Lemma \ref{pantusa1}, we can estimate the second and the third summands in r.h.s. of \eqref{pantusa11} by the first one. Therefore by means of the Sobolev inequality we infer
\begin{equation}\label{stimona1}
    \|g_\varepsilon^q\|_{L^{2^*}(B_{h'})}\lesssim C_Sq\Big(\int_{B_{2R}}(\varepsilon+|\nabla u_\varepsilon|^2)^\frac{\zeta(m-2+k)(q-\hat s)}{2} |D^{\boldsymbol\nu}u_{\varepsilon}|^{\zeta(q-\hat s)}(\psi^\zeta + |\nabla \psi|^\zeta)\,dx\Big)^{\frac 1\zeta},
\end{equation}
where $C_S$ denotes the Sobolev constant.
 Recalling that $|\nabla \psi|\le C/(h-h')$, for some positive constant $C$, $\psi\leq 1$, we deduce \eqref{cane477}.\\
We conclude by estimating the term $\int_{B_{2R}} |D^{\boldsymbol{\nu}} (f) \varphi | \,dx$.
\begin{equation}\label{terms15}
    \begin{split}
\int_{B_{2R}} |D^{\boldsymbol{\nu}} (f) \varphi | \,dx&=\int_{B_{2R}} D^{\boldsymbol{\nu}} (f) (\varepsilon+|\nabla u_\varepsilon|^2)^{\frac{2(m-2+k)q+2-p}{2}}|D^{\boldsymbol\nu}u_{\varepsilon}|^{2q-1}\psi^2  \,dx
\\&=\int_{B_{h}}  (\varepsilon+|\nabla u_\varepsilon|^2)^{(m-2+k)q-|\boldsymbol{\nu}|+1-(|\boldsymbol{\nu}|-2)(\hat s-1)}|D^{\boldsymbol\nu}u_{\varepsilon}|^{2q-2\hat s}\\&\qquad\qquad\times   (\varepsilon+|\nabla u_\varepsilon|^2)^{\frac{|\boldsymbol{\nu}|+2-p}{2}}|D^{\boldsymbol{\nu}}(f)|\\&\qquad\qquad\times (\varepsilon+|\nabla u_\varepsilon|^2)^{{\frac{(|\boldsymbol{\nu}|-2)(2\hat s-1)}{2}}}|D^{\boldsymbol\nu}u_{\varepsilon}|^{2\hat{s}-1}\psi^2\,dx.
    \end{split}
\end{equation}
Note that $(m-2+k)q-|\boldsymbol{\nu}|+1-(|\boldsymbol{\nu}|-2)(\hat s-1)=(m-2+k)(q-\hat s)$.
Since $|\boldsymbol{\nu}|+2-p>0$ and since $\nabla u_{\varepsilon}$ is uniformly bounded in $L^{\infty}$ we infer that $(\varepsilon+|\nabla u_\varepsilon|^2)^{|\boldsymbol{\nu}|+2-p}$ is uniformly bounded in $L^{\infty}$.
From the latter facts, by using Holder's inequalities twice, first with exponents $(l,l/(l-1))$, and  then with exponents $\left(\frac{\zeta(l-1)}{2l},\frac{\zeta(l-1)}{l(\zeta-2)-\zeta}\right)$, we obtain
\begin{equation*}
    \begin{split}
    &\int_{B_{2R}} |D^{\boldsymbol{\nu}} (f) \varphi | \,dx\\
    &\qquad\leq C(p,n,R,\|f\|_{L^s(B_{2R})})
        \Big(\int_{B_h}|D^{\boldsymbol{\nu}}(f)|^l\,dx\Big)^{\frac{1}{l}}
        \\&\qquad\qquad\times\Big(\int_{B_{h}}  (\varepsilon+|\nabla u_\varepsilon|^2)^{(m-2+k)(q-\hat s)\frac{l}{l-1}}|D^{\boldsymbol\nu}u_{\varepsilon}|^{(2q-2\hat s)\frac{l}{l-1}}\\&\quad\qquad\qquad\qquad\times (\varepsilon+|\nabla u_\varepsilon|^2)^{{\frac{(|\boldsymbol{\nu}|-2)(2\hat s-1)}{2}}\frac{l-1}{l}}|D^{\boldsymbol\nu}u_{\varepsilon}|^{(2\hat{s}-1)\frac{l-1}{l}}\psi^{\frac{2l}{l-1}}\,dx\Big)^{\frac {l-1}{l}}
        \\&\qquad\leq C(p,n,R,\|f\|_{W^{m,l}(B_{2R})})\Big(\int_{B_{h}}  (\varepsilon+|\nabla u_\varepsilon|^2)^{\frac{\zeta (m-2+k)(q-\hat s)}{2}}|D^{\boldsymbol\nu}u_{\varepsilon}|^{\zeta(q-\hat s)} \psi^\zeta\,dx\Big)^{\frac 1\zeta}
        \\&\qquad\qquad\times \Big(\int_{B_{h}}(\varepsilon+|\nabla u_\varepsilon|^2)^{{\frac{(|\boldsymbol{\nu}|-2)(2\hat s-1)}{2}\frac{l\zeta}{l(\zeta-2)-\zeta}}}|D^{\boldsymbol\nu}u_{\varepsilon}|^{(2\hat{s}-1)\frac{l\zeta}{l(\zeta-2)-\zeta}}\,dx\Big)^{{\frac {l(\zeta-2)-\zeta}{l\zeta}}},
    \end{split}
\end{equation*}
which is the claimed bound upon using  Lemma \ref{sisso}. This concludes the proof.
\end{proof}

\begin{rem}\label{pluto}
The case $n=2$ can be treated in a completely analogous way. The only difference lies in the use of the Sobolev inequality to estimate from below the left-hand side of \eqref{pantusa11}. In this setting, one can exploit the embedding $W^{1,2}_{\mathrm{loc}}(\Omega)\hookrightarrow L^{r}_{\mathrm{loc}}(\Omega),
\ r>\frac{2l}{l-1}.$
The proof then proceeds by choosing the exponents such that $2<\frac{2l}{l-1}<\nu<r,$
which allows one to conclude as before. In particular, estimate \eqref{cane477} is recovered with the substitution $2^*\rightsquigarrow r$.
\end{rem}

\begin{lem}\label{Moserstorta}
 Let $\Omega\subset\R^n$ be a domain, with $n\ge 2$. Fix $0<R'<R$. Under the hypotheses of Proposition \ref{dinodino}, for any $R'< R$ there exists a positive constant $\mathcal{C}$ depending on $m,k,n,\zeta,l,R,R',p,\| \nabla u\|_{L^{\infty}(B_{2R})}$ and $\|f\|_{W^{m,l}( B_{2R})}$ such that
 \begin{equation}\label{frtm1}
\left\|(\varepsilon+|\nabla u_\varepsilon|^2)^{\frac{(m-2+k)}{2}}|D^{\boldsymbol{\nu}}u_\varepsilon|\right\|_{L^{\infty}(B_{R'})}\le \mathcal{C}.
\end{equation}
\end{lem}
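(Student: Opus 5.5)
We run a Moser iteration on the reverse Sobolev inequality \eqref{cane477} of Proposition \ref{dinodino}. The iteration is started from a uniform $L^\zeta$-type bound supplied by Lemma \ref{sisso}. Set $\chi:=2^*/\zeta>1$, which is admissible because $\zeta<2^*$. Rewrite \eqref{cane477} as
\[
\|g_\varepsilon\|_{L^{2^*q}(B_{h'})}\le \Big(\frac{\mathcal C q}{h-h'}\Big)^{1/q}\|g_\varepsilon\|_{L^{\zeta(q-\hat s)}(B_h)}^{(q-\hat s)/q}.
\]
Choose a sequence of exponents $q_i$ with $\zeta(q_{i+1}-\hat s)=2^*q_i$, that is, $q_{i+1}=\chi q_i+\hat s$, and start from $q_0=\hat s$ or any fixed $q_0\ge \hat s$. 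Then $q_i\sim \chi^i$ grows geometrically. Take radii $h_i=R'+(R-R')2^{-i}$, so that $h_i-h_{i+1}=(R-R')2^{-i-1}$.

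Applying the inequality with $q=q_{i+1}$, $h=h_i$, $h'=h_{i+1}$ gives
\[
\|g_\varepsilon\|_{L^{2^*q_{i+1}}(B_{h_{i+1}})}\le \big(C\,\chi^{i}2^{i}\big)^{1/q_{i+1}}\|g_\varepsilon\|_{L^{2^*q_i}(B_{h_i})}^{\,1-\hat s/q_{i+1}}.
\]
The presence of $\hat s$ in the exponent is the nonstandard feature: the inequality is not homogeneous. To handle it, set $Y_i:=\max\{1,\|g_\varepsilon\|_{L^{2^*q_i}(B_{h_i})}\}$. Since the exponent $1-\hat s/q_{i+1}$ is at most $1$, we get $Y_{i+1}\le (C\chi^i2^i)^{1/q_{i+1}}Y_i$. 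Iterating yields
\[
Y_N\le \prod_i (C(2\chi)^i)^{1/q_{i+1}}\,Y_0.
\]
The product converges because $\sum_i i/q_{i+1}<\infty$ when $q_i\gtrsim\chi^i$. This is exactly the Iandoli--Vuono scheme \cite{IV}, which we may use essentially as a black box.

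Letting $N\to\infty$ gives $\|g_\varepsilon\|_{L^\infty(B_{R'})}\le C\,Y_0$, using $\|\cdot\|_{L^\infty}=\lim\|\cdot\|_{L^{r}}$ on a bounded set. For the starting term we need a uniform bound on $\|g_\varepsilon\|_{L^{2^*q_0}(B_R)}$, or equivalently on $\|g_\varepsilon\|_{L^{\zeta(q_1-\hat s)}}$. Since $g_\varepsilon=(\varepsilon+|\nabla u_\varepsilon|^2)^{(m-2+k)/2}|D^{\boldsymbol\nu}u_\varepsilon|$ with $|\boldsymbol\nu|=m$, we write $D^{\boldsymbol\nu}u_\varepsilon=D^{\boldsymbol\nu'}(D^2u_\varepsilon)$ with $|\boldsymbol\nu'|=m-2$. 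Then $g_\varepsilon\le C(\|\nabla u\|_\infty)\,(\varepsilon+|\nabla u_\varepsilon|^2)^{(m-2)/2}|D^{\boldsymbol\nu'}(D^2u_\varepsilon)|$, because $k>0$ and $\nabla u_\varepsilon$ is uniformly bounded. Lemma \ref{sisso}, applied with $m$ in place of $m$ and exponent $q=2^*q_0$, now gives a uniform bound, provided $|p-2|$ is smaller than the constant of Lemma \ref{sisso} for that exponent. The required regularity of $f$, namely $W^{m,l}$ and hence the $S_{loc}$-type hypotheses in the needed range, comes from the standing assumptions of Proposition \ref{dinodino}. All constants are independent of $\varepsilon$, which gives \eqref{frtm1}.

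The main obstacle is the bookkeeping of the inhomogeneous exponent loss $\hat s$. We must check that the recursion $q_{i+1}=\chi q_i+\hat s$ is compatible with the constraint $q\ge\hat s$ of Proposition \ref{dinodino}, and that the constant $\mathcal C$ in \eqref{cane477} does not depend on $q$ beyond the explicit factor $q$. If Lemma \ref{pantusa1} hides a $q$-dependence in $\mathcal C$, for instance through Lemma \ref{sisso} at exponents growing with $q$, the product may fail to converge. In that case I would first try to show that the Hölder exponents used there can be fixed independently of $q$, since the $q$-dependent powers sit only on $g_\varepsilon$. The smallness $|p-2|<\mathfrak C$ must also be fixed once, independently of the iteration step.
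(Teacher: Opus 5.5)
Your proposal is correct and takes the same route as the paper, which proves this lemma only by citing the Moser iteration of \cite[Proposition 3.4]{IV} applied to \eqref{cane477}. You make that iteration explicit: the recursion $q_{i+1}=\chi q_i+\hat s$, the $\max\{1,\cdot\}$ device to absorb the exponent loss $\hat s$, and the starting bound from Lemma \ref{sisso}. The $q$-independence of $\mathcal C$ that you worry about is part of the statement of Proposition \ref{dinodino}. Two cosmetic fixes are needed: start the radii at some $h_0<R$ rather than $h_0=R$, since \eqref{cane477} is stated only for $h<R$; and for $n=2$ replace $2^*$ by the exponent $r$ of Remark \ref{pluto}.
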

\begin{proof}
    See \cite[Proposition 3.4]{IV} and the subsequent remark.
\end{proof}
We are now in position to prove Theorem \ref{INFINITO}.
\begin{proof}[Proof of Theorem \ref{INFINITO}]
Let us begin with the case $n=3$. Fix $x_0\in\Omega$ and let $R>0$ such that $B_{2R}:=B_{2R}(x_0)\subset\subset\Omega $ and let $R'<R$. Let $u_\varepsilon$ be a solution of the problem \eqref{eq:problregol}. We claim that
\begin{equation}\label{Final_conv254}
    \left\|(\varepsilon+|\nabla u_\varepsilon|^2)^\frac{k}{2}D^{\boldsymbol{\alpha}} \left((\varepsilon+|\nabla u_\varepsilon|^2)^\frac{m-2}{2}\nabla u_\varepsilon\right) \right\|_{L^\infty(B_{R'})} \leq \mathcal{C},
\end{equation}
with $\mathcal{C}$ independent of $\varepsilon.$ To prove the claim, we use an argument developed in the proof of Theorem \ref{teoprinc}.  Let $\boldsymbol{\alpha}$ be a multi-index of order $m-1$. Exploiting the Leibniz rule, we get:
\begin{equation}\label{L_conv250}
\begin{split}
    &(\varepsilon+|\nabla u_\varepsilon|^2)^\frac{k}{2}D^{\boldsymbol{\alpha}} \left((\varepsilon+|\nabla u_\varepsilon|^2)^\frac{m-2}{2}\nabla u_\varepsilon\right)\\
    &\qquad= (\varepsilon+|\nabla u_\varepsilon|^2)^\frac{k}{2}\sum_{\boldsymbol{\gamma}\leq \boldsymbol{\alpha}} \tbinom{\boldsymbol{\alpha}}{\boldsymbol{\gamma}} D^{\boldsymbol{\gamma}}\left((\varepsilon+|\nabla u_\varepsilon|^2)^\frac{m-2}{2}\right) D^{\boldsymbol{\alpha}-\boldsymbol{\gamma}}\left(\nabla u_\varepsilon\right)\\
    &\qquad=(\varepsilon+|\nabla u_\varepsilon|^2)^\frac{k}{2}\left((\varepsilon+|\nabla u_\varepsilon|^2)^\frac{m-2}{2}D^{\boldsymbol{\alpha}}(\nabla u_\varepsilon) + D^{\boldsymbol{\alpha}}\left((\varepsilon+|\nabla u_\varepsilon|^2)^\frac{m-2}{2}\right) \nabla u_\varepsilon\right)\\
    &\qquad\qquad + (\varepsilon+|\nabla u_\varepsilon|^2)^\frac{k}{2}\sum_{0<\boldsymbol{\gamma}< \boldsymbol{\alpha}} \tbinom{\boldsymbol{\alpha}}{\boldsymbol{\gamma}} D^{\boldsymbol{\gamma}}\left((\varepsilon+|\nabla u_\varepsilon|^2)^\frac{m-2}{2}\right) D^{\boldsymbol{\alpha}-\boldsymbol{\gamma}}\left(\nabla u_\varepsilon\right)\\
    &\qquad = : F_1+F_2+F_3,
\end{split}
\end{equation}
where the three cases
$\boldsymbol{\gamma} = \boldsymbol{0}$,
$\boldsymbol{\gamma} \neq \boldsymbol{\alpha}$,
and
$\boldsymbol{\gamma} = \boldsymbol{\alpha}$
are treated separately.\\
Now we prove that there exists a constant $\mathfrak{C}:=\mathfrak{C}(k,l,n,m)$ small enough such that if $|p-2|<\mathfrak{C}$, then the terms $F_1$, $F_2$ and $F_3$ are bounded in $L^{\infty}(B_{R'})$.\\
By Lemma \ref{Moserstorta}, we can estimate $F_1$ in the following way
\begin{equation}\label{F_1stima}
\big\|F_1\big\|_{L^{\infty}(B_{R'})} = \Big\|(\varepsilon+|\nabla u_\varepsilon|^2)^\frac{m-2+k}{2}D^{\boldsymbol{\alpha}}(\nabla u_\varepsilon)\Big\|_{L^{\infty}(B_{R'})} \leq C,
\end{equation}
where $C$ is a positive constant independent of $\varepsilon$. Now we apply the Faà di Bruno formula to estimate the term $F_2$ in the equality \eqref{L_conv250}. Therefore, we obtain
\begin{equation}\label{F_2stima}
\begin{split}
    F_2&=(\varepsilon+|\nabla u_\varepsilon|^2)^\frac{k}{2}D^{\boldsymbol{\alpha}}\left((\varepsilon+|\nabla u_\varepsilon|^2)^\frac{m-2}{2}\right)\nabla u_\varepsilon\\
    &=\sum_{r=1}^{|\boldsymbol{\alpha}|} c_1 (\varepsilon+|\nabla u_\varepsilon|^2)^{\frac{m-2+k}{2} -r} \sum_{s=1}^{|\boldsymbol{\alpha}|} \sum_{p_s(\boldsymbol{\alpha},{r})} \boldsymbol{\alpha}! \prod_{j=1}^s \frac{\left(D^{\boldsymbol{l}_j}(|\nabla u_\varepsilon|^2)\right)^{k_j}}{k_j!(\boldsymbol{l}_j!)^{k_j}}\nabla u_\varepsilon
\end{split}
\end{equation}
where $c_1:= \prod_{h=1}^r \frac{m-2r}{2}$ and
\begin{equation*}
\begin{split}
    p_s(\boldsymbol{\alpha},{r})= \Big\{({k}_1,...,{k}_s;\boldsymbol{l}_1,...,\boldsymbol{l}_s):  0\prec \boldsymbol{l}_1\prec\cdots \prec\boldsymbol{l}_s;k_i>0;\sum_{i=1}^s k_i ={r}, \sum_{i=1}^{s} k_i \boldsymbol{l}_i = \boldsymbol{\alpha}\Big\}.
\end{split}
\end{equation*}
By treating separately the case $r = s = 1$ and the remaining ones,
\eqref{F_2stima} becomes
\begin{equation}\label{F_2_stima2}
\begin{split}
    &(\varepsilon+|\nabla u_\varepsilon|^2)^\frac{k}{2}D^{\boldsymbol{\alpha}}\left((\varepsilon+|\nabla u_\varepsilon|^2)^\frac{m-2}{2}\right)\nabla u_\varepsilon\\
    &\qquad= \sum_{r=1}^{|\boldsymbol{\alpha}|} c_1 (\varepsilon+|\nabla u_\varepsilon|^2)^{\frac{m-2+k}{2} -r} \sum_{p_1(\boldsymbol{\alpha},{r})} \boldsymbol{\alpha}! \frac{\left(D^{\boldsymbol{l}_1}(|\nabla u_\varepsilon|^2)\right)^{k_1}}{k_1!(\boldsymbol{l}_1!)^{k_1}}\nabla u_\varepsilon\\
    &\qquad\quad + \sum_{r=1}^{|\boldsymbol{\alpha}|} c_1 (\varepsilon+|\nabla u_\varepsilon|^2)^{\frac{m-2+k}{2} -r} \sum_{s=2}^{|\boldsymbol{\alpha}|} \sum_{p_s(\boldsymbol{\alpha},{r})} \boldsymbol{\alpha}! \prod_{j=1}^s \frac{\left(D^{\boldsymbol{l}_j}(|\nabla u_\varepsilon|^2)\right)^{k_j}}{k_j!(\boldsymbol{l}_j!)^{k_j}}\nabla u_\varepsilon\\
    &\qquad = \frac{m-2}{2} (\varepsilon+|\nabla u_\varepsilon|^2)^{\frac{m-4+k}{2}}D^{\boldsymbol{\alpha}}(|\nabla u_\varepsilon|^2)\nabla u_\varepsilon \\
    &\qquad\quad+ \sum_{r=2}^{|\boldsymbol{\alpha}|} c_1 (\varepsilon+|\nabla u_\varepsilon|^2)^{\frac{m-2+k}{2} -r} \sum_{p_1(\boldsymbol{\alpha},{r})} \boldsymbol{\alpha}! \frac{\left(D^{\boldsymbol{l}_1}(|\nabla u_\varepsilon|^2)\right)^{k_1}}{k_1!(\boldsymbol{l}_1!)^{k_1}}\nabla u_\varepsilon\\
    &\qquad\quad + \sum_{r=1}^{|\boldsymbol{\alpha}|} c_1 (\varepsilon+|\nabla u_\varepsilon|^2)^{\frac{m-2+k}{2} -r} \sum_{s=2}^{|\boldsymbol{\alpha}|} \sum_{p_s(\boldsymbol{\alpha},{r})} \boldsymbol{\alpha}! \prod_{j=1}^s \frac{\left(D^{\boldsymbol{l}_j}(|\nabla u_\varepsilon|^2)\right)^{k_j}}{k_j!(\boldsymbol{l}_j!)^{k_j}}\nabla u_\varepsilon,
\end{split}
\end{equation}
where
   $ p_1(\boldsymbol{\alpha},{r})= \Big\{({k}_1;\boldsymbol{l}_1):  0\prec \boldsymbol{l}_1;\quad k_1 ={r},\quad  k_1 \boldsymbol{l}_1 = \boldsymbol{\alpha}\Big\}.$
By the Leibniz rule we have the following
\begin{equation*}
    D^{\boldsymbol{\alpha}}(|\nabla u_\varepsilon|^2)= 2 \left(\nabla u_\varepsilon,D^{\boldsymbol{\alpha}}(\nabla u_\varepsilon)\right) +\sum_{0<\boldsymbol{\mu}< \boldsymbol{\alpha}} \tbinom{\boldsymbol{\alpha}}{\boldsymbol{\mu}} \left(D^{\boldsymbol{\mu}}(\nabla u_\varepsilon),D^{\boldsymbol{\alpha}-\boldsymbol{\mu}}(\nabla u_\varepsilon)\right),
\end{equation*}
which, used in \eqref{F_2_stima2}, gives
\begin{equation}\label{F_2_sl}
\begin{split}
    F_2&=(\varepsilon+|\nabla u_\varepsilon|^2)^\frac{k}{2}D^{\boldsymbol{\alpha}}\left((\varepsilon+|\nabla u_\varepsilon|^2)^\frac{m-2}{2}\right)\nabla u_\varepsilon\\
    &= (m-2)(\varepsilon+|\nabla u_\varepsilon|^2)^{\frac{m-4+k}{2}}\left(D^{\boldsymbol{\alpha}}(\nabla u_\varepsilon),\nabla u_\varepsilon\right)\nabla u_\varepsilon\\
    &\quad+ \frac{m-2}{2} (\varepsilon+|\nabla u_\varepsilon|^2)^{\frac{m-4+k}{2}}\sum_{0<\boldsymbol{\mu}< \boldsymbol{\alpha}} \tbinom{\boldsymbol{\alpha}}{\boldsymbol{\mu}} \left(D^{\boldsymbol{\mu}}(\nabla u_\varepsilon),D^{\boldsymbol{\alpha}-\boldsymbol{\mu}}(\nabla u_\varepsilon)\right)\nabla u_\varepsilon\\
    &\quad+ \sum_{r=2}^{|\boldsymbol{\alpha}|} c_1 (\varepsilon+|\nabla u_\varepsilon|^2)^{\frac{m-2+k}{2} -r} \sum_{p_1(\boldsymbol{\alpha},{r})} \boldsymbol{\alpha}! \frac{\left(D^{\boldsymbol{l}_1}(|\nabla u_\varepsilon|^2)\right)^{k_1}}{k_1!(\boldsymbol{l}_1!)^{k_1}}\nabla u_\varepsilon\\
    &\quad + \sum_{r=1}^{|\boldsymbol{\alpha}|} c_1 (\varepsilon+|\nabla u_\varepsilon|^2)^{\frac{m-2+k}{2} -r} \sum_{s=2}^{|\boldsymbol{\alpha}|} \sum_{p_s(\boldsymbol{\alpha},{r})} \boldsymbol{\alpha}! \prod_{j=1}^s \frac{\left(D^{\boldsymbol{l}_j}(|\nabla u_\varepsilon|^2)\right)^{k_j}}{k_j!(\boldsymbol{l}_j!)^{k_j}}\nabla u_\varepsilon
    \\&=:J_1+\cdots +J_4.
\end{split}
\end{equation}
By \eqref{frtm1}, we deduce that the term $J_1$ is uniformly bounded in $L^{\infty}$. For the term $J_2$, since $m-3=(|\boldsymbol{\mu}|-1)+(|\boldsymbol{\alpha}|-|\boldsymbol{\mu}|-1)$, we deduce 

\begin{equation}\label{J_2stima}
    \begin{split}
        |J_2|&\le C(m)\sum_{0<\boldsymbol{\mu}< \boldsymbol{\alpha}}  (\varepsilon+|\nabla u_\varepsilon|^2)^{\frac{|\boldsymbol{\mu}|-1+k/2}{2}}D^{\boldsymbol{\mu}}(\nabla u_\varepsilon) (\varepsilon+|\nabla u_\varepsilon|^2)^{\frac{|\boldsymbol{\alpha}|-|\boldsymbol{\mu}|-1+k/2}{2}}D^{\boldsymbol{\alpha}-\boldsymbol{\mu}}(\nabla u_\varepsilon),
    \end{split}
\end{equation}
with $C(m)$ positive constant. By \eqref{frtm1} we get the uniform boundedness in $L^{\infty}$ for the term $J_2$.
Now we estimate the term $J_3$. Using the Leibniz rule for the term $D^{\boldsymbol{l}_1}(|\nabla u_\varepsilon|^2)$, and since $m-1-2r=k_1(|\boldsymbol{\mu}|-1)+k_1(|\boldsymbol{l}_1|-|\boldsymbol{\mu}|-1)$,  by \eqref{frtm1}, we deduce

\begin{equation}\label{J_3stima}
    \begin{split}
        J_3&= \sum_{r=2}^{|\boldsymbol{\alpha}|} c_1 (\varepsilon+|\nabla u_\varepsilon|^2)^{\frac{m-2+k}{2} -r} \sum_{p_1(\boldsymbol{\alpha},{r})} \boldsymbol{\alpha}! \frac{\left(D^{\boldsymbol{l}_1}(|\nabla u_\varepsilon|^2)\right)^{k_1}}{k_1!(\boldsymbol{l}_1!)^{k_1}}\nabla u_\varepsilon
        \\&\le C(m)(\varepsilon+|\nabla u_\varepsilon|^2)^{\frac{k}{2}}\sum_{r=2}^{|\boldsymbol{\alpha}|}  \sum_{p_1(\boldsymbol{\alpha},{r})} 
    \sum_{\boldsymbol{\mu}\leq \boldsymbol{l}_1}  (\varepsilon+|\nabla u_\varepsilon|^2)^{\frac{k_1(|\boldsymbol{\mu}|-1)}{2}}\Big|D^{\boldsymbol{\mu}}(\nabla u_\varepsilon)\Big|^{k_1}\times
    \\&\qquad\qquad\qquad\qquad \times(\varepsilon+|\nabla u_\varepsilon|^2)^{\frac{k_1(|\boldsymbol{l}_1|-|\boldsymbol{\mu}|-1)}{2}}\Big|D^{\boldsymbol{l}_1-\boldsymbol{\mu}}(\nabla u_\varepsilon)\Big|^{k_1}
   \\& = C(m)\sum_{r=2}^{|\boldsymbol{\alpha}|}  \sum_{p_1(\boldsymbol{\alpha},{r})} 
    \sum_{\boldsymbol{\mu}\leq \boldsymbol{l}_1}  (\varepsilon+|\nabla u_\varepsilon|^2)^{\frac{k_1(|\boldsymbol{\mu}|-1)+k/2}{2}}\Big|D^{\boldsymbol{\mu}}(\nabla u_\varepsilon)\Big|^{k_1}\times
    \\&\qquad\qquad\qquad\qquad \times(\varepsilon+|\nabla u_\varepsilon|^2)^{\frac{k_1(|\boldsymbol{l}_1|-|\boldsymbol{\mu}|-1)+k/2}{2}}\Big|D^{\boldsymbol{l}_1-\boldsymbol{\mu}}(\nabla u_\varepsilon)\Big|^{k_1}\le \mathcal{C},
    \end{split}
\end{equation}
where $\mathcal{C}$ is a constant not depending on $\varepsilon$.
 For the last term $J_4$, in an analogous way, we obtain
 
\begin{equation}\label{Stima J_4}
    \begin{split}
        J_4&=\sum_{r=1}^{|\boldsymbol{\alpha}|} c_1 (\varepsilon+|\nabla u_\varepsilon|^2)^{\frac{m-2+k}{2} -r} \sum_{s=2}^{|\boldsymbol{\alpha}|} \sum_{p_s(\boldsymbol{\alpha},{r})} \boldsymbol{\alpha}! \prod_{j=1}^s \frac{\left(D^{\boldsymbol{l}_j}(|\nabla u_\varepsilon|^2)\right)^{k_j}}{k_j!(\boldsymbol{l}_j!)^{k_j}}\nabla u_\varepsilon
        \\&\le C(m)\sum_{r=1}^{|\boldsymbol{\alpha}|}  \sum_{s=2}^{|\boldsymbol{\alpha}|} \sum_{p_s(\boldsymbol{\alpha},{r})} \prod_{j=1}^s  \sum_{\boldsymbol{\mu}\leq \boldsymbol{l}_j}  (\varepsilon+|\nabla u_\varepsilon|^2)^{\frac{k_j(|\boldsymbol{\mu}|-1)+k/s}{2}}\Big|D^{\boldsymbol{\mu}}(\nabla u_\varepsilon)\Big|^{k_j}\\
    &\qquad\qquad\qquad \times(\varepsilon+|\nabla u_\varepsilon|^2)^{\frac{k_j(|\boldsymbol{l}_j|-|\boldsymbol{\mu}|-1)+k/s}{2}}\Big|D^{\boldsymbol{l}_j-\boldsymbol{\mu}}(\nabla u_\varepsilon)\Big|^{k_j}\le \mathcal{C}.  
    \end{split}
\end{equation}
Collecting the estimates \eqref{J_2stima}, \eqref{J_3stima}, and \eqref{Stima J_4}, we conclude that the term $F_2$ is uniformly bounded in $L^\infty$.
In an analogous way, we can estimate the $L^\infty$-norm of $F_3$. Summing up, we claim \eqref{Final_conv254}.\\
For $\boldsymbol{\alpha}$ multi-index of order $m-1$, we aim to prove that
\[
|\nabla u|^kD^{\boldsymbol{\alpha}} \left(|\nabla u|^{m-2}\nabla u\right) \in L^\infty\bigl(B_{R'}(x_0)\bigr).
\]
Let $u_\varepsilon$ be the solution to \eqref{eq:problregol}. By estimate~\eqref{Final_conv254}, it is sufficient to show that
\[
(\varepsilon+|\nabla u_\varepsilon|^2)^\frac{k}{2}D^{\boldsymbol{\alpha}} \left((\varepsilon+|\nabla u_\varepsilon|^2)^\frac{m-2}{2}\nabla u_\varepsilon\right)
\  \  \text{converges a.e. to} \quad
|\nabla u|^kD^{\boldsymbol{\alpha}} \left(|\nabla u|^{m-2}\nabla u\right).
\]
For the moment, we assume that
$    f \in C^m(\Omega).$
Let $K \subset\subset B_{R'}$ be a compact set. We recall (see \cite{Ant1,Anto2,DB,L2}) that
\begin{equation}\label{evvai655}
u_\varepsilon \to u
\quad \text{in } C^{1,\beta}(K).
\end{equation}
Moreover, by Schauder estimates (see \cite{GT}), for any compact set $\tilde K \subset\subset B_{R'} \setminus Z_u,$
there exists a constant $C>0$, independent of $\varepsilon$, such that
\[
\|u_\varepsilon\|_{C^{m,\beta'}(\tilde K)} \le C.
\]
By the Arzelà-Ascoli theorem, up to a subsequence, the convergence in \eqref{evvai655} improves to
\[
u_\varepsilon \to u \quad \text{in } C^m(\tilde K).
\]
Therefore, using \eqref{Final_conv254} and letting $\varepsilon \to 0$, we obtain
\begin{equation}\label{frtm2}
\bigl\| |\nabla u|^kD^{\boldsymbol{\alpha}} \left(|\nabla u|^{m-2}\nabla u\right) \bigr\|_{L^\infty(B_{R'} \setminus Z_u)} \le \mathcal{C}.
\end{equation}
Furthermore, by Theorem~\ref{teoprinc}, for any $i=1,...,n$, we have $|\nabla u|^{m-2}\partial_{x_i}u \in W^{m-1,\hat q}(B_{2R})$ for some $\hat q > 1$. Hence, after 
$m-1$ applications of Stampacchia’s theorem (see \cite[Theorem~6.19]{S}), it follows that
\[
D^{\boldsymbol{\alpha}}\left(|\nabla u|^{m-2}\partial_{x_i}u\right) = 0 \quad \text{a.e. on the set } \{|\nabla u| = 0\}.
\]
This implies that
\begin{equation}\label{frtm3}
\bigl\| |\nabla u|^kD^{\boldsymbol{\alpha}} \left(|\nabla u|^{m-2}\nabla u\right) \bigr\|_{L^\infty(B_{R'})}
\le \mathcal{C},
\end{equation}
where $\mathcal{C}=\mathcal{C}\bigl(m,k,n,l,\zeta,m,R,R',p,
\|\nabla u\|_{L^\infty(B_{2R})},
\|f\|_{W^{m,l}(B_{2R})}\bigr)$.\\
The last step consists of removing the assumption $f\in C^m$. Let $f \in W^{m,l}_{loc}(\Omega) $. By standard density arguments we can infer that there exists a sequence $\{{f}_s\} \subset C^\infty(\Omega)$ such that
\begin{equation}\label{conv_f}
    {f}_s \rightarrow {f} \quad \text{in } W^{m,l}_{loc}(\Omega).
\end{equation}
To proceed, we consider a sequence $\{{u}_s\}$ of weak solutions to the following problem
\begin{equation}
\label{system_step3}
\begin{cases}
-\operatorname{ div}({|\nabla{ u_s}|}^{p-2}\nabla{ u_s})=   { f_s}& \mbox{in $B_{2R}$}\\
 u_s= u  & \mbox{on  $\partial B_{2R}$}.
\end{cases}
\end{equation}

\noindent By \eqref{frtm3} applied to $u_s$ we have

\begin{equation}\label{stimaimportante}
\left\||\nabla u_s|^kD^{\boldsymbol{\alpha}} \left(|\nabla u_s|^{m-2}\nabla u_s\right)\right\|_{L^{\infty}(B_{R'})}\le \mathcal{C}, 
\end{equation}

\noindent with $\mathcal{C}=\mathcal{C}(k,n,l,\zeta,m,R,R',p,\| \nabla u_s\|_{L^{\infty}(B_{2R})},\|f_s\|_{W^{m,l}( B_{2R})})$.\\
First, we note that, by \cite[Theorem 1.7]{L2}, we have 
\begin{equation}\label{uniformitàdeigradientisum}
    \|\nabla u_s\|_{L^{\infty}(B_{2R'})}\le C,
\end{equation}
where $C(p,R,n,\|f_s\|_{L^r(B_{2R})})$ is a positive constant and $r>n$.
Now, consider a compact set $K\subset \subset B_{R'}$. We recall that, 
\begin{equation}\label{evvai6558}
{u}_s\rightarrow  u \quad \text{in the norm }\|\cdot\|_{C^{1,\beta}(K)}.
\end{equation}
As above, since $f\in W^{m,l}_{loc}(\Omega)\hookrightarrow C^{m-2,\tilde{\beta}}_{loc}(\Omega)$, for some $\tilde{\beta}\in (0,1)$, by Schauder estimates we infer that  
\begin{equation}\label{convergenzaW22}
    u_s\rightarrow u \quad\  \text{in } \ C^m(\tilde K),
\end{equation}
for any compact set $\tilde K\subset\subset (B_{R'}\setminus Z_u)$.
By \eqref{conv_f}, \eqref{uniformitàdeigradientisum}, \eqref{evvai6558} and \eqref{convergenzaW22}, passing to the limit in \eqref{stimaimportante}, we deduce
\begin{equation*}\label{frtm4.0}
\left\||\nabla u|^kD^{\boldsymbol{\alpha}} \left(|\nabla u|^{m-2}\nabla u\right)\right\|_{L^{\infty}(B_{R'}\setminus Z_u)}\le \mathcal{C}, 
\end{equation*}
 with $\mathcal{C}=\mathcal{C}(k,n,l,\zeta,m,R,R',p,\| \nabla u\|_{L^{\infty}(B_{2R})},\|f\|_{W^{m,l}( B_{2R})})$.\\ By applying Stampacchia's theorem and arguing as in the case $f \in C^m$, we obtain the desired conclusion.\\
We conclude the paper by proving that $|\nabla u|^kD^{\boldsymbol{\alpha}-1} \left(|\nabla u|^{m-2}D^2 u\right)\in L^\infty{(B_{R'})}.$ We assume $f\in C^m(\Omega).$ An argument similar to the one employed in the proof of inequality \eqref{Final_conv254} yields
\begin{equation}\label{cosefinali}
    \left\|(\varepsilon+|\nabla u_\varepsilon|^2)^\frac{k}{2}D^{\boldsymbol{\alpha}-1} \left((\varepsilon+|\nabla u_\varepsilon|^2)^\frac{m-2}{2}D^2u_\varepsilon\right) \right\|_{L^\infty(B_{R'})} \leq \mathcal{C},
\end{equation}
with $\mathcal{C}$ independent of $\varepsilon$. The desired conclusion follows by repeating the arguments developed for the stress field $|\nabla u|^kD^{\boldsymbol{\alpha}} \left(|\nabla u|^{m-2}\nabla u\right)$. The case of a more general function $f$ can be treated analogously, and therefore the details are omitted.\\The case $n=2$ follows by Remark \ref{pluto}, reasoning as above. This concludes the proof.
\end{proof}
\section*{Acknowledgements}
The authors are supported by PRIN PNRR P2022YFAJH \emph{Linear and Nonlinear PDEs: New directions and applications.}  Felice Iandoli has been partially supported by \emph{INdAM-GNAMPA Project Stable and unstable phenomena in propagation of Waves in dispersive media} E5324001950001. D. Vuono has been partially supported by \emph{INdAM-GNAMPA Project Regularity and qualitative aspects of nonlinear PDEs via variational and non-variational approaches} E5324001950001.

\textbf{Declarations.} Data sharing is not applicable to this article as no datasets were generated or analyzed during the current study.
Conflicts of interest: The authors have no conflicts of interest to declare.

\end{document}